\newtheorem{thm}{Theorem}[section]
\newtheorem{lem}[thm]{Lemma}
\newtheorem{cor}[thm]{Corollary}
\newtheorem{prop}[thm]{Proposition}
\newtheorem{notation}[thm]{Notation}
\newtheorem*{question*}{Question}
\theoremstyle{definition}
\newtheorem*{example*}{Example}
\newtheorem{definition}[thm]{Definition}
\newtheorem*{notation*}{Notation}
\newcommand{\E}{\mathbb{E}}
\newcommand{\N}{\mathbb N}
\newcommand{\R}{\mathbb R}
\newcommand{\Z}{\mathbb Z}
\newcommand{\T}{\mathbb T}
\newcommand{\stair}{{\mbox{\RaisingEdge}}}
\newcommand{\vecv}{\vec v}
\newcommand{\CT}{\mathcal T}
\newcommand{\vol}{\text{vol}}
\begin{document}
\title[Alpern's Lemma for $\mathbb R^d$ actions]
{Rudolph's Two Step Coding Theorem and Alpern's Lemma for $\mathbb R^d$ actions}
\author{Bryna Kra}
\address[Kra]{Department of Mathematics, Northwestern University, Evanston, IL 60208, USA}
\email{kra@math.northwestern.edu}
\author{Anthony Quas}
\address[Quas]{Department of Mathematics and Statistics, University of
Victoria, Victoria BC, V8W 3R4, Canada} \email{aquas@uvic.ca}
\author{Ay\c se \c Sah\.in}
\address[\c Sahin]{Department of Mathematical Sciences, DePaul University, 2320 N. Kenmore Ave,
Chicago, IL 60614, USA} \email{asahin@depaul.edu}
\thanks{The first author was partially supported by NSF grant $1200971$ and the second author 
was partially supported by NSERC}

\dedicatory{Dedicated to the memory of Daniel J. Rudolph}
\maketitle

\begin{abstract}
Rudolph showed that the orbits of any measurable, measure preserving $\mathbb R^d$ action can be measurably tiled by $2^d$ rectangles and asked if this number of tiles is optimal for $d>1$. In this paper, using a tiling of $\mathbb R^d$ by {\it notched cubes}, we show that $d+1$ tiles suffice. Furthermore, using a detailed analysis of the set of invariant measures on tilings of $\mathbb R^2$ by two rectangles, we show that while for $\mathbb R^2$ actions with completely positive entropy this bound is optimal there exist mixing $\mathbb R^2$ actions whose orbits can be tiled by 2 tiles.   
 \end{abstract}

\section{Introduction}

The work in this paper completes a project started by the third author in collaboration with Daniel Rudolph, shortly before his death.  The project started with a question posed by Rudolph about representations of measurable $\mathbb R^d$ actions.  The classical representation theorem for continuous group actions is the Ambrose-Kakutani Theorem \cite{AK} which states that every free, probability measure preserving action of $\mathbb R$ on a Lebesgue space is measurably isomorphic to a flow built under a function.   In \cite{RudolphR1} Rudolph proved his Two Step Coding Theorem showing that given any irrational $\alpha>0$, there is in fact a representation where the ceiling function takes only two values $1$ and $1+\alpha$.   Rudolph proves his result by showing that 
the flow can be factored onto the translation action of $\mathbb R$ on the space of tilings of $\mathbb R$ by two tiles.   
Since a constant ceiling function results in a flow with a non-ergodic time, it is clear that two is the smallest number of tiles for which such a general statement can be true.

In \cite{RudolphRd} and \cite{R2} Rudolph generalized his one dimensional result to show that every free, measure preserving $\mathbb R^d$ action can factored onto the translation action of $\mathbb R^d$ on a space of tilings of $\mathbb R^d$ by $2^d$ rectangles.  His result shows more: in particular the base points of the tiles form a section of the $\mathbb R^d$ action and the constraints on the tilings give rise to a natural $\mathbb Z^2$ action on the return times to the base points of the tiles. Thus Rudolph's theorem gives a representation of the $\mathbb R^d$ action as a suspension flow.

The Rohlin Lemma can be viewed as the discrete analog of the Ambrose-Kakutani Theorem.  In its standard formulation it states that given a free,  measure preserving $\mathbb Z$ action on a Lebesgue probability space $X$, $N\in\mathbb N$, and $\epsilon>0$, the space $X$ can be measurably decomposed into a tower of height $N$ and a set of measure $\epsilon$ usually called the error set.  The union of the base of the tower and the error set are the analogs of the section in the continuous representation theorems, and the Rohlin Lemma can be restated as an orbit tiling result.  
Alpern \cite{Alpern} generalized this result to show that in fact the space can be decomposed into any collection of towers provided the lengths of the towers have no non-trivial common divisor.   In particular, he established the discrete analog of Rudolph's Two Step Coding Theorem  for $\mathbb Z$ actions.  

Alpern's result was generalized to $\mathbb Z^d$ actions by Prikhodko \cite{Prikhodko} and \c Sahin \cite{Ayse} for rectangular towers, but with no restriction on the number of tiles as a function of dimension.  Notably, two tiles suffice in any dimension.  Further, the proof in \cite{Ayse} is a generalization of ideas introduced in \cite{R2}, suggesting that the continuity of the group imposes some restrictions on the number of tiles necessary for a general representation theorem.  Motivated by the $\mathbb Z^d$ Alpern Theorem, Rudolph asked whether the number of tiles in his $\mathbb R^d$ result, $d>1$, was sharp.

In December, 2009  Rudolph and \c Sahin started working on this project with the aim of first exploring the case with $d=2$, and got as far as establishing the first few structural results about tilings of $\mathbb R^2$ with two rectangles (up to Lemma~\ref{lem:aba} in Section~\ref{sec:2-tilings}).   

In this paper we continue this work and develop new techniques to analyze the geometric structure of tilings of $\mathbb R^2$ with two rectangles.  This enables us to  give a complete description of these tilings, as well as the set of probability measures on the space of such tilings invariant under the standard translation action.   We show, in particular, that the standard translation action on this space as a topological dynamical system has entropy zero. 
As a consequence, the analog of Rudolph's statement in two dimensions using two rectangular 
tiles is false. 

In the positive direction we prove that any free, ergodic and measure preserving $\mathbb R^d$ action can be factored onto an $\mathbb R^d$ tiling dynamical system with $d+1$ rectangular tiles.   
The approach we use is essentially that introduced by Rudolph in \cite{R2}.  His construction of the tilings relies on the existence of a periodic tiling by a large rectangle composed of the $2^d$ rectangles of the theorem (referred to as a supertile in our paper) and the ability to move large fundamental domains of the periodic tiling to approximations of arbitrary locations in $\mathbb R^d$.  Moving the fundamental domains relies on performing a series of local perturbations in each coordinate direction and the perturbations in turn require two irrationally related dimensions of the rectangles, thus requiring $2^d$ rectangles. The new ingredient in our work is that we construct a periodic tiling by supertiles which are not rectangular.  In this tiling we are able to move fundamental domains by perturbing in only one direction, and this allows us to reduce the number of rectangular tiles required to $d+1$.  

In what follows we give more precise statements of our results and provide an outline for the paper.  

\subsection{Tiling orbits of $\mathbb R^d$ actions}
In Section~\ref{s:tilingthm} we prove an Alpern Lemma for $\mathbb R^d$ actions by providing a set of tiles with which almost every orbit of any free, ergodic, measure preserving $\mathbb R^d$ action can be tiled.  To state the theorem formally we need to establish some notation.
Given a collection $\mathcal T=\{\tau_1,\tau_2,\ldots,\tau_k\}$ of $d$-dimensional tiles, a {\em $\mathcal T$-tiling} of $\mathbb R^d$ is a covering of $\mathbb R^d$ by translates of tiles from $\mathcal T$ such that they only overlap on boundaries of the tiles.  The set of all tilings of $\mathbb R^d$ by $\mathcal T$ is denoted by $Y_{\mathcal T}$, and the standard translation action of $\mathbb R^d$ on $Y_{\CT}$ is denoted by $S=\{S_{\vec v}\}_{\vec v\in\mathbb R^d}$.   We say that a 
dynamical system $(X,\mu,\{T_{\vec v}\}_{\vec v\in\mathbb R^d}\})$ is {\em $\CT$-tileable} if there exists a factor map 
\begin{equation*}
\phi\colon(X,\mu,\{T_{\vec v}\}_{\vec v\in\mathbb R^d}\})\rightarrow (Y_{\CT},\phi(\mu),\{S_{\vec v}\}_{\vec v\in\mathbb R^d}).
\end{equation*}
 
We show:
\begin{thm}\label{thm:main}
Let $d\ge 1$.  There exists a set $\CT=\{\tau_1,\ldots,\tau_{d+1}\}$ of $d+1$ rectangular tiles in $\mathbb R^d$ such that any free, probability measure preserving $\mathbb R^d$-action $(X,\mu,\{T_{\vec v}\}_{\vec v\in\mathbb R^d})$ is $\CT$-tileable.
\end{thm}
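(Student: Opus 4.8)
The plan is to follow Rudolph's strategy from \cite{R2}, replacing his rectangular supertiles with a non-rectangular supertile (the ``notched cube'') that admits a periodic tiling of $\mathbb R^d$ in which fundamental domains can be translated to arbitrary positions by perturbing in a single coordinate direction. Concretely, I would fix a large integer $N$ and let the $d+1$ tiles be $\tau_1,\dots,\tau_d$ of side lengths $(N,\dots,N,1)$ up to permutation of which coordinate is the short one — actually, more carefully, I would take $d$ rectangles each of the form $N\times\cdots\times N\times 1$ with the unit side in each of the $d$ coordinate directions, together with one further rectangle $\tau_{d+1}$ whose dimensions involve an irrational $\alpha$ in exactly one direction. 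These tiles can be assembled into a notched-cube supertile $\sigma$ of roughly size $N^d$, and translates of $\sigma$ by a lattice $\Lambda\subset\mathbb R^d$ tile $\mathbb R^d$; the key geometric feature is that neighboring columns of supertiles in the distinguished direction can be independently shifted by amounts in $\{0,\alpha,2\alpha,\dots\}$ while still fitting together, because the notch absorbs the mismatch.

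The dynamical part of the argument then proceeds exactly as in the discrete Alpern/Rohlin machinery and its continuous analog. Given a free, ergodic, probability-measure-preserving $\mathbb R^d$ action $(X,\mu,\{T_{\vec v}\})$, I would first produce a Rohlin-type tower: a measurable set whose translates under a large box $[0,L]^d$ are disjoint and nearly fill $X$, so that $\mu$-almost every orbit carries a measurable ``grid'' structure at scale $L$. One then places a copy of the periodic notched-cube tiling on each such box. The boxes do not match up across their boundaries, so the main work is to correct the boundary mismatches: using the freeness of the action and the fact that one dimension of $\tau_{d+1}$ is irrationally related to the others, one performs a sequence of local perturbations — sliding columns of supertiles by multiples of $\alpha$ — to reconcile the tiling patterns along each coordinate hyperplane, one direction at a time. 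Because the perturbations only ever happen in the single distinguished direction, $d+1$ tiles suffice rather than $2^d$. Iterating this correction over a sequence $L\to\infty$ and taking a limit in the natural topology on $Y_{\CT}$ yields a measurable, equivariant map $\phi\colon X\to Y_{\CT}$; equivariance and measurability are checked along the way, and pushing forward $\mu$ gives the invariant measure $\phi(\mu)$.

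The main obstacle, and the place where the real content lies, is the boundary-correction step: showing that the notched-cube geometry genuinely allows one to interpolate between two arbitrary offsets of the periodic pattern across a hyperplane using only single-direction slides, and that this can be done measurably and with controlled ``cost'' so that the errors are summable and the limit exists. This requires a careful combinatorial/geometric lemma about how notched cubes fit together under column shifts — essentially the $\mathbb R^d$ analog of the one-dimensional fact that intervals of length $1$ and $1+\alpha$ can tile $\mathbb R$ with the cut points equidistributed — together with an approximation argument controlling the measure of the region where the tiling has not yet stabilized. A secondary technical point is handling the set of $\vec v$ for which $T_{\vec v}x$ lies near a tile boundary, which must have measure zero for the factor map to be well-defined; this follows from freeness plus the fact that tile boundaries form a measure-zero subset of each orbit, but it needs to be stated and used with care.
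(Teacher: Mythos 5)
Your proposal follows essentially the same route as the paper: a notched-cube supertile decomposed into $d+1$ rectangular tiles, a periodic lattice tiling of $\R^d$ by supertiles, a single-direction perturbation whose offsets are dense modulo the lattice because of an irrationality condition on $\alpha$ (the paper's Tile Intersection Property and the resulting ``uniform filling set'' corollary are precisely the combinatorial/geometric lemma you flag as the main obstacle), and an inductive sequence of nested Rohlin towers with summable errors yielding the factor map in the limit. The only substantive divergence is in the tile dimensions --- the paper's basic tiles carry $\alpha$ in several coordinates so that they decompose the unit cube, rather than being $N\times\cdots\times N\times 1$ slabs --- but this does not change the architecture of the argument.
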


The collection of tiles, called the \emph{basic tiles}, that
satisfy the conclusion of Theorem 1 is obtained by sub-dividing a
single basic tile given by a $d$-dimensional rectangle with a smaller one removed from
one corner. We call this the
supertile and it tiles $\R^d$ periodically. The supertile in
dimension two is an amalgamation of tiles used by Rudolph in
\cite{R2}. The periodic tiling using the
higher-dimensional supertile was previously constructed by Stein
\cite{Stein} and also studied by Kolountzakis
\cite{Kolountzakis}. In these papers, a tiling that is
essentially the periodic tiling that we use (up to scaling in one
coordinate) is called the ``notched cube'' tiling, but 
they do not concern themselves with the decomposition of the notched
cube into the smaller tiles. In keeping with the terminology of the earlier
references, we also refer to the periodic tiling in this work as
the notched cube tiling.

Section~\ref{s:tilingthm} contains the definitions of the basic tiles, the supertile and the notched cube tiling.   In Section \ref{subsec:mixing-notched} we study the topological mixing properties of the notched cube tiling and we prove Theorem~\ref{thm:main} in Section \ref{subsec:mainpf}.

\subsection{Structure of $2$-tiling measures}
In Section~\ref{sec:2-tilings}  we study $\mathcal T$-tilings of $\mathbb R^2$ where $\mathcal T$ consists of two rectangular tiles. The main result of the section is:
\begin{thm}\label{thm:zero}
Let $\CT=\{\tau_1,\tau_2\}$ where the tiles $\tau_i$ are rectangles whose corresponding dimensions are irrationally related.  Then $(Y_\CT,\{S_{\vec v}\}_{\vec v\in\mathbb R^2})$ has zero topological entropy.
\end{thm}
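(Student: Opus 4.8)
The plan is to show that a tiling of $\mathbb R^2$ by two rectangles $\tau_1,\tau_2$ with irrationally related side lengths is, up to a measure-zero boundary set, almost completely determined by a bounded amount of combinatorial data, so that the number of distinct patterns visible in a large box grows subexponentially. Write $\tau_i$ as $[0,a_i]\times[0,b_i]$ and assume the dimensions are paired so that $a_1/a_2$ and $b_1/b_2$ are both irrational (the precise pairing coming from the structure results proved up through Lemma~\ref{lem:aba} in Section~\ref{sec:2-tilings}). First I would recall or import from the earlier structural analysis the rigidity of how the two rectangles can meet along a common edge: because the ratios are irrational, one cannot have long ``brick-wall'' type interfaces with many independent offsets — any vertical line of the tiling that is crossed by tile boundaries forces the tiles on either side into a rigid vertical register, and similarly for horizontal lines. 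The goal of this first step is a \emph{fault-line lemma}: in any $\mathcal T$-tiling, either the plane decomposes along a bi-infinite horizontal fault line (a horizontal line met by no tile interior), or the vertical coordinates of all horizontal tile-edges lie in a single coset of $a_1\mathbb Z + a_2\mathbb Z$ determined by the tiling, and symmetrically in the other direction.

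Next I would use this rigidity to bound the topological complexity function $p(R)$, the number of distinct patterns of the tiling seen through an $R\times R$ window (equivalently, an $(R,\epsilon)$-separated set count for the translation action, which is what computes topological entropy for $\mathbb R^d$ tiling systems). The point is that once the offset cosets in each direction are fixed, the set of admissible column-types and row-types is \emph{finite} and the tiling looks, coarsely, like a product of two one-dimensional coding sequences, each of which is itself highly constrained — a Sturmian-like or finite-type sequence arising from a single irrational. Each one-dimensional factor contributes only polynomially many patterns of length $R$ (linear complexity in the Sturmian case, or even eventually constant in the degenerate/periodic cases), so the two-dimensional window count is at worst the product, which is polynomial in $R$. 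The fault-line alternative only adds a bounded-degree branching (choosing where finitely many fault lines sit), which likewise contributes subexponentially. Hence
\[
\limsup_{R\to\infty}\frac{1}{R^2}\log p(R) = 0,
\]
which is the definition of zero topological entropy for the $\mathbb R^2$-action $\{S_{\vec v}\}$.

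The main obstacle I anticipate is the fault-line lemma itself: controlling \emph{global} rigidity from the \emph{local} incompatibility of irrational ratios. Locally, two rectangles can always abut in many ways; the irrationality only bites when one tries to propagate an interface across the whole plane and close up a cycle, which is a genuinely two-dimensional combinatorial argument and is presumably exactly what the detailed analysis of Section~\ref{sec:2-tilings} is built to handle. A secondary technical point is making precise the reduction from ``topological entropy of the $\mathbb R^2$ translation action on $Y_{\mathcal T}$'' to ``growth rate of the window-pattern count'': one must check that $Y_{\mathcal T}$ with the natural (local/Hausdorff) topology is compact and that the translation action is expansive enough that the pattern-counting function does compute the entropy — standard for tiling dynamical systems of finite local complexity, and I would verify that finite local complexity holds here (it follows from the fault-line lemma, since that bounds the number of local patches). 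Modulo these, the entropy bound is then a direct consequence of the subexponential, in fact polynomial, pattern growth.
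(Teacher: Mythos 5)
Your overall strategy --- reduce to counting patterns in a large window and show the count is $e^{o(R^2)}$ --- is the same as the paper's, and the intuition that the tiling is governed by one-dimensional data is correct. But the central quantitative claim is wrong, and the structure lemma you lean on is not the one that is actually true. In a staircase tiling (no infinite shears) the plane decomposes into a $\Z^2$ lattice of \emph{basic units}, each consisting of a rectangular block of $a_i\times b_j$ copies of one tile next to a $c_j\times d_i$ block of the other, and the tiling is in bijection with a pair of \emph{completely arbitrary} sequences $(a_i,d_i)_{i\in\Z}$ and $(b_j,c_j)_{j\in\Z}$ in $(\N\times\N)^\Z$ together with an offset (Lemma~\ref{lem:bijection}). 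There is no Sturmian-type constraint and no finite set of admissible column types: the alphabet is countably infinite and every pair of sequences occurs. Consequently the number of window patterns is not polynomial in $R$; it is genuinely exponential in $R$ (and, with unbounded block sizes, the purely topological count is more like $e^{O(R\log R)}$). The theorem holds only because this is still $e^{o(R^2)}$. The same issue arises in the other branch of the dichotomy, where the tiling consists of full rows of a single tile type (Proposition~\ref{prop:ptswshears}) and the row-type sequence is an arbitrary element of $\{A,B\}^{\Z}$, contributing a factor of $2^N$ --- again exponential in $N$, not polynomial. So the step ``each one-dimensional factor contributes only polynomially many patterns of length $R$'' is both unsupported and false, even though the conclusion it is meant to serve survives.

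Two further technical points. First, $Y_\CT$ does not have finite local complexity: two tiles can abut with a continuum of relative offsets as one ranges over the space, so the reduction of topological entropy to a pattern count requires an $\epsilon$-discretization; the paper uses a refining family of partitions $\mathcal P^{(n)}$ built on $2^{-n}$ grids and lets $n\to\infty$. Second, because the block-size alphabet is infinite, the paper does not argue topologically at all: it invokes the variational principle and shows that each \emph{ergodic invariant measure} has zero entropy. Finiteness of the measure forces integrability of the block sizes (Lemma~\ref{lem:finiteness}), hence finite entropy $h_1,h_2$ of the two one-dimensional shift factors (Lemma~\ref{lem:finiteentropy}), so that an $N\times N$ window is covered, off a set of measure $4\epsilon$, by roughly $e^{(h_1+2Kh_2+o(1))N}\cdot\mathrm{poly}(N)$ partition elements, which beats $N^2$ in the denominator. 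To repair your argument you would need to supply either this measure-theoretic route or a careful topological count that tolerates the unbounded alphabet; the fault-line lemma as you state it, even if proved, does not cap the complexity in the way your second step requires.
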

 Since there are obvious dynamical obstructions to proving Theorem~\ref{thm:main} if the corresponding dimensions of the tiles are not irrationally related we obtain as a corollary that three tiles is a sharp result for tileability of $\mathbb R^2$ actions:
\begin{cor}\label{c:sharp2}
There exists a measure preserving $\mathbb R^2$ action that is not $\CT$-tileable for any collection $\CT$ of two rectangular tiles. 
\end{cor}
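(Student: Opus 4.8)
The plan is to exhibit a single action that is not $\CT$-tileable for \emph{any} collection $\CT$ of two rectangular tiles. I would take $(X,\mu,\{T_{\vec v}\})$ to be a free, probability measure preserving $\R^2$-action with completely positive entropy --- for concreteness a Bernoulli $\R^2$-action --- and use two of its properties: (i) every nontrivial measurable factor of it has strictly positive entropy, and (ii) for every $\vec v\neq 0$ the transformation $T_{\vec v}$ is ergodic (indeed mixing), a property that must be checked separately since it does not follow from weak mixing alone, but one that holds for Bernoulli $\R^2$-actions. Suppose, toward a contradiction, that there is a collection $\CT=\{\tau_1,\tau_2\}$ with $\tau_i=[0,a_i]\times[0,b_i]$ for which $(X,\mu,\{T_{\vec v}\})$ is $\CT$-tileable, via a factor map $\phi\colon X\to Y_\CT$, and write $\nu=\phi_*\mu$ for the resulting $S$-invariant Borel probability measure on $Y_\CT$. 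I would split the analysis into two cases according to whether the corresponding dimensions of $\tau_1$ and $\tau_2$ are irrationally related.

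Suppose first that $a_1/a_2\notin\mathbb{Q}$ and $b_1/b_2\notin\mathbb{Q}$. Then Theorem~\ref{thm:zero} gives that $(Y_\CT,\{S_{\vec v}\})$ has zero topological entropy, so by the variational principle $h_\nu(\{S_{\vec v}\})=0$, and therefore $(Y_\CT,\nu,\{S_{\vec v}\})$ is a zero-entropy measurable factor of $(X,\mu,\{T_{\vec v}\})$. By (i) this factor is the trivial one-point system, so $\nu$ is a point mass concentrated on a single tiling fixed by every $S_{\vec v}$. But no tiling of $\R^2$ by bounded rectangles is translation invariant --- the union of its tile boundaries is a nonempty set of Lebesgue measure zero, which translation invariance would force to equal all of $\R^2$ --- so this is impossible. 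This is the case in which the new content of the paper, Theorem~\ref{thm:zero}, does the work, and in which property (i) (rather than mere mixing) is essential.

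Suppose instead that at least one of the two ratios is rational. Interchanging the two coordinate axes if necessary, we may assume $a_1/a_2=p/q$ in lowest terms; set $\alpha=a_1/p=a_2/q>0$, so that $a_1,a_2\in\alpha\Z$. For $\nu$-a.e.\ tiling $y$ the origin lies in the interior of a unique tile of $y$; let $g(y)\in\R/\alpha\Z$ be the $x$-coordinate of the left edge of that tile, reduced modulo $\alpha$. The point is that consecutive tiles of $y$ met by the line $\R\times\{0\}$ share a vertical edge and have widths in $\alpha\Z$, so a short continuity-in-$s$ argument shows $g(S_{(s,0)}y)\equiv g(y)+s\pmod{\alpha}$ for every $s\in\R$ and $\nu$-a.e.\ $y$. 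Hence $g\circ S_{(\alpha,0)}=g$, while this same relation together with the $S$-invariance of $\nu$ forces $g_*\nu$ to be translation invariant on $\R/\alpha\Z$, hence Haar measure; so $g$ is not $\nu$-almost-surely constant and its level sets exhibit a nontrivial $S_{(\alpha,0)}$-invariant set, i.e.\ $S_{(\alpha,0)}$ is not ergodic on $(Y_\CT,\nu)$. Since $\phi$ conjugates $T_{(\alpha,0)}$ to $S_{(\alpha,0)}$, this contradicts (ii). This step is the $\R^2$ analogue of Rudolph's observation that, in dimension one, a rational ratio of tile lengths produces a non-ergodic time in the tiling system.

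As every pair of rectangular tiles falls into one of the two cases, $(X,\mu,\{T_{\vec v}\})$ is not $\CT$-tileable for any two-rectangle $\CT$, which is Corollary~\ref{c:sharp2}. The one point I would expect to require care --- beyond citing Theorem~\ref{thm:zero} --- is the uniformity in the second case: one needs a time map that is non-ergodic with respect to \emph{every} invariant measure on $Y_\CT$, not merely a distinguished one, and this is supplied cleanly by the observation that $g_*\nu$ must be Haar measure, so that the detailed inventory of invariant measures carried out in Section~\ref{sec:2-tilings} is not needed here.
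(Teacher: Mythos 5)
Your proposal is correct and follows essentially the same route as the paper: a Bernoulli $\R^2$-action, with the incommensurable case killed by Theorem~\ref{thm:zero} together with complete positive entropy, and the commensurable case killed by non-ergodicity of a time map. The only (minor) variation is that for the commensurable case the paper cites Lemma~\ref{lem:incommens} (non-existence of a measure ergodic in both coordinate directions), whereas you give a direct and self-contained argument via the left-edge cocycle $g$ showing $S_{(\alpha,0)}$ is non-ergodic; both are routine and rest on the same ``obvious dynamical obstruction.''
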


We conjecture that the analogous result holds for all $d\geq 1$, meaning that there exists 
a measure preserving $\R^d$ action that is not $\CT$-tileable for any collection $\CT$ of $d$ 
rectangular tiles, but our proof does not readily generalize.  

The proof of Theorem~\ref{thm:zero} depends on the fact that the geometric structure of tilings of the plane with tiles from such a collection $\CT$ is quite rigid.  As a consequence of this rigidity, we show that the set of ergodic, invariant measures on $Y_\CT$ can be characterized via a dichotomy:  either the measure is supported on tilings with infinitely many {\em bi-infinite shears} or the measure is supported on tilings with a staircase geometric structure we call {\em staircase tilings}.

Deferring the precise definitions to Section~\ref{sec:2-tilings}, 
shears are horizontal or vertical boundaries between non-aligned tiles.   
The analysis of the infinite shear case is straightforward and is completed in Section~\ref{subsec:patterns-on-shears}.  The detailed  analysis of the structure of staircase tilings and a
complete description of the measures supported on them is the 
content of  Sections~\ref{subsec:shears-in-staircase}--\ref{subsec:measures-on-staircases}.
In the absence of infinite  shears  we identify a 
particular feature of the tilings, namely a pair of families of \emph{staircases} constructed from terminating shears. These tilings 
have a $\Z^2$ lattice type structure
that allows us to 
represent a measure-preserving $\R^2$ action on a staircase tiling
as a suspension over a $\Z^2$ action.   

In Section~\ref{subsec:entropy}, we compute the entropy of invariant measures on 
$Y_{\mathcal T}$ and use this to show that there exist $\R^2$ systems that 
are not tileable with only two tiles.  
On the other hand, in Section~\ref{sec:positive-2-tiles}, we show that 
some classes of measure preserving actions are tileable with exactly 
two tiles.  
Finally, in Section~\ref{sec:mixing} we show that for any $\mathcal T$ consisting of a pair of rectangles whose corresponding dimensions are incommensurable, 
there exists a mixing $\R^2$ measure preserving system that 
is $\mathcal T$-tileable. 

We are grateful to Valery Ryzhikov for bringing his work in ~\cite{Ryz1} and ~\cite{Ryz2} to our attention.  In ~\cite{Ryz1} he proves that there is  no ``epsilon-free'' Rohlin Lemma  for $\mathbb Z^2$-actions of completely positive entropy by proving that the factor of the $\mathbb Z^2$ action obtained from the partition consisting of an epsilon-free tower and its error set has to have zero entropy.  The proof of this claim is strikingly similar to our argument in Section~\ref{subsec:entropy}.  It is also interesting to note that in spite of the similarities in their proofs, neither result can be obtained from the other.  
 
%
%
%

\subsection{Acknowledgment} We thank Jason Siefken and Tim Austin
for helpful conversations.

\section{The Notched Cube Tiling and the Proof of Theorem~\ref{thm:main}}\label{s:tilingthm}


\subsection{The notched cube tiling $\mathcal N$}
The $d+1$ basic tiles that satisfy Theorem~\ref{thm:main} are defined as follows:
\begin{definition}
\label{def:T-alpha}
Fix an integer $d\in\mathbb N$ and $\alpha$ satisfying $0< \alpha< 1$.  
We define an associated set $\mathcal T_\alpha=\{\tau_1,\ldots,\tau_{d+1}\}$ of 
basic tiles, where
\begin{align*}
\tau_i&=[0,\alpha)^{i-1}\times[0,1-\alpha)\times[0,1)^{d-i}\qquad\text{for $i<d$}\\
\tau_d&=[0,\alpha)^{d-1}\times[0,1-\alpha)\qquad\text{and}\\
\tau_{d+1}&=[0,\alpha)^d.
\end{align*}
%
\end{definition}

Note that the unit cube can be tiled by the basic tiles:
\begin{equation*}
[0,1)^d=\left(\bigcup_{i=1}^{d+1}\tau_i+(\overbrace{1-\alpha,\ldots,{1-\alpha}}^{i-1},0,\ldots,0)\right).
\end{equation*}
In particular if the $i$th coordinate of a point $x\in[0,1)^d$ is the first coordinate in the range $[0,1-\alpha)$, then it lies in $\tau_i$.  Otherwise it lies in the unique translate of $\tau_{d+1}$ that occurs in its tiling.
See Figure~\ref{fig:cubedecomp} for a picture of the four tiles used in three dimensions tiling the 
unit cube in $\R^3$.

\begin{figure}[h!]
\includegraphics[width=1.5in]{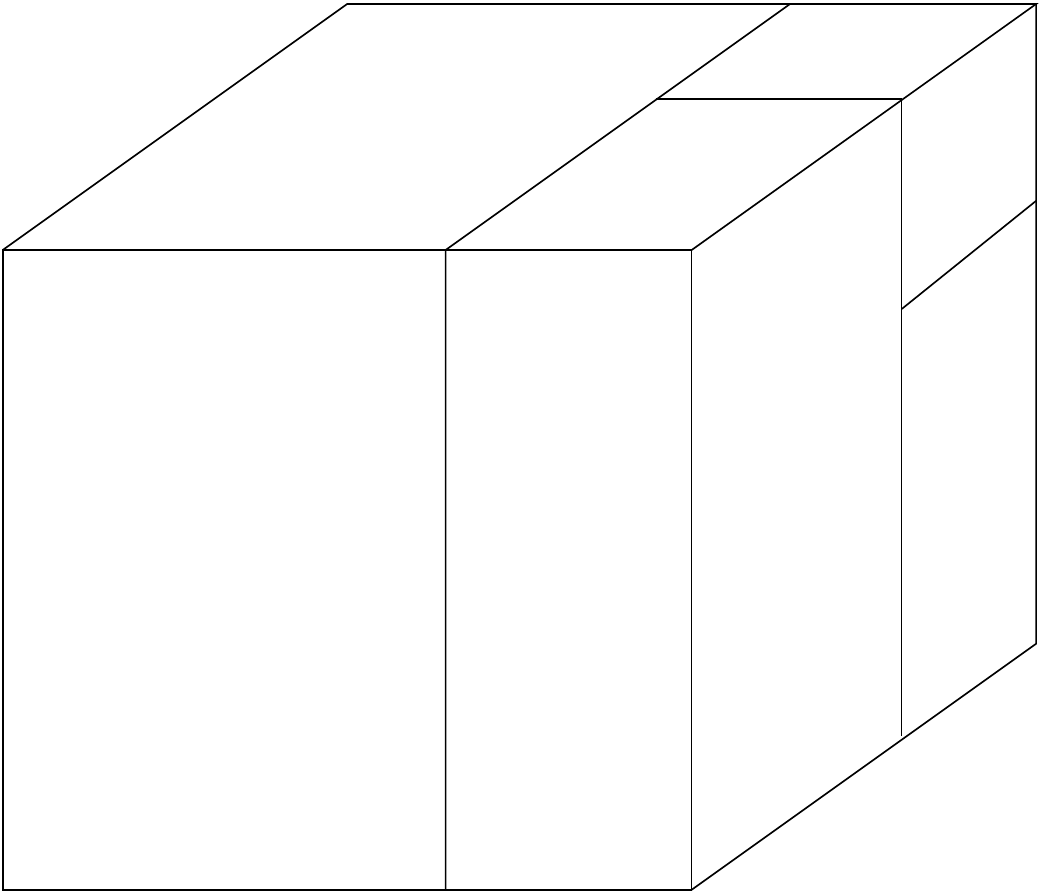}
\caption{The unit cube decomposed into the $d+1$ basic tiles}
\label{fig:cubedecomp}
\end{figure}

\begin{figure}[h!]
\includegraphics[width=1.5in]{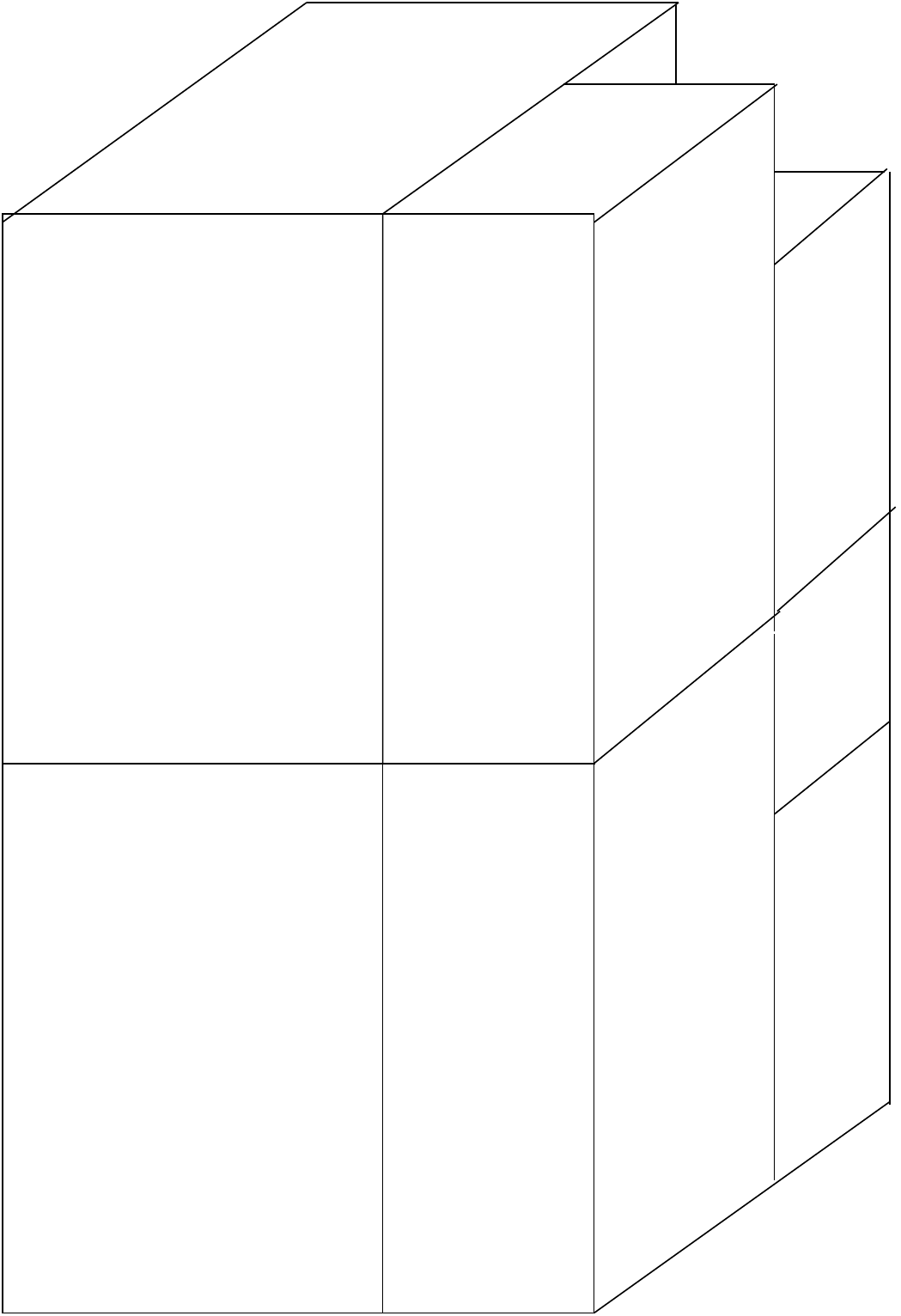}
\caption{The supertile $\tau^*_\alpha$ and its tiling by the basic tiles}
\label{fig:fd}
\end{figure}

The supertile consists of two stacked copies of the unit cube, with a copy of $\tau_{d+1}$ removed from a corner (see Figure~\ref{fig:fd}):
\begin{definition}
For $0<\alpha<1$ and associated $\mathcal T_\alpha$, the  {\it supertile} $\tau^*_\alpha$ for
 $\mathcal T_\alpha$ is defined to be
\begin{align*}
\tau_\alpha^*&=[0,1)^d\cup \bigcup_{i=1}^d\left(\tau_i+(\overbrace{1-\alpha,\ldots,{1-\alpha}}^{i-1},0,\ldots,0,1)\right)\\
&=\left([0,1)^d\times[0,2)\right)\setminus\bigl([1-\alpha,1)^{d-1}\times[2-\alpha,2)\bigr).
\end{align*}

We let $\mathcal T_\alpha^* = \mathcal T_\alpha\cup\tau^*_\alpha$ denote the 
union of the basic tiles and the supertile.
\end{definition}


\begin{notation*}
In what follows we fix an $\alpha\in(0,1)$, its associated sets of tiles $\mathcal T_\alpha$ 
and $\mathcal T_\alpha^*$, and the supertile $\tau^*_\alpha$.  For notational convenience we suppress the subscripts $\alpha$ for the remainder of the section and simply write $\mathcal T$, 
$\mathcal T^*$ and $\tau^*$.
\end{notation*}

Since the supertile can be decomposed into basic tiles,  we immediately have:
\begin{lem}\label{decomp}
Given any element $y^*\in Y_{\CT^*}$, there is a corresponding tiling $y\in Y_{\CT}$ obtained by decomposing each translate of the supertile into its basic tile components.
\end{lem}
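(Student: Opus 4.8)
The plan is to verify directly that replacing each supertile by its basic-tile decomposition turns a $\CT^*$-tiling into a $\CT$-tiling. Since the decomposition of $\tau^*$ into basic tiles is already spelled out in the definition of the supertile, the only real content is checking that this refinement does not create new interior overlaps.

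First I would record the supertile decomposition. Combining the displayed identity expressing $[0,1)^d$ as a union of $d+1$ translates of the basic tiles with the first displayed formula for $\tau^*$, one obtains a list of $2d+1$ translates of basic tiles, say $\sigma_1,\dots,\sigma_{2d+1}$, with $\tau^*=\bigcup_j\sigma_j$ and with pairwise disjoint interiors: the first $d+1$ of them tile $[0,1)^d$; the remaining $d$ are the translates by $e_d$ of all of those pieces except the one equal to a translate of $\tau_{d+1}$, so their interiors are disjoint for the same reason; and the bottom block and the top block are separated by the hyperplane $x_d=1$. Thus $\{\sigma_1,\dots,\sigma_{2d+1}\}$ is itself a $\CT$-tiling of the region $\tau^*$.

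Next, given $y^*\in Y_{\CT^*}$, I would define $y$ as follows: each tile of $y^*$ is a translate $\tau+\vec v$ with $\tau\in\CT^*$; if $\tau\in\CT$ we keep $\tau+\vec v$ unchanged, and if $\tau=\tau^*$ we replace $\tau^*+\vec v$ by the tiles $\sigma_1+\vec v,\dots,\sigma_{2d+1}+\vec v$. To see that $y\in Y_{\CT}$, first note that the union of the tiles of $y$ equals the union of the tiles of $y^*$, since each supertile has been replaced by a family with the same union; hence $y$ covers $\R^d$. For the overlap condition, take distinct tiles $A,B$ of $y$. If they come from distinct tiles $A'\supseteq A$ and $B'\supseteq B$ of $y^*$, then $\mathrm{int}(A)\cap\mathrm{int}(B)\subseteq\mathrm{int}(A')\cap\mathrm{int}(B')=\emptyset$ because $y^*$ is a tiling; if they come from the same supertile $\tau^*+\vec v$, then $A=\sigma_i+\vec v$ and $B=\sigma_j+\vec v$ with $i\ne j$, and their interiors are disjoint by the previous paragraph. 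So $y$ is a $\CT$-tiling. Since the construction is carried out uniformly, $y^*\mapsto y$ also commutes with the translation action, giving an equivariant map $Y_{\CT^*}\to Y_{\CT}$, which is the form in which the lemma will be used.

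The construction presents essentially no obstacle; the one point requiring a little care is that refining a tiling by subdividing its tiles cannot introduce interior overlaps, which comes down to the monotonicity $\mathrm{int}(A)\subseteq\mathrm{int}(A')$ for $A\subseteq A'$ together with the internal consistency of the supertile's own decomposition into basic tiles.
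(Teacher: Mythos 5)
Your proposal is correct and matches the paper's treatment: the paper gives no separate proof, introducing the lemma with ``Since the supertile can be decomposed into basic tiles, we immediately have,'' and your argument is exactly the routine verification being elided (the supertile splits into $2d+1$ basic tiles with pairwise disjoint interiors, and refining each translate of the supertile accordingly preserves the covering and non-overlap conditions). The added observation that the map is translation-equivariant is a harmless and useful bonus.
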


We are left with showing that $Y_{\CT^*}$  is not empty. 
 \begin{prop}\label{p:periodic}
Let $A$ be the matrix with non-zero entries
  $A_{i,i}=1$ for $i<d$; $A_{i+1,i}=-\alpha$ for $i<d$;
$A_{i,d}=1-\alpha$ for $i<d$ and $A_{d,d}=2-\alpha$:
\begin{equation}\label{allA}
A=
\begin{pmatrix}
1&0&\ldots &&0&1-\alpha\\
-\alpha&1&0&\ldots&0&1-\alpha\\
0&-\alpha&1&0&\ldots&1-\alpha\\
\\
\vdots&&\ddots&\ddots&&\vdots\\
\\
0&\ldots&0&-\alpha&1&1-\alpha\\
0&\ldots&&0&-\alpha&2-\alpha
\end{pmatrix}.
\end{equation}
Then $\tau^*$ is a fundamental domain for the translation action of $A\Z^d$ on
  $\R^d$.
\end{prop}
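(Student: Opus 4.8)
The plan is to verify directly that the translates $\{\tau^* + Av : v \in \Z^d\}$ cover $\R^d$ with overlaps only on boundaries; equivalently, that the map $\R^d \to \R^d/A\Z^d$ restricted to $\tau^*$ is a bijection (up to boundary/measure-zero issues). Since $\tau^*$ is, up to the removed notch, two stacked unit cubes, its Lebesgue measure is $2 - \alpha^{d-1}(1-\alpha) \cdot$ (wait—more precisely $\vol(\tau^*) = 2 - \alpha^{d-1}\cdot\alpha$, i.e.\ $2 - \alpha^d$... let me just say) $\vol(\tau^*) = 2 - \alpha^{d-1}\cdot\alpha$, the volume of two unit cubes minus the notch $[1-\alpha,1)^{d-1}\times[2-\alpha,2)$, which has volume $\alpha^{d-1}\cdot\alpha = \alpha^d$; so $\vol(\tau^*) = 2-\alpha^d$. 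On the other hand $|\det A| = 2-\alpha$ by expanding along the last column: the cofactor of $A_{d,d} = 2-\alpha$ is $1$ (the upper-left $(d-1)\times(d-1)$ block is lower-triangular with $1$'s on the diagonal), and the other terms in the expansion contribute $\alpha$ times a telescoping sum. Hmm—these two numbers, $2-\alpha^d$ and $2-\alpha$, do not agree for $d>1$, so the claim as I've set it up cannot be right; this tells me the intended reading is that the notch has dimensions making its volume exactly $\alpha$, i.e.\ one should recompute: the notch $[1-\alpha,1)^{d-1}\times[2-\alpha,2)$ has volume $\alpha^{d-1}\alpha = \alpha^d$. So I would first double-check the statement of $\tau^*$ and the matrix against Figure~\ref{fig:fd}; assuming the paper is internally consistent, the determinant computation and the volume computation must match, and reconciling them pins down exactly which coordinates carry the factor $\alpha$ versus $1-\alpha$. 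This bookkeeping is the first concrete step.

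Granting $\vol(\tau^*) = |\det A|$, it suffices to show the translates \emph{cover} $\R^d$, since a covering of $\R^d$ by lattice translates of a set whose volume equals the covolume of the lattice must be a tiling (overlaps would force the volume to be strictly larger, by countable additivity on a fundamental domain). To prove covering, I would argue by induction on $d$ and exploit the lower-triangular-plus-last-column structure of $A$: the first $d-1$ columns of $A$ are $e_i - \alpha e_{i+1}$, and modding out by these is essentially a skew shift in each coordinate. Concretely, given an arbitrary point $x\in\R^d$, I would use the first $d-1$ lattice generators to reduce $x$ into the slab $[0,1)^{d-1}\times\R$ (choosing integer combinations one coordinate at a time, top to bottom, using that the generator $e_i - \alpha e_{i+1}$ changes coordinate $i$ by an integer while perturbing coordinate $i+1$—the perturbation cascades but each step is resolvable), and then use the last generator $A e_d = (1-\alpha,\dots,1-\alpha,2-\alpha)^T$ together with leftover first-$(d-1)$ generators to place the last coordinate into $[0,2)$ while keeping the first $d-1$ coordinates in $[0,1)$; the shape of $\tau^*$ as two stacked cubes minus the corner notch is exactly engineered so that the ambiguity introduced by the $1-\alpha$ shears in the last generator is absorbed.

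The main obstacle will be the last step: handling the interaction between the ``vertical'' generator $Ae_d$ (whose action on the first $d-1$ coordinates is a simultaneous shift by $1-\alpha$) and the notch. When we translate by $Ae_d$ to bring the $d$-th coordinate down into range, the first $d-1$ coordinates shift by $1-\alpha$ and may leave $[0,1)$; correcting them with the generators $e_i - \alpha e_{i+1}$ then re-perturbs lower coordinates. I expect the clean way to organize this is to describe $\tau^*$ directly as the image of the unit $\times$-$[0,2)$ box under a shear and to show the lattice $A\Z^d$ is exactly the shear-image of a more transparent lattice (a lattice containing $\Z^{d-1}\times 2\Z$ plus one diagonal vector), reducing the claim to the statement that the notched cube tiles $\R^d$ by that transparent lattice—precisely the Stein/Kolountzakis notched-cube tiling cited in the text. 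Thus the endgame is: exhibit the change of coordinates turning $A\Z^d$ into the known notched-cube lattice, and invoke (or re-prove in a line or two) that the notched cube tiles under it. I would present the determinant/volume match, the ``covering $+$ volume $\Rightarrow$ tiling'' principle, the coordinate reduction, and the reduction to the known notched-cube tiling, in that order.
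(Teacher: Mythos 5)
Your overall framework (volume of $\tau^*$ equals the covolume of $A\Z^d$, plus a one-sided condition, implies tiling) is the same device the paper uses in Lemma~\ref{lem:fd}, but you have two genuine problems. First, your determinant is wrong: expanding along the bottom row (or checking $d=2$: $\det\bigl(\begin{smallmatrix}1&1-\alpha\\-\alpha&2-\alpha\end{smallmatrix}\bigr)=2-\alpha+\alpha-\alpha^2$) gives $\det A=2-\alpha^d$, not $2-\alpha$. This matches $\vol(\tau^*)=2-\alpha^d$ exactly, so there is no inconsistency in the paper to reconcile; the mismatch you flagged is an error in your cofactor expansion (the off-diagonal $-\alpha$ entries contribute a product, not a telescoping sum), and as written your proof stalls at the very first step because you never establish $\vol(\tau^*)=|\det A|$, which both your argument and the paper's require.

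Second, and more seriously, you chose the \emph{covering} direction (covering $+$ volume $\Rightarrow$ tiling), which is logically sound but is precisely the hard direction here, and you do not carry it out. Your coordinate reduction runs into the cascade you yourself identify: translating by $Ae_d=(1-\alpha,\ldots,1-\alpha,2-\alpha)^T$ to fix the last coordinate disturbs the first $d-1$, re-correcting those by $e_i-\alpha e_{i+1}$ re-disturbs the last, and you never show this process terminates inside $\tau^*$; instead you defer to an unexhibited change of coordinates onto the Stein/Kolountzakis lattice. The paper avoids all of this by proving the \emph{packing} direction instead: it shows $(\tau^*+A\vec v)\cap\tau^*=\emptyset$ for $\vec v\ne 0$ by a four-case analysis in which, for each $\vec v$, a single coordinate of $A\vec v$ is shown to be at least $1$ (or, in the case $\vec v=v_d\vec e_d$, the two translates are separated by complementary unions/intersections of half-spaces). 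Disjointness needs only one separating coordinate per lattice vector and is a short local computation, whereas covering demands a global argument; if you want to salvage your write-up, the cleanest fix is to switch to the disjointness side of the volume lemma.
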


Before proving the proposition, we give a characterization of the fundamental domain of a translation action on $\mathbb R^d$:
\begin{lem}\label{lem:fd}
  Fix an integer $d\geq 1$.  Let $A$ be an invertible $d\times d$ matrix with entries in $\mathbb R$ and let $B\subset\R^d$ 
  be a bounded, measurable set satisfying $B\cap (B+A\vecv)=\emptyset$ for every $\vecv\in\Z^d$.
  Then the volume $\vol(B)\le \det(A)$. 
 
  Furthermore, if $\vol(B) = \det(A)$,  then $B$
  agrees up to a set of measure 0 with a fundamental domain for the
  action of $A\Z^d$ on $\R^d$ by translation.
\end{lem}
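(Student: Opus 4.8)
The plan is to reduce the statement to a standard covering/packing fact about the torus $\R^d/A\Z^d$ by pushing forward Lebesgue measure under the quotient map $\pi\colon\R^d\to\R^d/A\Z^d$. First I would observe that $\pi$ restricted to any fundamental parallelepiped $A[0,1)^d$ is a bijection onto the torus, so that the total mass of the torus (with the measure pushed forward from Lebesgue measure on $\R^d$) is exactly $\det(A)$ (we may assume $\det(A)>0$ after replacing $A$ by a column permutation, which does not change $A\Z^d$). The hypothesis $B\cap(B+A\vecv)=\emptyset$ for all $\vecv\in\Z^d\setminus\{0\}$ says precisely that $\pi$ is injective on $B$; since $B$ is bounded and measurable, $\pi|_B$ is a measure-preserving injection onto a measurable subset $\pi(B)$ of the torus. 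Hence $\vol(B)=\vol(\pi(B))\le\det(A)$, which gives the first assertion.

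For the equality case, suppose $\vol(B)=\det(A)$. Then $\pi(B)$ has full measure in the torus, so its complement $N=(\R^d/A\Z^d)\setminus\pi(B)$ is null. Let $F=\pi^{-1}(\pi(B))\subset\R^d$; this set is $A\Z^d$-periodic, and I claim it is a fundamental domain for $A\Z^d$ up to measure zero. Indeed, $F$ contains $B$ and meets each coset of $A\Z^d$ in at least one point (since $\pi(F)=\pi(B)$ is co-null, in fact it meets a co-null set of cosets in exactly the points coming from $B$). To extract an honest fundamental domain, I would intersect with a genuine fundamental parallelepiped: set $D=B\cap A[0,1)^d$ together with, for the null set of cosets not represented, an arbitrary choice of representative — but more cleanly, I would argue directly that $B$ itself already tiles. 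The key step is: the translates $\{B+A\vecv:\vecv\in\Z^d\}$ are pairwise disjoint by hypothesis, and their union has full measure because its image under $\pi$ is $\pi(B)$, which is co-null, and $\pi$ is locally measure preserving. Therefore $\{B+A\vecv\}$ is a measurable partition of $\R^d$ up to a null set, which is exactly the definition of $B$ agreeing up to measure zero with a fundamental domain for the translation action of $A\Z^d$.

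The main obstacle, and the point needing the most care, is the passage from ``$\pi(B)$ is co-null in the torus'' to ``$\bigcup_{\vecv}(B+A\vecv)$ is co-null in $\R^d$'', i.e.\ checking that the pushforward bookkeeping is valid for a merely bounded measurable set rather than a nice open set; this is where one uses that $\pi$ is a countable-to-one local homeomorphism that is locally measure preserving, so that $\vol(\pi^{-1}(E))=\sum$ of volumes of the sheets and measure-zero sets pull back to measure-zero sets. A clean way to organize this is to integrate the counting function: define $g(x)=\sum_{\vecv\in\Z^d}\one_B(x+A\vecv)$ on the torus; the disjointness hypothesis says $g\le 1$ a.e., and $\int g=\vol(B)=\det(A)=\vol(\text{torus})$ forces $g=1$ a.e., which says precisely that a.e.\ point of $\R^d$ lies in exactly one translate $B+A\vecv$. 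I would also need to remark at the outset that $B$ being bounded guarantees $g$ is finite everywhere and the sum defining it is locally finite, so Fubini/Tonelli applies without difficulty.
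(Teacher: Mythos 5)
Your proposal is correct and follows essentially the same route as the paper: project to the quotient $Q=\R^d/A\Z^d$, note that the induced measure has total mass $\det A$, use the disjointness hypothesis to see that $\pi$ is a measure-preserving injection on $B$, and in the equality case observe that the complement of $\pi(B)$ is null so that adjoining a null set (equivalently, that the disjoint translates $B+A\vecv$ cover $\R^d$ up to a null set) makes $B$ a fundamental domain. Your extra bookkeeping via the counting function $g(x)=\sum_{\vecv}\mathbf{1}_B(x+A\vecv)$ is a clean way to make explicit what the paper leaves implicit, but it is the same argument.
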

\begin{proof}
Set $Q=\R^d/A\Z^d$. Consider the projection map $\pi\colon\R^d\to Q$
given by $\pi(x)=x+A\Z^d$. Since $A[0,1)^d$ is a fundamental domain for the 
action of $A\Z^d$ on $\R^d$ by translation and
has volume $\det A$, the natural measure $m$ induced on the
quotient has total mass $\det A$. By assumption, $\pi$ is a bijection between
$B$ and $\pi(B)$, meaning that $\vol(B)=m(\pi(B))\le \det A$.

If equality holds, then setting  $N$ to be the null set $\pi^{-1}(Q\setminus
\pi(B))\cap A[0,1)^d$, we have that  $B\cup N$ is a fundamental domain for the
action of $A\Z^d$ on $\R^d$ by translation.
\end{proof}

We use this to complete the proof of the proposition: 
\begin{proof}[Proof of Proposition~\ref{p:periodic}]
Expanding across the bottom row and using induction, one 
can check that $\det A=2-\alpha^d$.

By Lemma \ref{lem:fd}, it suffices to show that $\left(\tau^*+A\vecv\right)\cap
\tau^*=\emptyset$ for $\vecv\in\Z^d\setminus\{0\}$.  
By symmetry, it suffices to
establish this for $\vecv$ satisfying $v_d\ge 0$.  

We prove the disjointness case by case:
\begin{description}
\item[Case 1]
($v_d=0$). Let $v_i$ be the first non-zero component of $\vec v$.  By symmetry it suffices to assume $v_i>0$.   and so $(A\vecv)_i\ge 1$. Since the $i$ coordinates
of $\tau^*$ lie in $[0,1)$, it follows that $\tau^*$ and $\tau^*+A\vec v$ are
disjoint.

\item[Case 2]
($v_d>0$; $v_i>0$ for some $i<d$). Let $i$ be the first
coordinate such that $v_i>0$.  Since $(A\vecv)_i=-v_{i-1}\alpha + v_i+v_d(1-\alpha)$,
where we set $v_{-1}=0$, $(A\vecv)_i$ is at least 1 and we are done
as in Case 1.

\item[Case 3]
($v_d>0$; $v_{i}<0$ for some $i<d$). We can assume that $i$ is the
greatest index up to $d-1$ for which $v_i<0$. Then as above we have
$(A\vecv)_{i+1}\ge1$ and the disjointness follows. 

\item[Case 4]
($v_d>0$; $v_i=0$ for $1\le i<d$).  Then $\tau^*$ is contained in the union of the half
spaces $\bigcup_{i< d}\{x\colon x_i<1-\alpha\} \cup\{x\colon
x_d<2-\alpha\}$, whereas $\tau^*+A\vecv$ is contained in the intersection of the half
spaces $\bigcap_{i<d}\{x\colon x_i\ge 1-\alpha\}\cap \{x\colon x_d\ge
2-\alpha\}$, which are complementary sets.  
\end{description}
\end{proof}

Translating the statement of Proposition~\ref{p:periodic} into tilings, we have: 
\begin{cor}\label{cor:periodic}
There is a periodic tiling of $\mathbb R^d$ by the supertile $\tau^*$.
\end{cor}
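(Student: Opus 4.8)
The plan is to derive Corollary~\ref{cor:periodic} directly from Proposition~\ref{p:periodic} by unwinding the definitions. Since $\tau^*$ is a fundamental domain for the action of $A\Z^d$ on $\R^d$ by translation, the family of translates $\{\tau^* + A\vecv : \vecv \in \Z^d\}$ covers $\R^d$, and any two distinct translates are disjoint up to their boundaries (indeed, disjoint as the half-open sets they are, by the proof of the proposition). This is precisely the data of a tiling of $\R^d$ by copies of the single tile $\tau^*$, i.e.\ an element $y \in Y_{\CT^*}$, which shows $Y_{\CT^*} \neq \emptyset$.

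The one point requiring a word is \emph{periodicity}: the tiling $y$ constructed above is invariant under $S_{A\vecv}$ for every $\vecv \in \Z^d$, because translating the whole family $\{\tau^* + A\vecw\}_{\vecw}$ by $A\vecv$ just reindexes it via $\vecw \mapsto \vecw + \vecv$. Since $A$ is invertible, $A\Z^d$ is a full-rank lattice in $\R^d$, so $y$ has a cocompact group of translation symmetries, which is exactly what ``periodic'' means. Hence $y$ is a periodic $\CT^*$-tiling of $\R^d$.

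I do not expect any genuine obstacle here: the corollary is a restatement of the proposition in the language of tilings rather than fundamental domains, and all the real work (computing $\det A = 2 - \alpha^d$ and checking the four disjointness cases) has already been done. The only thing to be slightly careful about is matching conventions: one should confirm that the notion of $\CT^*$-tiling from the introduction (cover of $\R^d$ by translates of tiles from $\CT^*$ overlapping only on boundaries) is met by the half-open translates $\tau^* + A\vecv$, which it is, since half-open sets that partition $\R^d$ a fortiori overlap only on boundaries once one passes to closures.

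\begin{proof}[Proof of Corollary~\ref{cor:periodic}]
By Proposition~\ref{p:periodic}, $\tau^*$ is a fundamental domain for the translation action of $A\Z^d$ on $\R^d$, so the translates $\{\tau^* + A\vecv : \vecv \in \Z^d\}$ cover $\R^d$ and, being half-open sets that partition $\R^d$, overlap only on their boundaries. Thus $y = \{\tau^* + A\vecv : \vecv \in \Z^d\}$ is a $\CT^*$-tiling of $\R^d$, i.e.\ $y \in Y_{\CT^*}$. Moreover, for each $\vecw \in \Z^d$, translation by $A\vecw$ carries $y$ to itself (it merely reindexes the translates), so $y$ is invariant under the cocompact lattice $A\Z^d$ of translations. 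Hence $y$ is a periodic tiling of $\R^d$ by $\tau^*$.
\end{proof}
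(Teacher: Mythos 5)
Your proof is correct and takes essentially the same route as the paper, which offers no separate argument at all: it simply prefaces the corollary with ``Translating the statement of Proposition~\ref{p:periodic} into tilings, we have:''. You have merely spelled out the routine translation (translates of a fundamental domain partition $\R^d$ and the resulting tiling is $A\Z^d$-invariant), which is exactly what the paper leaves implicit.
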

Let $\mathcal N$ denote the periodic tiling of $\mathbb R^d$ by the supertile given by Corollary~\ref{cor:periodic}. We call $\mathcal N$ the {\it notched cube tiling}.  

\subsection{$\mathcal N$ is a uniform filling set}
\label{subsec:mixing-notched}
 
As before, we fix $0<\alpha<1$ and the associated tiles and tilings.  In the proof of Theorem~\ref{thm:main} the tiling $\mathcal N$ plays the role of a {\it uniform filling set} as introduced in \cite{RS4}.  In particular, we use $\mathcal N$ as a canvas, using a sequence of Rohlin towers to inductively tile larger and larger pieces of orbits.  The filling property of $\mathcal N$ is used to achieve agreement between tilings from different stages and thus is key to ensuring that the sequence of tilings converges.  The difference in the filling property defined below in Proposition~\ref{p:shift} comes from the fact that we are using a continuous group action.  

The filling properties of $\mathcal N$ are a consequence of the way faces of the translate of the supertile intersect.  We call the faces of $\tau^*$ with extremal $\vec e_d$ coordinates the {\it top} and {\it bottom} faces, where $\vec e_d$ denotes the $d$th standard basis element of $\mathbb R^d$.

\begin{lem}\label{lem:intersect}{\em(Tile Intersection Property)} For any $0<\alpha<1$, let $\tau^*$ be the supertile for the set of tiles $\CT$.  Then for any $\vec v\in\mathbb Z^d$, if the top boundary of the translate $\tau^*+ A\vec v$ intersects the bottom boundary of any other translate of $\tau^*$, then this intersection exactly agrees with the top boundary of a basic tile of $\tau^*+A\vec v$.
\end{lem}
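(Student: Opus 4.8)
The plan is to describe precisely which part of the boundary of the notched cube $\tau^*$ constitutes its top face and which part constitutes its bottom face, and then to use the explicit form of the lattice $A\Z^d$ from Proposition~\ref{p:periodic} to see how a translate whose top face lies at height $x_d=2$ meets the translates sitting directly above it (whose bottom faces lie at height $x_d=2$). The top face of $\tau^*$ is, from the second description of $\tau^*$ in the definition, the slab $\bigl([0,1)^d\setminus[1-\alpha,1)^{d-1}\bigr)\times\{2\}$ in the coordinates $(x_1,\dots,x_{d-1})$; and this set is itself exactly the union of the translated faces of the basic tiles $\tau_1,\dots,\tau_d$ that make up the second ``story'' of the supertile. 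Likewise the bottom face of $\tau^*$ is the full square $[0,1)^d\times\{0\}$ (in the first $d-1$ coordinates: the full cube face). So the first step is simply to record these two descriptions and note that the top face decomposes as $\bigcup_{i=1}^d\bigl(\pi_{d-1}(\tau_i)+(\overbrace{1-\alpha,\dots,1-\alpha}^{i-1},0,\dots,0)\bigr)\times\{2\}$, where $\pi_{d-1}$ denotes projection onto the first $d-1$ coordinates.

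Next I would identify which lattice vectors $A\vecv$ can move the bottom face of one copy of $\tau^*$ so that it lies in the plane $x_d=2$: from the matrix $A$, the $d$th coordinate of $A\vecv$ is $-\alpha v_{d-1}+(2-\alpha)v_d$, so we need this to equal $2$, and (chasing through the other coordinates and using boundedness of $\tau^*$ in each direction) the only solutions contributing a translate that actually abuts the top of $\tau^*+A\vec0$ are $v_d=1$ together with a specific finite set of choices of $(v_1,\dots,v_{d-1})$. For each such $\vecv$ the horizontal displacement $\bigl((A\vecv)_1,\dots,(A\vecv)_{d-1}\bigr)$ is of the form $(1-\alpha)\,\mathbf 1 + (\text{integer/}{-\alpha}\text{ combination})$, and I would check that these displacements are exactly the offsets $(\overbrace{1-\alpha,\dots,1-\alpha}^{i-1},0,\dots,0)$ appearing in the decomposition of the top face, modulo the way the notched corner is filled by the adjacent period. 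In other words, the bottom faces of the neighbouring copies, intersected with the top face of $\tau^*+A\vec0$, partition that top face precisely along the grid of the basic tiles $\tau_1,\dots,\tau_d$, so any single such intersection is the face of one basic tile, which is the assertion.

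The main obstacle I expect is the bookkeeping around the \emph{notch}: because the top face of $\tau^*$ is a cube face with an $\alpha$-corner removed, the copies of $\tau^*$ sitting above it are not simply a single translate of the bottom-cube-face grid — one of the neighbouring supertiles contributes its lower cube face to fill the notch region while being offset in the $x_d$-direction by the stacking, and one must verify that this still meets the top face of $\tau^*+A\vec0$ only in (the closure of) one basic tile rather than straddling two. Concretely, the risk is an intersection that is a union of a $\tau_i$-face and part of a $\tau_{d+1}$-footprint; ruling this out requires checking that the $\alpha$ vs. $1-\alpha$ splitting in each coordinate lines up, which is exactly where the specific entries $A_{i+1,i}=-\alpha$ and $A_{i,d}=1-\alpha$ are used. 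Once the finite list of relevant $\vecv$ is in hand this is a direct computation in each coordinate, case by case much as in the proof of Proposition~\ref{p:periodic}, so I would organize it the same way: fix the first coordinate $i$ in which $\vecv$ is nonzero, compute $(A\vecv)_i$ and $(A\vecv)_{i+1}$, and read off which basic-tile face of the top of $\tau^*$ the corresponding bottom face can possibly meet.
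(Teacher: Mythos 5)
Your plan is essentially the paper's proof: the paper simply exhibits the $d$ lattice vectors $\vec u_j=(\overbrace{0,\ldots,0}^{j-1},-1,\ldots,-1,1)^T$, computes $A\vec u_j$, and checks coordinate by coordinate that the base of each $\tau^*+A\vec u_j$ meets the top of $\tau^*$ in exactly the top of one basic tile — the same computation you propose, organized by exhibiting the relevant $\vec v$ rather than solving for them. One small correction to your setup: the top boundary of $\tau^*$ is not only the height-$2$ slab $\bigl([0,1)^{d-1}\setminus[1-\alpha,1)^{d-1}\bigr)\times\{2\}$ but also the piece $[1-\alpha,1)^{d-1}\times\{2-\alpha\}$ (the top of the translated $\tau_d$ at the notch), so your decomposition of the top face as a union over $i=1,\ldots,d$ all at height $2$ is off for $i=d$, and the translate covering that piece ($\tau^*+A\vec u_d$, with bottom at height $2-\alpha$) must be treated as a separate case — this is precisely the notch bookkeeping you flag, and it works out as you expect.
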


\begin{proof} It suffices to consider the case that $\vec v=0$.
 For $1\le j\le d$, let 
 $$\vec u_j=(\overbrace{0,\ldots,0}^{j-1},{-1},-1,\ldots,-1,1)^T.$$
Then by direct calculation, we have 
\begin{equation}\label{eq:Auj}
\begin{split}
A\vec u_j&=(\overbrace{1-\alpha,\ldots,1-\alpha}^{j-1},{-\alpha},0,\ldots,0,2)^T\text{ for $j<d$; and}\\
A\vec u_d&=(1-\alpha,\ldots,1-\alpha,2-\alpha)^T.
\end{split}
\end{equation}

The top of $\tau^*$ is 
$$
\big(\big([0,1)^{d-1}\setminus[1-\alpha,1)^{d-1}\big)\times\{2\}\big)\cup\big([1-\alpha,1)^{d-1}\times \{2-\alpha\}\big).
$$
Notice that by \eqref{eq:Auj}, for $j<d$, $\tau^*+A\vec u_j$ has base 
$$
[1-\alpha,2-\alpha)^{j-1}\times [-\alpha,1-\alpha)\times [0,1)^{d-j-1}\times \{2\}.
$$
Intersecting these with the top of $\tau^*$, we obtain
$$
[1-\alpha,1)^{j-1}\times [0,1-\alpha)\times [0,1)^{d-j-1}\times\{2\},
$$
which is exactly the top of $\tau_j+(\overbrace{1-\alpha,\ldots, {1-\alpha}}^{j-1},0,\ldots,0,1)^T$,
one of the component basic tiles forming the supertile.

The base of $\tau^*+A\vec u_d$ is 
$[1-\alpha,2-\alpha)^{d-1}\times\{2-\alpha\}$. The intersection with the top of $\tau^*$ is
$[1-\alpha,1)^{d-1}\times\{2-\alpha\}$, which is the top of $\tau_d+(1-\alpha,\ldots,1-\alpha,1)^T$.

Hence we have found a collection of $d$ translates of the supertile at the origin, 
whose bases intersect the supertile at the origin exactly in the tops of all the $d$ basic tiles
forming the top of  the supertile and thus the $\tau^*+A\vec u_j$ are the only translates of the supertile whose bottoms can intersect the top of $\tau^*$.
 
 \end{proof}

\begin{definition}
Let $R\subset\mathbb R^d$, $\CT$ be a collection of $d$-dimensional tiles, and $y\in Y_\CT$.  The restriction of the tiling given by $y$ to the region $R$ is denoted by $y[R]$ and is called a {\it patch}.  The patch $\mathcal N[R]$  is called a {\it grid patch}, and for $y\in Y_{\CT^*}$, if $y[R]=\mathcal N[R]$, then $y[R]$ is said to be {\it grid tiled}. 

If for $\vec v\in\mathbb R^d$, $y[R]=\left(S_{\vec v}(y')\right)[R]$, then we say that $y[R]=y'[R+\vec v]$.
\end{definition}

\begin{definition}
Given a finite subset $S\subset\mathbb Z^d$ and the matrix $A$ defined 
in~\eqref{allA},  define  $R(S)\subset\mathbb R^d$ by 
\begin{equation*}
R(S)=\bigcup_{\vec v\in S}(\tau^*+A\vec v).
\end{equation*}

For $K,M\in\mathbb N$, set
 \begin{equation}\label{special}
R_{K,M}=R(\{\vec v\in \mathbb Z^d\colon -K\le v_i<M+K\}) 
\end{equation}
and denote $R_{0,M}=R_M$. 

\end{definition}

We use the Tile Intersection Property to shift any grid patch in $\mathcal N$ down one unit and tile $\mathbb R^d$ while only changing $\mathcal N$  slightly:
\begin{prop}\label{p:shift}
For any $M\in\mathbb N$ and $\vec w\in A\mathbb Z^d$, 
 there exists a tiling $y\in Y_{\mathcal T^*}$ such that  
\begin{equation*}
y[(R_{1,M}+\vec w)^c]=\mathcal N[(R_{1,M}+\vec w)^c] \text{ and  }y[R_M+\vec w-\vec e_d]= \mathcal N[R_{M}+\vec w].
\end{equation*}
\end{prop}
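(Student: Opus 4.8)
The plan is to build the tiling $y$ by starting from $\mathcal N$, identifying the supertiles whose bottom faces lie on the plane $\{x_d = 0\}$ inside the region $R_M + \vec w$ (equivalently, inside $R_M$ after translating by $-\vec w$, since $\vec w \in A\mathbb Z^d$ preserves the notched-cube lattice), shifting exactly those supertiles down by $\vec e_d$, and then checking that the result is still a valid $\mathcal T^*$-tiling that agrees with $\mathcal N$ outside $R_{1,M} + \vec w$. Since translation by $\vec w$ is a symmetry of $\mathcal N$, I would reduce immediately to the case $\vec w = 0$ and prove that there is $y \in Y_{\mathcal T^*}$ with $y[(R_{1,M})^c] = \mathcal N[(R_{1,M})^c]$ and $y[R_M - \vec e_d] = \mathcal N[R_M]$.

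The construction: let $S = \{\vec v \in \mathbb Z^d : 0 \le v_i < M\}$, so $R_M = R(S) = \bigcup_{\vec v \in S}(\tau^* + A\vec v)$. Define $y$ to be the tiling that places $\tau^* + A\vec v - \vec e_d$ for each $\vec v \in S$, and $\tau^* + A\vec v$ for each $\vec v \in \mathbb Z^d \setminus S$. I need to verify two things. First, disjointness of the placed supertiles: the translates $\{\tau^* + A\vec v : \vec v \notin S\}$ are pairwise disjoint and tile $(R_M)^c$ with the notched-cube tiling, and I must show that $R_M - \vec e_d$ (the union of the shifted-down supertiles) fits exactly into the ``hole'' $R_M$ without overlapping any of the unshifted supertiles. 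Second, covering: the shifted region together with the unshifted region covers all of $\mathbb R^d$. Both of these amount to the statement that $R_M - \vec e_d = R_M$ as sets up to the boundary adjustments on top and bottom — which is false as literal equality, so the real content is that the discrepancy between $R_M$ and $R_M - \vec e_d$ is absorbed exactly by the supertiles adjacent on the bottom and on the top. This is where the Tile Intersection Property (Lemma~\ref{lem:intersect}) does the work.

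More precisely, here is how I would organize the verification. Consider the bottom of $R_M$, i.e.\ the supertiles $\tau^* + A\vec v$ for $\vec v \in S$ whose bottom faces sit at height $0$ (these are the ones with $v_d$ minimal, but actually every supertile in the notched cube has a well-defined bottom face, and the relevant ones are those whose bottom face is \emph{not} covered by another supertile of $R_M$). When we shift $\tau^* + A\vec v$ down by $\vec e_d$, its new top face is where its old bottom face was; by Lemma~\ref{lem:intersect} applied in reverse (the bottom of $\tau^* + A\vec v$ meets the tops of the $d$ supertiles $\tau^* + A(\vec v - \vec u_j)$), the newly vacated space above the shifted supertiles is exactly filled by the still-present supertiles in $R_M$ or by the adjustments from neighboring shifted supertiles. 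Dually, at the bottom, the shifted supertile's new bottom face lands on the top of the supertile directly below, which lies outside $S$ precisely when... — and here I need $R_{1,M}$ rather than $R_M$ in the ``$c$'' statement, because the layer of supertiles immediately below and above $R_M$ (the ``$K=1$'' collar) are the ones that see a modified neighbor, even though they themselves are not moved; the Tile Intersection Property guarantees that a supertile sitting on top of $R_M$'s boundary, which lost its support, now rests on the top of the shifted supertile below, and the fit is exact because the intersection pattern of tops and bottoms is rigid.

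The main obstacle I expect is the careful bookkeeping at the lateral boundary of $R_M$ and at the interface between shifted and unshifted supertiles \emph{within} the collar: I must confirm that after shifting down a rectangular block of supertiles indexed by $S$, the interfaces on the four (or $2d$) sides still consist of matching faces. The Tile Intersection Property is stated only for top/bottom intersections, so for the side interfaces I would argue separately that $\tau^*$ tiles by $A\mathbb Z^d$-translates means the side faces of $\tau^* + A\vec v$ and $\tau^* + A\vec v'$ for $\vec v, \vec v'$ differing in a non-$d$ coordinate match up in a way that is unaffected by a common vertical shift — i.e.\ shifting an entire $v_d$-column down by $\vec e_d$ changes nothing about the column's interaction with other columns, and since $S$ is a union of full segments in the $v_d$-direction intersected with a box in the other coordinates, the only genuinely new interfaces are the top and bottom ones, handled by Lemma~\ref{lem:intersect}. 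Wait — I should double check whether $S$ really is a union of full $v_d$-columns: with $S = \{0 \le v_i < M\}$ it is a box, finite in all directions, so the bottom ($v_d = 0$), top ($v_d = M-1$), and all four lateral sides need attention; the lateral sides reduce to the column-shift argument just sketched, and the top/bottom reduce to Lemma~\ref{lem:intersect}. Assembling these, $y$ is a well-defined tiling in $Y_{\mathcal T^*}$ agreeing with $\mathcal N$ outside the collar $R_{1,M}$ and equal to $\mathcal N[R_M]$ shifted down by $\vec e_d$ on $R_M - \vec e_d$, which is the claim.
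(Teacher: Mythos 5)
There is a genuine gap. Your construction --- place $\tau^*+A\vec v-\vec e_d$ for $\vec v\in S$ and $\tau^*+A\vec v$ for $\vec v\notin S$, using only whole supertiles --- is not a tiling, and the claims you use to absorb the discrepancy are false. The supertile has $\vec e_d$-extent $2$, so shifting it down by $\vec e_d$ does \emph{not} put ``its new top face where its old bottom face was''; it puts the new top one unit below the old top. Hence the slab $R_M\setminus(R_M-\vec e_d)$ of $\vec e_d$-thickness $1$ at the top of $R_M$ is covered by nothing (every supertile of $R_M$ has been moved down, so there are no ``still-present supertiles in $R_M$'' to fill it), and at the bottom the shifted supertiles penetrate one unit \emph{into the interiors} of the unmoved supertiles below $R_M$, creating a $d$-dimensional overlap of positive volume --- the new bottom face does not ``land on the top of the supertile directly below.'' No repair using whole supertiles is possible: a translate of $\tau^*$ can never help fill a slab of $\vec e_d$-thickness $1$ without disturbing more of the tiling, which is exactly why the proposition is stated for $Y_{\mathcal T^*}$ rather than $Y_{\{\tau^*\}}$.

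The missing idea, which is the content of the paper's proof, is that the two discrepancy regions must be handled by breaking supertiles into basic tiles. At the bottom, each collar supertile that the shifted patch intrudes into is decomposed into its basic tile components and the covered components are discarded; this is legitimate precisely because the Tile Intersection Property (Lemma~\ref{lem:intersect}) guarantees that the covered portion is a union of \emph{complete} basic tiles. At the top, the vacated thickness-$1$ slab is tiled from scratch: it splits into rectangles whose cross-sections are tops of basic tiles, and each is filled either by a single $\tau_i$ with $i<d$ (which has $\vec e_d$-extent $1$) or by a $\tau_d$ with a $\tau_{d+1}$ stacked on it (extents $1-\alpha$ and $\alpha$). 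Your reduction to $\vec w=0$, your observation that the lateral interfaces cause no volumetric overlap, and your identification of Lemma~\ref{lem:intersect} as the key tool are all sound; but the lemma's role is to license the basic-tile surgery at the bottom interface, not to make the gap and the overlap disappear.
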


\begin{proof}
Fix $M$ and $\vec w$.  It suffices to show that if the patches $\left(R_{1,M}+\vec w\right)^c$ and $R_{M}+\vec w-\vec e_d$ are both grid tiled, then the region $\left(R_{1,M}+\vec w\right)\setminus \left(R_M+\vec w-\vec e_d\right)$ can be tiled using tiles of $\CT^*$.  By the Tile Intersection Property (Lemma~\ref{lem:intersect}) we know that if if $\tau^*+A\vec v$ lies in the set $(R_{1,M}+\vec w) \setminus (R_M+\vec w)$ and $\tau^*+A\vec v$ intersects $R_M+\vec w-\vec e_d$, then the part of $\tau^*+A\vec v$ that is covered by more than one tile consists of a complete union of basic tiles.
The bottom of the untiled region can thus can be tiled by decomposing these  $\tau^*+A\vec v$ in $\mathcal N$ into their basic tile components and removing the basic tiles that lie in the intersection. The top of the untiled region can be decomposed into $d$-dimensional rectangles with their $d$th dimension $1$ and the remaining dimensions corresponding to tops of basic tiles $\tau_1,\ldots,\tau_d$.  The tiling $y$ is completed by tiling each such rectangle either by $\tau_1,\ldots,\tau_{d-1}$ or by the pair $\tau_d,\tau_{d+1}$ stacked on top of one another.

%
 \end{proof}
By restricting our choice of $\alpha$, 
the ability to move a grid patch down in the $\vec e_d$ direction suffices to guarantee that there is a tiling which sees a grid patch arbitrarily close to any desired location:

\begin{cor}\label{c:move} {\em ($\mathcal N$ is a uniform filling set)} Suppose $\alpha$ is an irrational which is not algebraic of degree $d$ or lower.  Then given $\epsilon>0$, there exists $K\in\mathbb N$ such that for any $\vec v\in\mathbb R^d$  and any  $M$, there exists $\vec u\in\mathbb R^d$ with 
$\Vert\vec v-\vec u\Vert<\epsilon$ and a tiling $y\in Y_{\mathcal T^*}$ such that 
\begin{equation*}
 y[R_{M}+\vec u]= \mathcal N[R_M] \text{ and } y[\left(R_{2K,M}+\vec u\right)^c]=\mathcal N[\left(R_{2K,M}+\vec u\right)^c].
\end{equation*}
 \end{cor}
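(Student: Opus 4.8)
The plan is to use Proposition~\ref{p:shift} repeatedly to transport a grid patch downward, one $\vec e_d$-unit at a time, and to exploit the irrationality hypothesis on $\alpha$ to arrange that after sufficiently many such downward shifts, combined with translation by lattice vectors $A\vec u$, the cumulative displacement of the grid patch can be made to fall within $\epsilon$ of any prescribed $\vec v$. First I would observe that the set $A\Z^d$ of lattice translations is available for free: applying $S_{A\vec u}$ to $\mathcal N$ just reindexes the supertiles, so combining translations by $A\Z^d$ with the single-unit downward move of Proposition~\ref{p:shift} shows that for each $n\in\N$ and each $\vec w\in A\Z^d$ there is a tiling agreeing with $\mathcal N$ outside a bounded enlargement of $R_M+\vec w$ and agreeing with a copy of $\mathcal N[R_M]$ placed at $\vec w - n\vec e_d$. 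The displacements achievable this way are exactly the points of the form $A\vec u - n\vec e_d$ with $\vec u\in\Z^d$, $n\in\N$.

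The key number-theoretic step is then to show that $\{A\vec u - n\vec e_d : \vec u\in\Z^d,\ n\in\Z_{\ge 0}\}$, or more precisely $A\Z^d + \Z\vec e_d$, is dense in $\R^d$; indeed since $\vec e_d$ is itself a direction and $A$ is invertible, $A\Z^d + \R\vec e_d$ already projects to $\R^d/A\Z^d$, and density of $A\Z^d + \Z\vec e_d$ in $\R^d$ is equivalent to density of the image of $\Z\vec e_d$ in the torus $\R^d/A\Z^d$, i.e. to the orbit of $0$ under translation by $A^{-1}\vec e_d$ being dense in $\R^d/\Z^d$. By Weyl's criterion this holds precisely when the coordinates of $A^{-1}\vec e_d$ together with $1$ are rationally independent. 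Here I would compute $A^{-1}\vec e_d$: by Cramer's rule its $i$th coordinate is a cofactor of $A$ divided by $\det A = 2-\alpha^d$, and a direct expansion shows these cofactors are (up to sign) $1, \alpha, \alpha^2,\dots,\alpha^{d-1}$, so that $A^{-1}\vec e_d = (2-\alpha^d)^{-1}(1,\alpha,\dots,\alpha^{d-1})^T$ up to signs. A rational relation among $1$ and these coordinates would give a polynomial relation $c_0(2-\alpha^d) + c_1 + c_2\alpha + \cdots + c_d\alpha^{d-1} = 0$ with integer $c_i$ not all zero, i.e. a nonzero integer polynomial of degree $\le d$ vanishing at $\alpha$; the hypothesis that $\alpha$ is irrational and not algebraic of degree $d$ or lower rules this out exactly. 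Hence the relevant orbit is equidistributed, and in particular $\epsilon$-dense, in the torus.

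From density I would extract uniformity: cover the torus $\R^d/A\Z^d$ by finitely many $\epsilon/2$-balls and, since the orbit of $0$ under $A^{-1}\vec e_d$ is dense, choose a single $N$ so that $\{n(A^{-1}\vec e_d) : 0\le n\le N\}$ meets every such ball; then the number of downward shifts needed is bounded by $N$ independently of $\vec v$, and the enlargement constant $K$ produced by iterating Proposition~\ref{p:shift} at most $N$ times (each application widening the modified region by a bounded amount in each coordinate) can be taken uniform in $\vec v$ and $M$. Concretely, given $\vec v$ I pick $n\le N$ and $\vec u\in\Z^d$ with $\|A\vec u - n\vec e_d - \vec v\| < \epsilon$, set $\vec u' := A\vec u - n\vec e_d$ playing the role of the $\vec u$ in the statement, compose $n$ applications of Proposition~\ref{p:shift} (each preceded by the appropriate lattice translation, which does not enlarge the modified region) to obtain a tiling $y\in Y_{\CT^*}$ with $y[R_M+\vec u'] = \mathcal N[R_M]$ and $y$ equal to $\mathcal N$ outside $R_{2K,M}+\vec u'$, where $2K$ absorbs the $n\le N$ successive enlargements. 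I expect the main obstacle to be the bookkeeping in this last paragraph: verifying that the successive applications of Proposition~\ref{p:shift} compose cleanly — that each shift's ``outside'' agreement region contains the region that the next shift needs to be grid tiled — and that the total enlargement stays within a bound depending only on $N$ (hence only on $\epsilon$ and $d$), not on $M$ or $\vec v$. The number theory is clean once the cofactor computation is done; the geometric iteration is where care is required.
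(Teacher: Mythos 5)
Your proposal follows essentially the same route as the paper: density of the multiples of $A^{-1}\vec e_d$ in $\R^d/\Z^d$ (which is exactly what the hypothesis on $\alpha$ guarantees), compactness to obtain a $K$ uniform in $\vec v$ and $M$, and then $k\le K$ iterated applications of Proposition~\ref{p:shift} combined with a lattice translation, with the paper handling your flagged bookkeeping concern via the Tile Intersection Property, which gives $R_{j+1,M}\supset\bigcup_{t\in[0,1]}(R_{j,M}+t\vec e_d)$ so that all modifications stay inside $R_{2K,M}+\vec u$. One immaterial slip: the relevant cofactors are $\alpha^i-1$ (for $1\le i\le d-1$) and $1$ rather than $\pm\alpha^i$, but they span the same rational space together with $1$, so your reduction to a nonzero integer polynomial of degree at most $d$ vanishing at $\alpha$ goes through unchanged.
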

\begin{proof}
Fix $\epsilon>0$. The condition on $\alpha$ guarantees that $A^{-1}\vec e_d$
satisfies $\vec n\cdot A^{-1}\vec e_d$
is irrational for all non-zero $\vec n\in\Z^d$, and so  multiples
of $A^{-1}\vec e_d$ are dense in $\R^d/\Z^d$.
By compactness of $\tau^*$, there exists $K>0$ depending only
on $\epsilon$ such that any vector in $\R^d$ can be approximated within
$\epsilon$ (mod $A\Z^d$) by $-k\vec e_d$ for some $k$ with $0\le k\le K$.

Choosing such $k$ with  $0 \leq k\le K$ and $-k\vec e_d$ approximating $\vec v$ within
$\epsilon$ (mod $A\Z^d$), we have that there exists $\vec w\in A\mathbb Z^d$ 
satisfying $\Vert\vec v-\vec u\Vert<\epsilon$, where  $\vec u=\vec w-k\vec e_d$.
Applying Proposition~\ref{p:shift} inductively, there exists a sequence of
tilings $(y^{(j)})_{1\le j\le k}$ such that
\begin{align*}
y^{(j)}[\vec w+R_M-j\vec e_d] &=  \mathcal N[\vec
w+R_M-j\vec e_d]\text{; and}\\
y^{(j)}[\vec w+R_{j,M}^c]&=\mathcal N[\vec w+R_{j,M}^c].
\end{align*}

In particular, $y=y^{(k)}$ satisfies $y[\vec
u+R_M]= \mathcal N[\vec u+R_M]$.
An immediate 
consequence of the Tile Intersection Property 
(Lemma~\ref{lem:intersect}) is that  for any $j,M\in\mathbb N$,
$$
R_{j+1,M}
\supset \bigcup_{t\in[0,1]}(R_{j,M}+t\vec e_d).
$$
Thus $R_{2K,M}\supset
\bigcup_{t\in[0,1]}(R_{K,M}+t\vec e_d)$ and so 
$\vec w+R^c_{K,M}\supset \vec u+R^c_{2K,M}$. It follows that
$y[\vec u+R^c_{2K,M}]=\mathcal N[\vec u+R^c_{2K,M}]$, as required.
%
\end{proof}

To prove Theorem~\ref{thm:main}, we need two generalizations of  Corollary~\ref{c:move}.  First, we shift patches that are not perfect grid patches but are surrounded by a sufficiently large layer of supertiles.  Further, we shift two patches independently  provided they and their supertile collars do not intersect.

\begin{definition}
Let $y\in Y_{\mathcal T^*}$ and $K,M\in\mathbb N$.  We say the patch $y[R_M]$ has a {\em grid collar of size $K$} if
$y[R_{K,M}\setminus R_M]$ is grid tiled.  
\end{definition}

\begin{cor}\label{c:mov2}
Suppose $\alpha$ is an irrational which is not algebraic of degree $d$ or lower.   For all $\epsilon>0$ there exists $K\in\mathbb N$ such that for any finite collection $\vec v_1,\ldots,\vec v_k\in\mathbb R^d$,  $M_1,\ldots,M_k\in\mathbb N$, $y_i\in Y_{\mathcal T^*}$ where
 \begin{equation}\label{eq:disjnt}
 \text{the sets }R_{2K+1,M_i}+\vec v_i \text{ are pairwise disjoint}
 \end{equation}
and the patches $y_i[R_{M_i}]$ have a grid collar of size at least $2K$, there exist $\vec u_1,\ldots, \vec u_k\in\mathbb R^d$ with
\begin{equation}\label{e:small}
\Vert\vec u_i-\vec v_i\Vert<\epsilon\qquad\text{for $i=1,\ldots, k$}
\end{equation}
and a tiling $y\in Y_{\mathcal T^*}$ with 
\begin{equation*}
y\left[R_{M_i } +\vec u_i\right]= y_i\left[R_{M_i }\right] 
\end{equation*}
and
\begin{equation*}
y\left[\left(\bigcup_{i=1}^kR_{K,M_i}+\vec u_i\right)^c\right]=\mathcal N\left[\left(\bigcup_{i=1}^kR _{K,M_i}+\vec u_i\right)^c\right]
\end{equation*}
for all $i=1,\ldots,k$.
\end{cor}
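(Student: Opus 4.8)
The plan is to reduce Corollary~\ref{c:mov2} to repeated applications of Corollary~\ref{c:move}, the only subtlety being that now we must move several patches at once, the patches need only be grid-\emph{collared} rather than perfect grid patches, and the grid collars must survive the translations. First I would fix $\epsilon>0$ and let $K$ be the constant produced by Corollary~\ref{c:move} for this $\epsilon$; I claim the same $K$ (after the harmless adjustments in the displayed regions) works here. The key observation is that the proof of Corollary~\ref{c:move} is \emph{local}: the tiling $y$ it produces is obtained from $\mathcal N$ by finitely many applications of Proposition~\ref{p:shift}, each of which alters $\mathcal N$ only inside a region of the form $R_{1,M'}+\vec w'$ for a lattice vector $\vec w'$, and in our situation all these alteration regions lie inside $R_{2K,M_i}+\vec v_i$ up to the $\epsilon$-translation. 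Since by hypothesis~\eqref{eq:disjnt} the enlarged regions $R_{2K+1,M_i}+\vec v_i$ are pairwise disjoint, the perturbations performed near different indices $i$ have disjoint supports and hence do not interfere; we may therefore run the construction of Corollary~\ref{c:move} independently around each $\vec v_i$ and glue the results, obtaining a single tiling $y\in Y_{\mathcal T^*}$ and vectors $\vec u_i$ with $\Vert\vec u_i-\vec v_i\Vert<\epsilon$.

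The second point is handling the grid collar in place of a perfect grid patch. Here I would not start the inductive shifting from $\mathcal N$ itself, but from a tiling that already agrees with $y_i$ on $R_{M_i}$ and with $\mathcal N$ outside $R_{2K,M_i}$; such a tiling exists precisely because $y_i[R_{M_i}]$ has a grid collar of size at least $2K$, so the annulus $R_{2K,M_i}\setminus R_{M_i}$ is grid tiled in $y_i$ and matches $\mathcal N$ there, letting us splice $y_i$ inside and $\mathcal N$ outside along a grid-tiled interface. Applying Proposition~\ref{p:shift} to \emph{this} tiling, exactly as in the proof of Corollary~\ref{c:move}, shifts the collared patch down by $-k\vec e_d$ while only modifying the configuration inside $R_{2K,M_i}+\vec v_i$ (translated by the accumulated shift); the collar of size $2K$ is exactly what guarantees that each invocation of Proposition~\ref{p:shift} has a full grid patch to shift and that the untiled annulus it creates can be filled by basic tiles as in that proof. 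After $k$ steps one is left with a grid patch of the required size at the shifted location and agreement with $\mathcal N$ outside $R_{K,M_i}$ at that location, by the same containment $R_{2K,M}\supset\bigcup_{t\in[0,1]}(R_{K,M}+t\vec e_d)$ used at the end of Corollary~\ref{c:move}.

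Combining the two ingredients: I would choose, for each $i$, an integer $k_i$ with $0\le k_i\le K$ so that $-k_i\vec e_d$ approximates $\vec v_i$ within $\epsilon$ modulo $A\Z^d$, pick $\vec w_i\in A\Z^d$ with $\vec u_i:=\vec w_i-k_i\vec e_d$ satisfying $\Vert\vec u_i-\vec v_i\Vert<\epsilon$, and then simultaneously perform the collared-shift construction around each $\vec w_i+R_{M_i}$, using disjointness~\eqref{eq:disjnt} to see that the finitely many perturbation regions across all $i$ are pairwise disjoint and hence define a single consistent tiling $y$. The conclusions $y[R_{M_i}+\vec u_i]=y_i[R_{M_i}]$ and $y[(\bigcup_i R_{K,M_i}+\vec u_i)^c]=\mathcal N[(\bigcup_i R_{K,M_i}+\vec u_i)^c]$ then follow by collecting the per-index statements.

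The main obstacle, and the place I would spend the most care, is bookkeeping the supports of the perturbations: I must verify that each application of Proposition~\ref{p:shift} in the $i$th induction really does leave the tiling unchanged outside $R_{2K,M_i}+\vec v_i$ (using the containment of shifted $R_{j,M}$'s in $R_{j+1,M}$, the fact that $k_i\le K$, and the $\epsilon$-closeness of $\vec u_i$ to $\vec v_i$ to absorb the translation into the ``$+1$'' margin of $R_{2K+1,M_i}$), so that the disjointness hypothesis genuinely decouples the indices. Everything else is a routine reprise of the proofs of Proposition~\ref{p:shift} and Corollary~\ref{c:move}.
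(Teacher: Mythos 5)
Your proposal is correct and follows essentially the same route as the paper: the paper likewise fixes lattice points $\vec w_i$, invokes the proof of Corollary~\ref{c:move} independently around each $\vec v_i$ (disjointness~\eqref{eq:disjnt} decoupling the perturbation supports), and uses the size-$2K$ grid collar to splice $y_i[R_{M_i}]$ consistently. The only cosmetic difference is the order of operations --- the paper shifts pure grid patches and substitutes $y_i[R_{M_i}]$ at the very end, whereas you splice $y_i$ in first and then shift --- but both hinge on the same locality and collar arguments.
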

\begin{proof}
Let $\vec w_i\in A\mathbb Z^d$ be such that $\vec v_i\in \tau^*+\vec w_i$.   By \eqref{eq:disjnt} we can apply the proof of Corollary~\ref{c:move} independently  to the patches $\mathcal N[R_{M_i}+\vec w_i]$ in $\mathcal N$.  We complete the proof by tiling each  $R_{M_i}+\vec u$ by   $y_i[R_{M_i}]$.
\end{proof}

\subsection{Proof of Theorem~\ref{thm:main}}\label{subsec:mainpf}
The strategy that we follow in this section is a minor variation of strategies that have appeared in works of many authors, notably Rudolph \cite{RudolphRd},\cite{R2} (in which rectangular tilings of $\mathbb R^d$ appeared) and Robinson and \c Sahin \cite{RS4} (in which uniform filling sets were introduced).  

The tiling is constructed using a sequence of Rohlin towers.  For ease of notation we abbreviate an $\mathbb R^d$ action $\{T_{\vec v}\}_{\vec v\in\mathbb R^d}$ by $T$.

\begin{definition}
Let $(X,\mu,T)$ be a measurable, free, and measure preserving $\mathbb R^d$ action on a Lebesgue probability space. Let $R\subset \mathbb R^d$ and let $B\subset X$ be measurable with $\mu(B)>0$ such that $T_{\vec v_1} B\cap T_{\vec v_2}B=\emptyset$ for all distinct $\vec v_1,\vec v_2\in R$.  Then the set $\bigcup_{\vec v\in R}T_{\vec v}(B)$ is called the {\em Rohlin tower} for $T$ of {\em shape} $R$, with {\em base} $B$.  The set $E=X\setminus \bigcup_{\vec v\in R}T_{\vec v}(B)$ is called the {\em error set}, and $\mu(E)$ is called the {\em error} of the tower.  For $B'\subset B$, the set $\bigcup_{\vec v\in R}T_{\vec v}(B')$ is called the {\it slice of the tower based at $B'$}.
\end{definition}

Lind extended the classical Rohlin Lemma to $\mathbb R^d$ actions \cite{Lind} , showing that given any size $d$-dimensional rectangle in $\mathbb R^d$, any $\epsilon>0$, and any free, measure preserving $\mathbb R^d$ action there is a Rohlin tower of that shape with error $\epsilon$.  The following result can be obtained from Lind's $\mathbb R^d$ Rohlin Lemma, or from Ornstein and Weiss's generalization of the Rohlin Lemma to amenable group actions \cite{OrnsteinWeiss}: 
 \begin{lem}\label{lem:tower}
Let $(X,\mu,T)$ be a measurable, free, and measure preserving $\mathbb R^d$ action on a Lebesgue probability space.   Let  $A$ be the matrix defined in (\ref{allA}) and $R_M\subset\mathbb Z^d$ be as defined in  (\ref{special}).  
Then for all $\epsilon>0$ and $M\in\mathbb N$, there exists a Rohlin tower for $T$ of shape $R_M$ with error at most $\epsilon$.
\end{lem}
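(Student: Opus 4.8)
The plan is to deduce Lemma~\ref{lem:tower} from an existing Rohlin-type lemma for $\mathbb R^d$ actions, so the main task is bookkeeping: translating the geometric shape $R_M$ into something to which the known results apply. First I would recall that $R_M = R(\{\vec v\in\mathbb Z^d : 0\le v_i < M\})$ is the union of $M^d$ translates $\tau^*+A\vec v$ of the supertile, and that by Proposition~\ref{p:periodic} these translates are pairwise disjoint (up to boundaries) with total volume $M^d\det A = M^d(2-\alpha^d)$. In particular $R_M$ is a bounded region whose translates $R_M + A\vec w$, $\vec w\in\mathbb Z^d$, tile $\mathbb R^d$ up to boundaries — that is, $R_M$ is (essentially) a fundamental domain for the lattice $A(M\mathbb Z^d)$. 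The strategy is: get a Rohlin tower whose base $B$ has the property that $\{T_{A\vec v}B : \vec v\in\mathbb Z^d,\ 0\le v_i<M\}$ are disjoint, then observe that since $R_M$ differs from the union of the cubes $A\vec v + A[0,1)^d$ only on a measure-zero boundary, the disjointness of the $T_{\vec v}B$ for distinct $\vec v\in R_M$ follows, possibly after discarding a null set from $B$.

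The cleanest route is via Lind's $\mathbb R^d$ Rohlin Lemma as cited: apply a linear change of coordinates. Let $L = T\circ A$ denote the $\mathbb R^d$ action $\vec v\mapsto T_{A\vec v}$; this is again a free, measure-preserving $\mathbb R^d$ action on $(X,\mu)$, because $A$ is an invertible linear map and $T$ is free and measure preserving. Lind's lemma applied to $L$ with the rectangle $[0,M)^d$ and error parameter $\epsilon$ gives a measurable set $B$ with $\mu(B)>0$ such that $L_{\vec s}B \cap L_{\vec t}B = \emptyset$ for distinct $\vec s,\vec t\in[0,M)^d$ and $\mu\bigl(\bigcup_{\vec s\in[0,M)^d} L_{\vec s}B\bigr)\ge 1-\epsilon$. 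Unwinding the definition of $L$, this says $T_{\vec v_1}B\cap T_{\vec v_2}B=\emptyset$ whenever $\vec v_1,\vec v_2\in A[0,M)^d$ are distinct. Now I would note $A[0,M)^d \supseteq A\{\vec v : 0\le v_i < M\} + A[0,1)^d$ covers, in particular, a full-measure subset of $R_M$: indeed, each cube $A\vec v + A[0,1)^d$ with $0\le v_i<M$ lies in $A[0,M)^d$, and $R_M$ equals $\bigcup_{0\le v_i<M}(\tau^*+A\vec v)$, where each $\tau^*+A\vec v$ agrees with $A\vec v+A[0,1)^d$ off a boundary null set — wait, this needs care, since $\tau^*$ is two cubes tall.

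The honest statement is that $R_M$ is the disjoint union (up to boundaries) of the $M^d$ supertile translates, and each supertile $\tau^*$ is built from two unit cubes minus a corner; the relevant point is simply that $R_M$, having volume $M^d\det A$, is a fundamental domain for $A(M\mathbb Z^d)$ and hence can be written (mod null sets) as $\bigcup_{\vec v\in F} A\vec v + N(\vec v)$ for a suitable finite set $F\subset\mathbb Z^d$ of size $M^d\det A$... but $\det A$ is not an integer, so this decomposition by integer translates is exactly what fails and is precisely why the supertile is not a union of unit cubes over $\mathbb Z^d$. So instead I would invoke the Ornstein–Weiss version directly, which is the cleaner option and is already flagged in the statement: the set $R_M$ is a fixed bounded, positive-measure subset of $\mathbb R^d$ whose lattice translates $\{R_M + A\vec w\}_{\vec w\in M\mathbb Z^d}$ tile $\mathbb R^d$ up to measure zero (this is immediate from Proposition~\ref{p:periodic}, since $R_M$ is exactly $M^d$ copies of the fundamental domain $\tau^*$ arranged along $A\{0,\dots,M-1\}^d$, a fundamental domain for $A(M\mathbb Z^d)$). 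The Ornstein–Weiss Rohlin lemma for the amenable group $\mathbb R^d$ (or Lind's lemma phrased for such tiling regions) then directly yields, for any $\epsilon>0$, a measurable base $B$ with $T_{\vec v_1}B\cap T_{\vec v_2}B=\emptyset$ for all distinct $\vec v_1,\vec v_2\in R_M$ and $\mu(X\setminus\bigcup_{\vec v\in R_M}T_{\vec v}B)<\epsilon$, which is the assertion. I expect the only genuine obstacle to be stating precisely which published formulation is being quoted and checking that $R_M$ qualifies as an admissible tile/shape for it — a verification that amounts to the fundamental-domain observation above together with boundedness and measurability of $R_M$, both clear. The rest is notation-matching, so I would keep the proof to a few lines: identify $R_M$ as $M^d$ copies of $\tau^*$, cite Proposition~\ref{p:periodic} for the tiling property, and invoke \cite{Lind} or \cite{OrnsteinWeiss}.
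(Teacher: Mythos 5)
Your proposal is correct and matches the paper, which likewise gives no independent argument for this lemma but simply derives it from Lind's $\mathbb R^d$ Rohlin Lemma or the Ornstein--Weiss amenable Rohlin Lemma; your added observation that $R_M$ is a fundamental domain for the lattice $A(M\Z^d)$ (so its lattice translates tile $\R^d$) is exactly the verification that makes those citations applicable, and you correctly abandon the first attempt via the parallelepiped $A[0,M)^d$, which is a different fundamental domain and does not contain $R_M$.
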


\begin{proof}{\em (Of Theorem~\ref{thm:main})}
 Fix an $\alpha$ satisfying the 
hypotheses of Corollary~\ref{c:move} and a measure 
preserving system $(X,\mu,T)$ that satisfies the hypotheses of the theorem.  We prove that $T$ is $\mathcal T_\alpha^*$ tileable.  The result then follows from Lemma~\ref{decomp}.

We give the proof by inductively constructing a sequence of Rohlin towers and a sequence of measurable maps $\Phi_n$ mapping elements of the tower to elements of  $Y_{\mathcal T_\alpha^*}$.  Let $K_n$ be an increasing sequence obtained by applying Corollary~\ref{c:mov2} with $\epsilon=2^{-n}$ for each $n$.   Let $M_n$ be an increasing sequence with $M_n\rightarrow\infty$ and $\epsilon_n$ a decreasing sequence with $\epsilon_n\rightarrow 0$ be chosen such that 
\begin{equation}\label{e:fast}
\frac{\vol\left(R_{M_n}\right)}{\vol\left(R_{2K_n+2K_{n-1}+M_{n-1},M_n}\right)}(1-\epsilon_n)>1-2^n.
\end{equation}
Construct a sequence of Rohlin towers with base sets $B_n$ of shape $R_{2K_n,M_n}$ and error $\epsilon_n$.  Then by \eqref{e:fast} 
\begin{equation}\label{eq:Jn}
\mu\left(\bigcup_{\vec v\in J_n} T_{\vec v}B_n\right)>1-2^{-n}
\end{equation}
where $J_n=R_{-(2K_{n-1}+M_{n-1}),M_n} $ denotes those elements $\vec v\in R_{M_n}$ such that an entire ball whose diameter equals that of $R_{2K_{n-1},M_{n-1}}$ lies in $R_{M_n}$.  
 
If \eqref{eq:Jn} holds, then by the easy direction of the Borel-Cantelli Lemma we have that
\begin{equation}\label{BC}
\text{$\mu$-a.e. $x$ belongs to $\bigcup_{\vec v\in J_n}T_{\vec v}B_n$ for all but finitely many $n$}.
\end{equation}

The maps $\Phi_n$ are constructed such that if $x\in B_n$  then the following conditions hold:

\begin{gather}
\text{$\Phi_n(x)[R_{K_n,M_n}]$  has a grid collar of size $K$,}\label{g:tile}\\
\text{for $\vec u\in R_{M_n}$,  $\Phi_n(T^{\vec u}x)=S^{\vec u}(\Phi_n(x))$,}\label{g:factor}
\end{gather}
and finally if the slice of the Rohlin tower based at $x$ lies in $C_{n+1}$, the core of the $(n+1)$ stage tower, then both $\Phi_n$ and $\Phi_{n+1}$ tile this slice and we require that these tilings agree up to a small translation.  More formally, suppose $x=T^{\vec u} y$ for some $y\in B_{n+1}$ and $\vec u\in J_{n+1}$.   Then
 \begin{equation}\label{e:agree}
\Phi_n(x)[{R_{M_n}}]\text{ and } \Phi_{n+1}(y)[{R_{M_n}+\vec u}]\text{ agree up to a translate of $2^{-n}$}.
\end{equation}

Assuming that these properties are established, we finish the proof by observing that by \eqref{BC},\eqref{g:tile}, and \eqref{e:agree} for $\mu$-almost every $x$ the sequence $\Phi_n(x)$ converges to a tiling $\Phi(x)\in Y_{\mathcal T_\alpha^*}$.   By \eqref{g:factor}, $\Phi$ is a factor map from $T$ onto the translation action $S$ on $Y_{\mathcal T_\alpha^*}$.  It therefore suffices to show we can build a sequence of maps satisfying these conditions.

For $x\in B_1$ we set $\Phi_1(x)=\mathcal N$ and we extend the definition to $y=T^{\vec v}x$ for $\vec v\in R_{M_1}$ by setting $\Phi_1(y)=S^{\vec v}(\Phi(x))$ satisfying (\ref{g:tile}) and (\ref{g:factor}) with $n=1$.

Now suppose that a sequence of maps satisfying \eqref{g:tile},\eqref{g:factor}, and \eqref{e:agree} have been defined up to stage $n$ and fix $x\in B_{n+1}$. To define $\Phi_{n+1}(x)$ we let $\vec v_i\in R_{M_{n+1}}$ denote those locations for which $T_{\vec v_i}x\in B_n$.  Since these are slices of towers stage $n$ towers, the regions $R_{2K_n+1,M_n}+\vec v_i$ satisfy the conditions of Corollary~\ref{c:mov2} with $y_i=\Phi_n(x)[R_{M_{n}}]$.  The tiling $\Phi_{n+1}(x)$ is then given by Corollary~\ref{c:mov2} and \eqref{e:small} yields that \eqref{e:agree} is satisfied at this stage.
\end{proof}

\section{The structure of 2-tilings}
\label{sec:2-tilings}

\subsection{Invariant measures on tilings of $\R^2$: basic observations}

In this section, we consider tilings of $\mathbb R^2$ by rectangles.  Let $\CT$ be a finite collection of rectangular tiles and for $A\in\CT$
let $w_A$ denote the width of $A$ and $h_A$ denote its height.  
Once $\mathcal T$ is fixed, we omit it from the notation 
and refer to $Y_{\mathcal T}$ as $Y$.
By an 
invariant measure on $Y$, we mean a measure invariant under the standard action of $\mathbb R^2$ on $Y$ by translation.

We start with some general properties of invariant measures 
on $Y_{\mathcal T}$, and
then specialize to the two tile case where the widths and heights of the two tiles are rationally independent. 

\begin{definition}
Two tiles in a tiling $y\in Y$ are said to meet \emph{full face to full face}
if a face of one tile exactly agrees with a face of the other.
In this case, the common face is called a {\em shared edge} of the tiling.
A \emph{shear} is a maximal line segment (horizontal or vertical) consisting entirely of
faces of tiles that contains no shared edges. 
A shear is \emph{semi-infinite} if it is a half line and \emph{bi-infinite} if it is a full line.
\end{definition}

An invariant measure almost surely rules out semi-infinite shears:
\begin{lem}\label{lem:nosemiinf}
	Let $\mu$ be an invariant probability measure on $Y$.
	Then $\mu$-almost every element of $Y$ contains no semi-infinite shear.
\end{lem}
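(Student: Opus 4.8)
The plan is to argue by contradiction using the invariance of $\mu$ together with the Poincar\'e recurrence theorem, exploiting the fact that a semi-infinite shear, being a half-line of positive ``density'' of tile faces lined up along an axis, cannot recur under translation. First I would fix a direction, say horizontal, and suppose for contradiction that the set $H$ of tilings $y\in Y$ containing a horizontal semi-infinite shear has $\mu(H)>0$. A horizontal semi-infinite shear has an endpoint: it is a ray $\{(x,y_0): x\ge x_0\}$ (or $x\le x_0$) lying entirely in faces of tiles with no shared edge along it. I would first reduce to a measurable, translation-compatible description: the event ``the positive $x$-axis is contained in a horizontal shear whose left endpoint is the origin'' (and its reflections/rotations) is a Borel subset of $Y$, and any tiling with a semi-infinite shear has a translate realizing such an event, so by invariance it suffices to rule out this normalized event having positive measure — or, more cleanly, to work with the $\mathbb R$-subaction in the direction along the shear.

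Next, the key geometric observation: along a semi-infinite horizontal shear the tile faces on (say) the upper side form an infinite sequence of horizontal edges, each of length at least $\min_{A\in\CT} w_A =: w_{\min}>0$, tiled edge-to-edge along the line but with their vertical extents above the line being heights of tiles, hence bounded below by $h_{\min}>0$; similarly below. The crucial point is a \emph{counting/density} argument: if $y$ has a semi-infinite horizontal shear starting at the origin and going to the right, then for the $\mathbb R$-action $S_{(t,0)}$, the tilings $S_{(t,0)}y$ for $t>0$ all still contain a semi-infinite horizontal shear (the same ray, shifted), so the ``semi-infinite shear to the right through height $0$'' event, intersected appropriately, would have to be invariant-ish along the orbit in a way incompatible with recurrence of the \emph{left endpoint}. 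More precisely: consider the set $D\subset Y$ of tilings for which the origin is the left endpoint of a rightward semi-infinite horizontal shear. Translating $D$ by $S_{(-t,0)}$ for $t>0$ moves the endpoint to $(t,0)$, and these translates are pairwise disjoint (a tiling can have at most one shear through any given horizontal line, and its left endpoint is a single point), yet each has the same $\mu$-measure by invariance; since there are uncountably many of them inside a fixed bounded ``window'' only after we localize, the right move is to integrate: I would show $\mu(D)=0$ by a Fubini argument, pushing the one-dimensional Lebesgue measure of the endpoint location against $\mu$, and then conclude $\mu(H)=0$ because every element of $H$ has a translate in $\bigcup (\text{rotations/reflections of } D)$ — but $D$ and its images are $\mu$-null, and $H$ is contained in the $\mathbb R^2$-saturation of this null set, hence null by invariance.

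Concretely, the cleanest implementation: let $D_r$ be the event ``there is a rightward semi-infinite horizontal shear whose left endpoint has first coordinate in $[0,1)$ and lies on a horizontal line at height in $[0,1)$.'' This is Borel and bounded-window, and $H$ is contained in $\bigcup_{\vec v\in\mathbb Z^2} S_{\vec v}(D_r')$ over the finitely many orientation types, where $D_r'$ is the analogous full-unit-square-window event; so $\mu(H)>0$ forces $\mu(D_r')>0$. Now within $D_r'$, the location $(p(y),q(y))$ of the endpoint is a measurable function into $[0,1)^2$; consider the map $y\mapsto S_{(p(y),q(y))}y$ and the pushforward — alternatively, for each tiling in $D_r'$ the map $t\mapsto S_{(t,0)}y$ sends it, for $t\in(-1,0]$ avoiding the point $t=-p(y)$... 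I would instead invoke: the sets $\{y: p(y)\in[0,\tfrac1n)\}\cap D_r'$ have measure tending to $\mu(D_r' \cap \{q\in[0,1)\})$... The most robust route avoiding these index bookkeeping issues is a direct \emph{Poincar\'e recurrence} argument: the $\mathbb R^2$-action restricted to the horizontal $\mathbb R$-subgroup is measure preserving, and a tiling with a rightward semi-infinite horizontal shear with endpoint at $0$, when translated by $S_{(-1,0)}, S_{(-2,0)},\dots$, never returns to having its endpoint in $[0,1)\times[0,1)$ — because the endpoint marches off to $-\infty$ (the shear only lengthens) — contradicting recurrence for the positive-measure set $D_r'$ under $S_{(-1,0)}$. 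This last version is what I would write up.

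\textbf{Main obstacle.} The real work is the \emph{measurability and normalization} step: verifying that ``contains a semi-infinite shear'' and ``is the left endpoint of a semi-infinite shear'' are genuinely Borel subsets of $Y$ in whatever topology on tilings is being used (presumably the local/patch topology), that the endpoint location is a measurable function, and that a tiling possessing a semi-infinite shear indeed has a translate in the normalized event — together with the geometric lemma that translating \emph{away} from a semi-infinite shear's endpoint only extends it (so the endpoint genuinely escapes to infinity and never recurs). The dynamical core (Poincar\'e recurrence, or the disjointness-of-translates volume bound) is then short; I expect no difficulty there. A secondary subtlety is handling both orientations and both directions (left/right, up/down) uniformly — this is routine, handled by the finite union over orientation types — and ensuring the argument does not accidentally exclude bi-infinite shears, which have no endpoint and to which the recurrence obstruction does not apply (consistent with Theorem~\ref{thm:zero}'s dichotomy, which explicitly allows bi-infinite shears).
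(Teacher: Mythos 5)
Your overall strategy---normalize to a windowed event (the left endpoint of a rightward semi-infinite horizontal shear lies in a fixed small box), use invariance to reduce to showing this event is null, and get nullity from disjointness of translates / Poincar\'e recurrence---is exactly the paper's. But your ``cleanest implementation'' has a genuine gap at the decisive step. The event $D_r'$ is existential (\emph{some} shear has its endpoint in the window), whereas your contradiction tracks the endpoint of \emph{one particular} shear: you argue that this endpoint marches off and never re-enters the window. That does not contradict recurrence of $D_r'$, because after translation a \emph{different} semi-infinite shear could have its endpoint in the window; and with your window $[0,1)\times[0,1)$ this really can happen when the tiles have height less than $1$ (two parallel rightward semi-infinite shears at heights $0$ and $0.5$, say, are perfectly consistent with short tiles). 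Your parenthetical ``at most one shear through any given horizontal line'' only handles coinciding heights, which is why it sufficed for the measure-zero set $D$ but not for the fattened window.

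The missing ingredient is the geometric fact that two \emph{distinct} rightward semi-infinite horizontal shears in one tiling must lie at heights differing by at least $\min_{A\in\CT}h_A$: at equal heights they would share a right half-line and hence coincide, and at unequal heights the tiles sitting on the lower shear have height at least $\min_A h_A$, so far enough to the right their interiors cross the upper line. The paper therefore takes the window $[0,1)\times[0,r)$ with $r<\min_{A\in\CT}h_A$; then the translates $S_{n\vec e_1}E$ are pairwise disjoint, since a tiling lying in two of them would contain two distinct such shears with vertical separation less than $r$. Infinitely many disjoint translates of equal measure in a probability space are null, and a countable union of translates of $E$ (over the finitely many orientations) covers every tiling with a semi-infinite shear. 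Note also that the measurability issues you flag as the ``main obstacle'' are routine and are not where the content of the proof lies; the content is this vertical-separation estimate.
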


This lemma is essentially the Poincar\'e recurrence theorem.  
As usual, we let 
$\vec e_1, \ldots, \vec e_d$ denote the standard basis elements of $\mathbb R^d$.  

\begin{proof}
	It suffices to show that $\mu$-almost every element of $Y$ contains no semi-infinite horizontal
	shear that is infinite to the right (the other directions being established identically).
	Let $r<\min_{A\in\mathcal T} h_A$ and let $E$ be the set of tilings such that a semi-infinite right-pointing
	horizontal shear has its left endpoint in $[0,1)\times[0,r)$. 

	The sets $S_{n\vec e_1}E$ are disjoint, as if a tiling lies in two of these sets, 
	then it has distinct semi-infinite rightward shears  (necessarily at different heights)
	with a vertical separation that is less than $r$. This is impossible since all tiles have height exceeding $r$.
	Since the sets have the same measure, they must all be of measure 0. We conclude the proof 
	by taking a union of countably many translates of $E$.
	
\end{proof}

We say that an invariant measure $\mu$ on $Y$ is {\em ergodic in direction $\vec v\in\R^2$} if 
$\mu$ is ergodic with respect to the action of $(S_{t\vec v}\colon t\in\R)$. 
A measure that is ergodic in the coordinate directions
${\vec e_1}$ and ${\vec e_2}$ has no infinite shears:

\begin{lem}\label{lem:nobiinf}
	Let $\mu$ be an invariant probability measure on $Y$. If $\mu$ is 
	ergodic in the two coordinate directions, then $\mu$-almost every element
	of $Y$ contains no infinite shears.
\end{lem}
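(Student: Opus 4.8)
The statement asserts that if $\mu$ is ergodic in the directions $\vec e_1$ and $\vec e_2$, then $\mu$-almost every tiling has no bi-infinite shear (by Lemma~\ref{lem:nosemiinf} we already know there are no semi-infinite ones, so ``infinite shear'' means ``bi-infinite shear''). The plan is to show that the presence of a bi-infinite horizontal shear is an event invariant under the flow $(S_{t\vec e_1})_{t\in\R}$, hence by ergodicity in direction $\vec e_1$ it has measure $0$ or $1$, and then rule out measure $1$; the vertical case is symmetric.

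\textbf{Key steps.} First I would fix the direction: let $H$ be the set of tilings $y\in Y$ that contain at least one bi-infinite horizontal shear. Since a bi-infinite horizontal shear is a full horizontal line and the set of such lines in a fixed tiling is unchanged under horizontal translation, $S_{t\vec e_1}H=H$ for all $t\in\R$. By the assumed ergodicity in direction $\vec e_1$, $\mu(H)\in\{0,1\}$. Second, I would rule out $\mu(H)=1$ using a recurrence/separation argument in the \emph{vertical} direction, exactly parallel to the proof of Lemma~\ref{lem:nosemiinf}: pick $r<\min_{A\in\mathcal T}h_A$, and let $E$ be the set of tilings possessing a bi-infinite horizontal shear whose height lies in $[0,r)$. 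The sets $S_{n\vec e_2}E$ (or more carefully, a countable family of translates $S_{(0,jr)}E$, $j\in\Z$) are pairwise disjoint: if a tiling lay in two of them it would have two distinct horizontal bi-infinite shears whose vertical separation is strictly less than $r$, forcing some tile — which has a horizontal shear on both its top and bottom, or is cut by one — to have height less than $r$, a contradiction. Since these translates all have the same $\mu$-measure and are disjoint inside a probability space, $\mu(E)=0$; and $H=\bigcup_{j\in\Z}S_{(0,jr)}(E')$ for the analogous $E'$ (shears at height in $[jr,(j+1)r)$), a countable union of null sets, so $\mu(H)=0$. Third, run the identical argument with the roles of the two coordinates exchanged to handle bi-infinite vertical shears, using ergodicity in direction $\vec e_2$ and a horizontal separation bound $r<\min_{A}w_A$. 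Finally, take the union of the two null sets.

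\textbf{Anticipated obstacle.} The only subtle point is the disjointness step: I need to make sure that ``two bi-infinite horizontal shears at vertical distance less than $r$'' really is impossible given that all tiles have height at least $\min_A h_A > r$. The cleanest way to see this is to observe that strictly between two parallel bi-infinite horizontal shears at heights $s_1<s_2$ with $s_2-s_1<r$, the region $\R\times(s_1,s_2)$ must be covered by tiles, and any tile meeting this open strip has its top face and bottom face at heights that are \emph{not} both outside the strip — but a tile of height $\ge\min_A h_A>r>s_2-s_1$ cannot fit with both horizontal faces avoiding the interior of the strip unless one of those faces lies along a shear, which contradicts either the definition of a shear (it contains no shared edges, but it could still be a tile boundary) — so I must phrase this correctly. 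Actually the right statement is: a horizontal shear at height $s$ is composed of tile faces, so immediately above it and immediately below it there are tiles; a tile immediately below the shear at height $s_2$ has its top at height $s_2$ and hence its bottom at height $s_2-h_A\le s_2-\min_A h_A<s_2-r<s_1$, so it strictly crosses the shear at height $s_1$, meaning that at height $s_1$ the horizontal line passes through the \emph{interior} of a tile, contradicting that it is a shear (which is made up of tile faces). This is the argument I would write out carefully; everything else is bookkeeping and the symmetric repetition.
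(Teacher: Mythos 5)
There is a genuine gap, and it sits exactly where you place the weight of the argument: the claim that the translates $S_{(0,jr)}E$, $j\in\Z$, are pairwise disjoint is false. If a tiling lies in $S_{(0,jr)}E$ and $S_{(0,kr)}E$ with $j\ne k$, it has bi-infinite horizontal shears at heights in $[jr,(j+1)r)$ and $[kr,(k+1)r)$; their vertical separation is on the order of $|j-k|\,r$, i.e.\ bounded \emph{below}, not above by $r$, so nothing prevents a tiling from lying in infinitely many of these sets. Indeed a tiling built from full horizontal rows of $A$'s, with consecutive rows generically offset, has a bi-infinite horizontal shear at every row boundary and belongs to $S_{(0,jr)}E$ for infinitely many $j$. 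A useful sanity check that this step cannot be repaired as stated: your steps (b)--(c) never invoke ergodicity, so if they were correct they would prove the lemma for \emph{every} invariant measure --- but the paper exhibits (in Proposition~\ref{prop:ptswshears} and the surrounding discussion) ergodic invariant measures supported on row tilings for which almost every point has infinitely many bi-infinite horizontal shears. So the directional ergodicity must enter the proof of $\mu(E)=0$ itself, not merely as the zero--one dichotomy for the global event $H$ (which, being invariant under \emph{all} translations, cannot be shown to have measure less than $1$ by any disjointness argument).

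The paper's fix is small and is essentially a re-aiming of your own first step: the localized set $E$ (shear at height in $[0,r)$) is itself invariant under the horizontal flow, since horizontal translation does not change the height of a bi-infinite horizontal shear. Ergodicity in direction $\vec e_1$ then gives $\mu(E)\in\{0,1\}$, and $\mu(E)=1$ is ruled out because $E$ is disjoint from the \emph{single} vertical translate $S_{r\vec e_2}E$ of equal measure (two disjoint sets of equal measure in a probability space each have measure at most $1/2$); here your ``anticipated obstacle'' computation --- a tile sitting under the upper shear must pierce the lower one --- is exactly the right justification of that one disjointness, though to make it airtight for shears at heights in $[0,r)$ and $[r,2r)$ you should take $r<\tfrac12\min_{A\in\mathcal T}h_A$, since the separation is only bounded by $2r$. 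With $\mu(E)=0$ in hand, your step (c) (a countable union of null translates, no disjointness needed) is fine, as is the symmetric vertical case.
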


\begin{proof}
	We have already ruled out semi-infinite shears so it remains to rule out bi-infinite shears.
	By symmetry, it suffices to exclude bi-infinite horizontal shears. 
	Let $r<\min_{A\in\mathcal T}h_A$. The subset $W$ of $Y$ consisting of points such that a bi-infinite 
	horizontal shear enters $[0,r)^2$ is disjoint from $S_{r\vec e_2}W$, but has the same measure,
	which must therefore be less than 1. Since $W$ is invariant under the horizontal action, it follows that $W$
	has measure 0. Taking countably many translates shows the set of points 
	with bi-infinite horizontal shears has measure 0.
\end{proof}

\subsection{Independent heights and widths}
For the remainder of this section, we specialize
to the case that $\mathcal T$ consists of two basic tiles, $A$ and $B$ of dimensions
$w_A\times h_A$ and $w_B\times h_B$ respectively.  We write $Y_{A,B}$ for the 
collecting of tilings using $A$ and $B$, and as before omit the subscripts on $Y$ 
when the context is clear.  

\begin{lem}\label{lem:incommens}
Suppose that $A$ and $B$ are tiles such that $Y_{A,B}$ supports a measure that is ergodic
in the two coordinate directions. Then the heights of $A$ and $B$ are incommensurable, as are the widths of the
tiles.
\end{lem}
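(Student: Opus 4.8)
The plan is to suppose, for contradiction, that the two heights are commensurable, say $h_A/h_B = p/q$ with $p,q$ coprime positive integers, and derive a periodicity that contradicts the existence of an invariant measure ergodic in the vertical direction. Set $h = h_A/p = h_B/q$, the largest common sub-height. First I would observe that in any tiling $y \in Y$, consider the set of $y$-boundaries lying on horizontal lines. By Lemma~\ref{lem:nosemiinf} and Lemma~\ref{lem:nobiinf}, for $\mu$-a.e.\ $y$ there are no semi-infinite or bi-infinite horizontal shears, so every maximal horizontal segment of tile-boundary is either a bounded shear or is built from shared edges, and crucially every horizontal boundary segment of $y$ extends (via shared edges, since there are no infinite shears to block it) — this needs care, but the upshot I want is a well-defined notion of the horizontal lines that carry \emph{any} boundary.

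The key step is then a mod-$h$ rigidity statement: in any tiling by $A$ and $B$, all horizontal tile-boundaries lie on a single coset of $h\Z$ in $\R$; equivalently, the vertical coordinate of every horizontal edge is congruent mod $h$ to that of any other. To see this, pick a tile $t$ in the tiling; its top and bottom faces differ in height by $h_A$ or $h_B$, each a multiple of $h$, so the top and bottom of any single tile are congruent mod $h$. Now any two tiles can be joined by a finite chain of tiles, each consecutive pair sharing at least part of a horizontal or vertical face; walking along such a chain and using that a vertical face shared between two tiles forces their horizontal boundaries (where they touch) to be compatible, one propagates the mod-$h$ congruence across the whole tiling. (This is where the absence of infinite shears is used: infinite shears would allow an unbounded vertical slip and break the chain argument; one must check the chain can always be routed around bounded shears.) I would isolate this as the main geometric claim and expect it to be the principal obstacle — making the "chain of tiles" argument rigorous, in particular verifying that a.e.\ tiling is "connected" in the required sense and that one never needs to cross an infinite shear.

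Granting the rigidity claim, the conclusion is quick. For $\mu$-a.e.\ $y$, let $c(y) \in \R/h\Z$ be the common residue mod $h$ of the horizontal boundaries of $y$ (with the convention $c(y) = $ anything, say $0$, on the measure-zero set of tilings with no horizontal boundary, which in fact cannot occur since no tiling of $\R^2$ by two distinct rectangles avoids all horizontal edges). Then $c$ is a measurable map $Y \to \R/h\Z$, and $c(S_{t\vec e_2} y) = c(y) - t \pmod{h}$, so $c$ conjugates the vertical flow on $(Y,\mu)$ to an irrational-or-rational rotation on $\R/h\Z$; either way the vertical action has $\R/h\Z$ as a nontrivial factor, hence is not ergodic in the direction $\vec e_2$, contradicting the hypothesis. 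The argument for widths is identical after exchanging the roles of the coordinate axes. This completes the proof.
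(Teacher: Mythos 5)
Your overall strategy is the transposed version of the paper's own argument (the paper runs it for widths, you for heights), but there are two genuine problems as written.

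First, the propagation mechanism in your ``mod-$h$ rigidity'' step is wrong. Two tiles sharing part of a \emph{vertical} face impose no constraint at all between the heights of their horizontal boundaries modulo $h$: the right-hand tile can sit at essentially any vertical offset relative to the left-hand one, so your phrase ``a vertical face shared between two tiles forces their horizontal boundaries to be compatible'' is false. The only adjacency that propagates the congruence is a shared part of a \emph{horizontal} face (stacking), where the bottom of one tile equals the top of the other and hence differs from its bottom by $h_A$ or $h_B$, both multiples of $h$. The real content is therefore that a.e.\ tiling is connected under stacking adjacency alone, and this is exactly where the absence of bi-infinite shears enters: the paper shows (for the transposed relation) that an equivalence class consists of the tiles whose projections onto one axis meet a fixed open interval $J$, and that if $J$ is a proper interval its two boundary lines are bi-infinite shears, excluded a.e.\ by Lemma~\ref{lem:nobiinf}. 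Your chain argument, by admitting vertical-face adjacencies, ``connects'' everything trivially and proves nothing.

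Second, your concluding step is a non sequitur. The map $c\colon Y\to\R/h\Z$ intertwines the vertical flow with the translation flow $x\mapsto x-t$ on the circle, and that flow \emph{is} ergodic for Lebesgue measure; having an ergodic factor is no obstruction to ergodicity, so there is no contradiction with ergodicity in the direction $\vec e_2$. The contradiction lives in the other direction: $c$ is \emph{invariant} under the horizontal flow $S_{t\vec e_1}$ (horizontal translation does not move horizontal edges), and since $c$ is equivariant under vertical translation its distribution is Lebesgue on $\R/h\Z$, hence non-constant; for instance $c^{-1}\bigl([0,h/2)\bigr)$ is a horizontally invariant set of measure $1/2$, contradicting ergodicity in the direction $\vec e_1$. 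This is precisely how the paper closes its width version of the argument. Both defects are repairable, but as stated the proof does not go through.
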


\begin{proof}
	Suppose for a contradiction that the widths of $A$ and $B$ are rationally related, so that
	$w_A=p\alpha$ and $w_B=q\alpha$ with $p,q\in\N$. 
	For this proof, we refer to the left and right boundaries of a tile $T$
	as the vertical boundary $\partial_v(T)$. 
	Fix a tiling $z\in Y$. We say that two tiles, $T$ and $T'$ in $z$ are adjacent if $\partial_v(T)\cap
	\partial_v(T')\ne\emptyset$. (Notice that this could happen in many ways: the left boundary of $T'$ could overlap
	with the right boundary of $T$; the lower left corner of $T'$ could be the
	upper left corner of $T$, etc.)
	We let $\sim$ be the transitive closure of the adjacency relation. If two tiles
	are related, then the $x$-coordinates of their left edges differ by an integer multiple
	of $\alpha$. For a tile $T$, let $\pi_2(T)$	be its projection onto the $y$-axis.
	
	We claim that if 
	$\text{int}(\pi_2(T))\cap \text{int}(\pi_2(T'))\ne\emptyset$, then $T\sim T'$. To see this, suppose 
	$\text{int}(\pi_2(T))\cap \text{int}(\pi_2(T'))$ is non-empty. The intersection is then an open
	(hence uncountable) set. Let $W$ denote the (countable) set of $y$ coordinates of 
	tops and bottoms of tiles. Thus there exists $y_0\in\text{int}(\pi_2(T))\cap \text{int}(\pi_2(T'))\setminus W$.
	Hence the tiles	on the line $y=y_0$ between $T$ and $T'$ are pairwise adjacent, so that $T$ and $T'$ lie
	in the same equivalence class as required.
	
	Fix a tile $T_0$ in  the tiling $z$ and let $J$ be the convex hull of 
	$\bigcup_{T\sim T_0}\text{int}(\pi_2(T))$.  Then $J$ is an open interval (possibly all of $\R$). 
	We claim that the equivalence class $[T_0]$ of $T_0$ consists precisely of those tiles $T$
	such that $\pi_2(T)\cap J\ne\emptyset$.
	If $T\sim T_0$, then by construction, $\pi_2(T)$ intersects $J$. 
	Conversely if $T$ is a tile such that $\pi_2(T)$ intersects $J$, then the intersection is uncountable
	since $J$ is open, and 
	so we can pick 	$y\in J\cap \pi_2(T)\setminus W$. By definition of $J$, there exists $T_1\in [T_0]$
	whose projection includes points below $y$ and $T_2\in[T_0]$ whose projection includes points above $y$.
	Since $T_1$ and $T_2$ lie in the same equivalence class, 
	there is a sequence of adjacencies connecting $T_1$ to $T_2$. 
	The sequence of adjacencies must include a tile $T'$ whose projection includes $y$. Notice that since 
	$y\not \in W$, $y$ necessarily lies in $\text{int}(\pi_2(T'))$. Now by the previous paragraph, we see that $T\sim T'$.
	Since $T'\sim T_0$, we deduce that $T\sim T_0$ as required.
	
	If $J$ is all of $\mathbb R$, then all tiles have left
	boundaries differing by multiples of $\alpha$. If $J$ is a proper sub-interval $(a,b)$, then the full lines
	$y=a$ and $y=b$ form bi-infinite horizontal shears.
	
	Now let $\mu$ be an invariant probability measure on $Y$ that is ergodic in the direction 
	of the coordinate axes.  By Lemma \ref{lem:nobiinf}, almost every point has no bi-infinite shears, so that
	almost every element of $Y$ has a single equivalence class. 
	Then the collection of elements of $Y$
	whose left endpoints lie in $\alpha\mathbb Z+[0,\alpha/2)$ is invariant under the vertical action, and has
	measure $1/2$, contradicting ergodicity in the directions of the coordinate axes.	
\end{proof}

\subsection{Patterns along shears}
\label{subsec:patterns-on-shears}
For the remainder of the section, we specialize to the case that the rectangular basic tiles $A$ and $B$ are \emph{incommensurable}, meaning that $A$ and $B$ have incommensurable
widths and heights.  

\begin{definition}
Define the \emph{staircase tilings} to be the
collection of tilings using rectangular basic tiles $A$ and $B$ that contain no semi-infinite 
and no bi-infinite shears.  We denote the collection of staircase tilings by $Y_{A,B}^\stair$.
\end{definition}
The motivation for this name will be apparent once we have described these tilings.  

\begin{lem}[ABA Lemma]\label{lem:aba}
	Let $A$ and $B$ be incommensurable and let $x$ be an element of $Y_{A,B}^\stair$.
	Let $\ell$ be a line segment (horizontal or vertical) consisting entirely of tile boundaries and consider the tiles that touch
	$\ell$ from one side. These tiles can have at most one transition between tile types along the segment: 
	from type $A$ to type $B$; or from type $B$ to type $A$.
\end{lem}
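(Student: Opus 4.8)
The plan is to argue by contradiction, supposing that along a line segment $\ell$ of tile boundaries there are at least two transitions among the tiles touching $\ell$ from, say, the left (the other side and the vertical case being symmetric). Having two transitions means we can find three consecutive tiles $T_1, T_2, T_3$ touching $\ell$ from the left, stacked in order of increasing height along $\ell$, with $T_1, T_3$ of one type and $T_2$ of the other; the hypothesis of incommensurability of $A$ and $B$ is what forces the ``$ABA$'' pattern rather than, say, $AAB$, since two $A$-tiles (or two $B$-tiles) adjacent along $\ell$ with a shared vertical edge on $\ell$ is fine, but a transition is precisely a point on $\ell$ where the tile type changes. So after relabeling we have an $A$, then a $B$, then an $A$ (in height order) all flush against $\ell$ from the left.

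The key geometric step is to track the vertical line $\ell$ (say $\ell$ is vertical, lying on the line $x = c$, and the tiles $T_1, T_2, T_3$ are to its left) and use incommensurability of the widths $w_A, w_B$. Each tile $T_i$ has its right edge on $\ell$, so its left edge is at $x = c - w_{A}$ or $x = c - w_B$ depending on type. Thus $T_1$ and $T_3$ have left edges at $x = c - w_A$, while $T_2$ has its left edge at $x = c - w_B \ne c - w_A$. Now I would follow the column of tiles immediately to the left of $T_1, T_2, T_3$: because $w_A \ne w_B$, the left edge of $T_2$ is offset from the left edges of $T_1$ and $T_3$, and this offset propagates. The goal is to show that the line segment $x = c - w_A$ (the common left edge of $T_1$ and $T_3$), when extended, can contain no shared edge — i.e. it is forced to be a semi-infinite or bi-infinite shear — because the tiles abutting it from the left cannot align in both the region adjacent to $T_1$ and the region adjacent to $T_3$ while skirting around the protruding tile $T_2$. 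Concretely, the presence of $T_2$ sticking out past the line $x = c - w_A$ means the tiles meeting that vertical line from the left between the heights of $T_1$ and $T_3$ are blocked from ever meeting it full-face-to-full-face in a way compatible with incommensurability; iterating produces an infinite sequence of such forced non-alignments, hence a semi-infinite shear, contradicting $x \in Y^{\stair}_{A,B}$.

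More carefully, I expect the cleanest route is: consider the shear (maximal boundary line segment with no shared edge) containing the right edges of $T_1, T_2, T_3$ — this is a sub-segment of $\ell$, and since $x$ is a staircase tiling it is finite, with two endpoints. At the top endpoint and bottom endpoint of this shear, something must ``turn the corner''. Using the $ABA$ pattern and incommensurability, I would show that the horizontal boundary segments emanating from the corners where $T_1$ meets $T_2$ and where $T_2$ meets $T_3$ (these are genuine corners since the left edges are misaligned) themselves cannot terminate: each is forced to be a semi-infinite horizontal shear, because to terminate it would need a tile on the far side whose vertical edge aligns, and incommensurability plus the geometry rules this out. This directly contradicts Lemma~\ref{lem:nosemiinf}'s conclusion as encoded in the definition of $Y^{\stair}_{A,B}$ (no semi-infinite shears).

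The main obstacle I anticipate is making the propagation argument rigorous without hand-waving: one has to show that the misalignment introduced by the protruding middle tile $T_2$ genuinely cannot be ``healed'' by any arrangement of $A$'s and $B$'s in the adjacent columns, and the natural tool is that any finite $\Z$-combination $m w_A + n w_B$ equals $0$ only when $m = n = 0$ (incommensurability), so no finite stack of tiles can realize a return to alignment. Packaging this as ``either a shared edge never occurs along a certain forced line (giving an infinite shear) or we get a strictly decreasing/increasing sequence of heights forcing the same conclusion'' is the delicate part. I would likely isolate a small sub-claim — that if two tiles of the same type abut a vertical line from the left at different heights, with a tile of the other type abutting strictly between them, then the horizontal line through the top of the middle tile (or through one of the corners) extends to a semi-infinite shear — and prove that sub-claim by a direct inspection of the finitely many local configurations, again invoking incommensurability to eliminate the configurations that would let the shear terminate.
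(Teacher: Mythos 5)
Your overall strategy is the one the paper uses: a double transition along $\ell$ traps a block of one tile type between tiles of the other, and one shows that this block is forced to replicate itself indefinitely in the direction perpendicular to $\ell$, producing a semi-infinite shear and contradicting $x\in Y_{A,B}^\stair$. But the core of the argument --- the propagation step --- is exactly what you leave open, and the devices you offer for closing it do not work as stated. First, your reduction to ``three consecutive tiles $T_1,T_2,T_3$'' of types $A,B,A$ is not available in general: two transitions give a pattern $A^jB^mA^k$, and if $m\ge 2$ no three consecutive tiles read $ABA$; the object that propagates is the whole $m$-tile block, not a single protruding tile. Second, your ``cleanest route'' via the shear containing the edges of $T_1,T_2,T_3$ presupposes that this portion of $\ell$ is a shear, but the lemma is stated (and later needed, in Corollary~\ref{cor:endofshear} and Proposition~\ref{prop:ptswshears}) for segments $\ell$ that may consist partly of shared edges, so that shear need not exist. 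Third, a ``direct inspection of the finitely many local configurations'' is not viable: the stacks of tiles flanking the block can be arbitrary finite words in $A$ and $B$, so there are infinitely many configurations to exclude and one needs a uniform arithmetic reason rather than a case check.

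The paper closes the gap with an induction that uses incommensurability twice, once in each coordinate direction. In its orientation ($\ell$ horizontal, tiles above, a block of $m$ $B$'s flanked by $A$'s): assuming $n$ full rows of $m$ $B$'s have been forced, the top of the $n$th row lies at height $nh_B$, while a stack of tiles placed on a flanking $A$ reaches height $h_A+jh_A+kh_B$; height-incommensurability shows $nh_B-h_A$ is never a nonnegative integer combination $jh_A+kh_B$, so neither flanking stack can ever have a tile whose top aligns with the top of the $B$ block. Consequently the flanking stacks have tiles whose interiors cross the line $y=nh_B$, so the row sitting immediately on top of the $B$ block must span exactly the width $mw_B$, and width-incommensurability then forces that row to consist of exactly $m$ more $B$'s. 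This is the uniform mechanism your sketch is missing: it is not that the offset ``cannot be healed'' in some unspecified sense, but that alignment parallel to the propagation direction is arithmetically impossible, which pins down the width of the next row, which pins down its tile types. With that induction in place, the boundary of the $B$ block is a semi-infinite shear, contradicting the definition of $Y_{A,B}^\stair$ as you intended.
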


\begin{proof}
	Without loss of generality, we consider the tiles lying above a horizontal line segment. Suppose 
	that an element of a tiling contains a block of $A$'s followed by a block of $B$'s followed by a block
	of $A$'s, all of whose bottoms lie along the horizontal line segment. Note that while this segment
	may be a shear, we also include the case that it is not a shear (this is important in Corollary~\ref{cor:endofshear} 
	and Proposition~\ref{prop:ptswshears}). We show inductively that the border between the $A$ and $B$ blocks is forced to propagate
	vertically to infinity, producing a semi-infinite shear and a contradiction of 
$x\in Y_{A,B}^\stair$.

	By hypothesis, there is at least a single row of $B$'s with $A$'s on either side sitting on the line segment.
	Assume there are $m$ consecutive $B$'s in the row.
	Suppose that we have established that there are $n$ consecutive rows of $B$'s one on top of another 
	(the base case $n=1$ being satisfied by assumption). This is illustrated in Figure \ref{fig:aba}.
	Whichever tiles in $x$ are at the left and right ends of the top row of $B$'s must meet the top of the $B$'s in
	the interior of the left and right boundaries of the tile. To see this, note that the vertical
	distance between the top of the $n$th row of $B$ tiles and the $A$ tile at the bottom is $nh_B-h_A$.
	By the incommensurability assumption, this is not a positive integer combination of $h_A$'s and $h_B$'s.  Thus there is no collection of tiles that can be put on top of the $A$ tile so as to have the same top boundary as
	the $n$th row of $B$ tiles. The row of tiles that sits on top of this row is
	then forced to be of width $mw_B$. Using the
	incommensurability again, the only collection of tiles that can exactly fill this in is $m$ $B$'s, so that
	one is forced to have $n+1$ consecutive rows of $B$'s,  completing the induction.
\end{proof}

\begin{figure}
\includegraphics{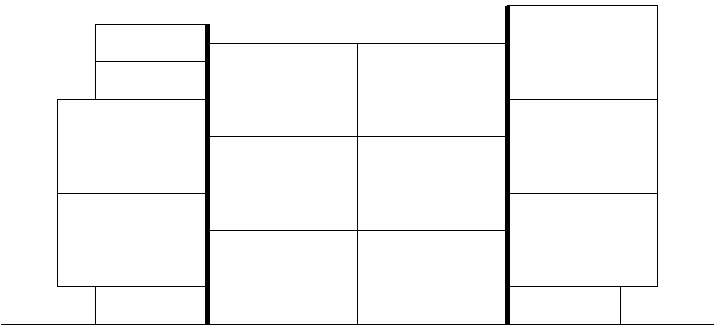}
\caption{Two changes of tile type along a segment generate a semi-infinite shear}
\label{fig:aba}
\end{figure}

\begin{prop}\label{prop:ptswshears}
	Let $A$ and $B$ be incommensurable rectangular basic tiles. If $\mu$ is an
	ergodic invariant measure on $Y_{A,B}$ such that $\mu$-almost every point has bi-infinite horizontal 
	shears, then $\mu$-almost every point consists of complete rows of $A$ tiles and complete rows of $B$
	tiles.
\end{prop}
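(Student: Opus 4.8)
The plan is to use the ABA Lemma together with the assumed bi-infinite horizontal shears to force global row structure. The key point is that a bi-infinite horizontal shear, read as a line segment consisting entirely of tile boundaries, has tiles meeting it from above and below, and by Lemma~\ref{lem:aba} the tiles touching it from a given side can make at most one transition between tile types; combined with ergodicity this will rule out any transition at all, leaving a complete bi-infinite row on each side.

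First I would fix the bottom-most bi-infinite horizontal shear through a neighborhood of the origin — more precisely, working with a set of positive measure on which a bi-infinite horizontal shear passes through $[0,r)\times\{y\}$ for some controlled $y$ — and apply the ABA Lemma to the tiles sitting immediately above it. The lemma says there is at most one transition $A\to B$ or $B\to A$ along the shear. Now I would argue that an invariant measure cannot see exactly one transition: the location of a single transition point along a bi-infinite line would give a measurable, translation-equivariant choice of a distinguished point on each shear, and pushing this around by the horizontal flow produces a contradiction with ergodicity in the horizontal direction (a set positively charged but with all its horizontal translates disjoint, as in the proof of Lemma~\ref{lem:nosemiinf}), or alternatively it contradicts Poincar\'e recurrence since the transition point drifts off to infinity under iteration. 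Hence $\mu$-almost surely the tiles above the shear are all of one type, i.e. form a complete bi-infinite row; the same argument applies to the tiles below.

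Next I would propagate this upward and downward. Having a complete row of (say) $A$ tiles sitting on the shear, its top edge is again a bi-infinite horizontal line consisting entirely of tile boundaries — a shear unless the row above happens to align, but in either case Lemma~\ref{lem:aba} applies to the tiles resting on it, so again they form a complete row of a single type. Iterating, every horizontal strip at heights obtained by stacking complete rows is itself a complete row; since the tiles have bounded height this exhausts all of $\mathbb{R}^2$, and almost every point consists entirely of complete rows of $A$ tiles and complete rows of $B$ tiles, as claimed.

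The main obstacle I expect is the measurability/ergodicity bookkeeping in the step that eliminates a single transition: one must be careful that "there is a bi-infinite horizontal shear near the origin" and "the tiles above it make a transition at a specified horizontal location" define genuinely measurable sets, and that the resulting distinguished point is equivariant enough to run the disjointness-of-translates argument cleanly. A secondary technical point is handling the possibility that a horizontal line which is the top of a complete row is \emph{not} a shear (the rows above and below align); the parenthetical remark in the statement of Lemma~\ref{lem:aba} shows the ABA Lemma is designed to cover exactly this case, so the induction step goes through without modification, but it is worth stating explicitly.
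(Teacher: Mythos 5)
Your proposal is correct and follows essentially the same route as the paper: apply the ABA Lemma to the row above a bi-infinite horizontal shear, rule out the single-transition case by the disjoint-translates/Poincar\'e recurrence argument from Lemma~\ref{lem:nosemiinf} applied to the horizontal action, and then propagate row by row using the fact that the top of a complete row is again a bi-infinite line of tile boundaries to which the ABA Lemma applies. The only cosmetic difference is that the paper first invokes the Birkhoff theorem to note that almost every tiling has infinitely many such shears, but the core mechanism is identical.
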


\begin{proof}
Consider the set of tilings in which a bi-infinite horizontal shear enters 
$[0,1)^2$. This set has positive measure; otherwise
by taking translates we contradict the hypothesis.  Applying the Birkhoff Ergodic 
Theorem, we deduce that $\mu$-almost every
tiling has infinitely many bi-infinite horizontal shears. 

By the ABA Lemma (Lemma~\ref{lem:aba}), in the row immediately above
each horizontal shear, there can be at most one change of tile type.

As in Lemma~\ref{lem:nosemiinf}, the set of tilings in which there is a point in $[0,1)^2$ such that all tiles to the left
are $A$'s and all tiles to the right are $B$'s has measure 0 by the Poincar\'e recurrence 
theorem applied to the horizontal action.
Taking a countable union of translates of this set, we see that in $\mu$-almost every tiling, 
there are no $a\in\R$ such that 
the tiles intersecting the line $y=a$ have exactly one type transition.

In particular, we deduce that in the row immediately above each horizontal shear, there is exactly one tile type. 
The top of this row is another infinite line segment consisting entirely of tile boundaries, so we can apply the
ABA Lemma again to this, and inductively deduce that the tiling consists of full rows of $A$ tiles and full rows of $B$ tiles
only.
\end{proof}

If $\mathcal T=\{A,B\}$ consists of incommensurable rectangular basic tiles, this completes the description of the ergodic invariant probability
measures on $Y_{\mathcal T}$ that have infinite shears. 

\subsection{Shears in staircase tilings}
\label{subsec:shears-in-staircase}

We start with a characterization of the shears:
\begin{lem}[Shear Structure]\label{lem:shearstruc}
	Let $A$ and $B$ be incommensurable rectangular basic tiles and let $x\in Y_{A,B}^\stair$.
	Along each (finite) shear in $x$, there is a non-empty sequence of $A$'s and a 
	non-empty sequence of $B$'s on one side; 
	on the other side the same number of $A$'s and the same number of $B$'s occur in the opposite order.
\end{lem}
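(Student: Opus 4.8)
The plan is to analyze a single finite shear $\ell$ in $x \in Y_{A,B}^\stair$ by looking at the tiles touching it from each side. Without loss of generality take $\ell$ horizontal, and consider the tiles sitting on top of $\ell$ (the ``upper side'') and those hanging below it (the ``lower side''). First I would invoke the ABA Lemma (Lemma~\ref{lem:aba}) on each side separately: along $\ell$, the tile types on the upper side make at most one transition, and likewise on the lower side. If the upper side were all $A$'s or all $B$'s, then $\ell$ would consist of faces of tiles of a single type on that side; combined with the fact that $\ell$ is a shear (no shared edges), this forces a semi-infinite or bi-infinite shear in essentially the same propagation style as the ABA Lemma argument, contradicting $x \in Y_{A,B}^\stair$. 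Hence each side of $\ell$ carries \emph{exactly} one transition, so on the upper side there is a non-empty run of one type followed by a non-empty run of the other, and similarly below.

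Next I would pin down the widths. Let $\ell$ have length $L$, and say the upper side reads $p$ copies of $A$ then $q$ copies of $B$ (so $L = p\,w_A + q\,w_B$), and the lower side reads $p'$ copies of one type and $q'$ copies of the other (in some order), so $L$ is also a positive-integer combination $p' w_A + q' w_B$ or $q' w_B + p' w_A$ of the two widths. By the incommensurability of $w_A$ and $w_B$, the representation of $L$ as a non-negative integer combination of $w_A$ and $w_B$ is unique; therefore $\{p',q'\} = \{p,q\}$ with $p$ copies of $A$ and $q$ copies of $B$ occurring below as well. It remains to show the order is reversed. Here the key point is the location of the single transition on each side. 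On the upper side the $A$-to-$B$ (or $B$-to-$A$) transition occurs at horizontal position $p\,w_A$ (resp.\ $q\,w_B$) measured from the left endpoint of $\ell$; if the lower side had its transition at the \emph{same} position and in the \emph{same} order, then the vertical boundary through that point would be a shared edge lying on $\ell$, contradicting that $\ell$ is a shear (which contains no shared edges). And if the orders agreed but the transition points differed, the unique-representation argument would be violated. So the orders must be opposite: if the upper side is ($p$ $A$'s, then $q$ $B$'s), the lower side is ($q$ $B$'s, then $p$ $A$'s).

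The step I expect to be the main obstacle is the first one: ruling out the case where one side of the shear is monochromatic (all $A$'s or all $B$'s along $\ell$). In that situation one must show the shear is forced to be infinite. The argument should mimic the inductive propagation in the ABA Lemma: if all tiles directly above $\ell$ are $A$'s (say $k$ of them, total width $k w_A$) while the tiles below make one transition (widths not a single multiple of either $w_A$ or $w_B$ unless trivial), then incommensurability prevents the boundary above from being ``healed'' by the row above it, propagating a semi-infinite shear upward — a contradiction. One has to be slightly careful that a finite shear does have \emph{some} tile on each end whose interior meets $\ell$ transversally (this is exactly what makes $\ell$ terminate rather than continue), and that this transversal meeting is what seeds the propagation; but this is precisely the mechanism already used in Lemma~\ref{lem:aba}, so the argument is a routine adaptation rather than a genuinely new idea.
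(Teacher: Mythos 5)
Your proposal is correct and follows essentially the same route as the paper: the ABA Lemma gives at most one type transition on each side of the shear, the unique representation of the shear's length as a nonnegative integer combination of $w_A$ and $w_B$ gives equal counts of $A$'s and $B$'s on the two sides, and the observation that identical order would make every face a shared edge forces the reversal. The only adjustment worth noting is that the monochromatic case you flag as the main obstacle requires no ABA-style propagation argument: if one side of the shear consisted only of $A$'s, unique representation would force the other side to consist of the same number of $A$'s, so the two sides would be perfectly aligned and the segment would consist entirely of shared edges, contradicting the definition of a shear (and the sub-case you propose to handle by propagation, one side monochromatic while the other has a genuine transition, is already impossible by that same length argument).
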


\begin{proof}
	Let $x\in Y_{A,B}^\stair$ and consider a shear in $x$, which we assume without
	loss of generality to be horizontal (and finite). By maximality, the continuation of the line segment making 
	up a shear either enters the interior of a tile at the endpoint; or continues as a shared edge (in fact this
	latter possibility is eliminated below). In particular the top side of the shear is completely filled
	out by tiles, as is the bottom side. Since these have exactly the same length, incommensurability
	implies that the top and bottom sides of the shear each have the same number of $A$ tiles and each have 
	the same number of $B$ tiles. Further, by the ABA Lemma, there can be at most one transition on each side 
	between $A$ tiles and $B$ tiles. If the $A$'s and $B$'s occurred in the same order on both sides, then 
	the shear would consist of shared edges. The remaining possibility is that the $A$'s and $B$'s occur in the
	opposite order (see Figure \ref{fig:shear}).
\end{proof}

\begin{figure}
\includegraphics{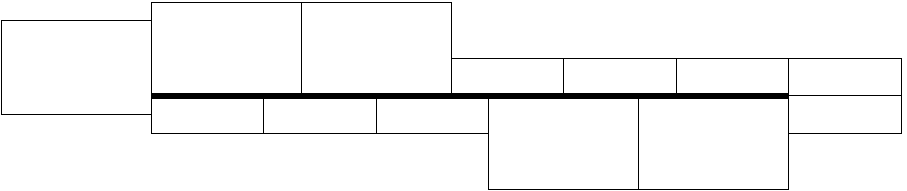}
\caption{Above the shear: 2 $A$'s and 3 $B$'s; Below the shear: 3 $B$'s and 2 $A$'s}
\label{fig:shear}
\end{figure}

\begin{cor}\label{cor:endofshear}
	Let $A$ and $B$ be incommensurable rectangular basic tiles and let $x\in Y_{A,B}^\stair$.
	For each shear in $x$, the continuation of the shear enters the interior of a tile.
	Hence each horizontal shear ends in an interior point of a vertical shear and vice versa.
	
	In particular, shears never cross.
\end{cor}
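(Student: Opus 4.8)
The plan is to leverage the Shear Structure Lemma (Lemma~\ref{lem:shearstruc}) together with the incommensurability hypothesis. First I would argue that the continuation of a shear cannot itself be a shared edge. Suppose, say, a horizontal shear $\ell$ ends at a point $p$, and that the horizontal segment immediately beyond $p$ is a shared edge rather than the interior of a tile. By Lemma~\ref{lem:shearstruc}, along $\ell$ the tiles above form a block of $A$'s followed by a block of $B$'s (or vice versa), of total length equal to that of the block below, which is the same multiset of tiles in the opposite order. At the endpoint $p$, the top and bottom configurations of $\ell$ must "re-sync": the rightmost tile above $\ell$ and the rightmost tile below $\ell$ must have their right edges at compatible positions so that the line can continue as a shared edge. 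But the rightmost tile above is of one type (say $B$) and the rightmost tile below is of the other type (say $A$), since the orders are reversed; these tiles have different widths by incommensurability, so their right edges are offset, and the segment beyond $p$ cannot be a shared edge on both sides. Hence the continuation of $\ell$ past $p$ enters the interior of a tile — either the tile sitting above that continuation, or the one below, is a single tile whose interior the line pierces.

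Next I would conclude the "ends in an interior point of a vertical shear" claim. Having shown the continuation of the horizontal shear $\ell$ enters the interior of some tile $T$, the endpoint $p$ of $\ell$ lies on the boundary of $T$ — specifically on a vertical edge of $T$, in its interior (not at a corner of $T$, since if $p$ were a corner of $T$ the line through $p$ would exit $T$ immediately rather than enter its interior). That vertical edge of $T$ is a face of a tile, so the maximal vertical line segment of tile-faces through $p$ is either a shared edge or a shear. It cannot be a shared edge: if it were, $p$ would be a point where a shared vertical edge meets the tiles above and below $\ell$ with matching vertical positions, but then $\ell$ would have to continue as a shared edge past $p$ (the tile above the continuation and the tile below would be forced to align), contradicting the previous paragraph. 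Therefore the vertical segment through $p$ is a shear, and $p$ is an interior point of it (it is not an endpoint of that vertical shear, since by the same argument applied in the vertical direction, the endpoints of vertical shears are where they enter tile interiors, and $p$ is where a horizontal shear ends, not where the vertical one does — more simply, $p$ lies strictly between the top and bottom of $T$'s vertical edge, hence strictly inside the vertical shear). The symmetric statement for vertical shears follows by exchanging the roles of the coordinate axes.

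Finally, "shears never cross" is immediate: if a horizontal shear and a vertical shear crossed, their crossing point would be an interior point of both, but at an interior point of a shear the line perpendicular to it passes through the interior of a tile on at least one side, so that perpendicular line cannot be a shear at that point (a shear consists entirely of tile faces, not tile interiors). More precisely, a crossing point would be a point lying in the interior of a face-only horizontal segment and simultaneously in the interior of a face-only vertical segment, forcing all four quadrant-tiles at that point to have a corner there; but then both the horizontal and vertical segments would be shared edges near that point, not shears. The main obstacle I anticipate is the bookkeeping in the first paragraph: carefully checking, using Lemma~\ref{lem:shearstruc} and incommensurability, that the reversed $A$/$B$ order at the two ends of a shear genuinely prevents a shared-edge continuation — this requires noting that at an endpoint the extreme tiles on the two sides are of opposite types and hence of different widths, so their outer edges cannot line up. Everything else is a routine consequence of the definitions of shear and shared edge.
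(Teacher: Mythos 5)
There is a genuine gap at the crux of your argument, in the first paragraph. You rule out a shared-edge continuation by claiming that the rightmost tile above the shear and the rightmost tile below it ``have different widths by incommensurability, so their right edges are offset.'' This is false: the Shear Structure Lemma (Lemma~\ref{lem:shearstruc}) shows that the top and bottom sides of the shear carry the same number of $A$'s and the same number of $B$'s, hence have exactly the same total length $mw_A+nw_B$; consequently the rightmost tile above and the rightmost tile below both terminate precisely at the endpoint $p$, and their right edges \emph{coincide} there. (The differing widths offset the \emph{left} edges of those two tiles, which is irrelevant.) A shared-edge continuation past $p$ --- two $A$'s or two $B$'s meeting full face to full face with left edges at $p$ --- is therefore perfectly consistent geometrically, and no ``re-sync'' obstruction exists. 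Since this is the step you yourself flag as the main obstacle, the proof does not go through as written.

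The correct exclusion is combinatorial, not metric: if the shear (top side $A^mB^n$, bottom side $B^nA^m$) were continued to the right by a shared edge, that shared edge is between two tiles of the same type (a horizontal shared edge forces equal widths, hence equal types by incommensurability). If the shared tiles are $A$'s, the top of the \emph{extended} segment reads $A\cdots AB\cdots BA$; if they are $B$'s, the bottom reads $B\cdots BA\cdots AB$. Either way this violates the ABA Lemma (Lemma~\ref{lem:aba}), which was deliberately stated for arbitrary horizontal or vertical segments consisting of tile boundaries, not only for shears --- exactly so that it could be applied to this extended segment. Your second paragraph (the endpoint lies in the relative interior of a vertical face of the pierced tile, which is therefore part of a vertical shear) and the non-crossing claim are essentially recoverable once the first step is fixed, though your assertion that four tiles cornered at a point forces both segments to be shared edges there is also unjustified; the cleaner route is to note that, by incommensurability, no interior point of a shear can simultaneously be a corner of a tile on its top side and a corner of a tile on its bottom side.
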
	

\begin{proof}
	It suffices to rule out that the line segment making up the shear is extended by shared edges
	(as occurs at the right of Figure \ref{fig:shear}).
	Suppose without loss of generality that the shear is continued by a shared edge between two $A$'s. 	
	From the Shear Structure Lemma, we see that one side of the shear has $A$'s followed by $B$'s; while the
	other side has $B$'s followed by $A$'s. Along the first side of the shear as extended by the shared
	edges, one sees an $A$ followed by $B$'s followed by
	more $A$'s. This contradicts the ABA Lemma.
\end{proof}

\subsection{Block structures  and basic units in staircase tilings}
\label{subset:block-structures}

We show that the staircases divide a tiling into rectangular patches 
of perfectly aligned $A$ tiles and perfectly aligned $B$ tiles with shears surrounding
them:
\begin{prop}[Rectanglar Blocks]\label{prop:rectangles}
	Let $A$ and $B$ be incommensurable rectangular basic tiles and let $x\in Y_{A,B}^\stair$.
	Then $x$ induces a partition of $\mathbb R^2$ into rectangles.
	Each rectangle is filled either with $A$ tiles or with $B$ tiles. The shears are precisely the boundaries between
	the rectangles.
\end{prop}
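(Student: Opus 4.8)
The plan is to show that the shears partition $\mathbb R^2$ into rectangular regions by first analyzing how the staircases (the maximal zig-zag paths built from alternating horizontal and vertical shears, as established in Corollary~\ref{cor:endofshear}) behave, and then arguing that the complement of the union of all shears is a disjoint union of open rectangles each of which is monochromatic. First I would record the consequence of Corollary~\ref{cor:endofshear} that shears never cross and each shear terminates in the interior of a perpendicular shear; this means the union $\Sigma$ of all shears is a union of polygonal arcs, and locally near any point not on the boundary grid the picture is controlled. The key structural claim is that a connected component $U$ of $\mathbb R^2\setminus\Sigma$ — more precisely, of the complement of the union of shears together with all \emph{shared} edges lying on lines through shears — is an open rectangle (possibly a half-strip or quadrant or all of $\mathbb R^2$, but in the staircase case these degenerate possibilities must be ruled out using the no-bi-infinite-shear hypothesis).

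**The main steps, in order, would be:** (1) Observe that within a component $U$, no shear enters, so every maximal horizontal line segment of tile boundaries inside $U$ is a union of shared edges only; by the ABA Lemma (Lemma~\ref{lem:aba}) there is at most one tile-type transition along such a segment from each side, but since the segment consists of shared edges the tiles directly above and below must be aligned and of the same type, so in fact there is \emph{no} transition — the whole infinite line of boundaries through that segment, restricted to $U$, is monochromatic on both sides. (2) Deduce that all the $A$/$B$ tiles meeting $U$ are perfectly aligned into a grid and all of one type: if two tiles of different type both met $U$, one could connect them through $U$ by a path crossing boundary segments, producing a type transition along some boundary line inside $U$, contradicting (1). (3) Argue the region is rectangular: the boundary of $U$ consists of pieces of shears, and since shears are axis-parallel segments meeting only at interior points of perpendicular shears (Corollary~\ref{cor:endofshear}), walking around $\partial U$ one alternates horizontal and vertical segments with no crossings; combined with the alignment from (2), which forces the tiles just inside $U$ along each side to line up flush, the boundary must close up into a single rectangle (using that a bounded component is surrounded, and an unbounded one would force a semi-infinite or bi-infinite shear along one of its sides, contradicting $x\in Y_{A,B}^{\stair}$). (4) Conclude that $\Sigma$ is exactly the union of these rectangle boundaries and that the rectangles partition the plane, each filled monochromatically.

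**I expect the main obstacle** to be step (3): carefully showing that each component really closes up into a \emph{rectangle} rather than some more complicated axis-parallel region (e.g., an L-shape or a staircase-shaped region) and ruling out unbounded degenerate components. The subtlety is that the staircases themselves look like infinite zig-zags, so one must show that a staircase, viewed as the boundary between two aligned blocks, is actually the boundary of a \emph{single} rectangle on each side — i.e., that a shear cannot "turn a corner" in a way that keeps the same region on one side while changing the monochromatic type. This is exactly where the Shear Structure Lemma (Lemma~\ref{lem:shearstruc}) does the work: at the endpoint of a horizontal shear, which by Corollary~\ref{cor:endofshear} lies in the interior of a vertical shear, the configuration of $A$'s and $B$'s is rigidly determined, so the monochromatic block on one side of the horizontal shear is bounded on that side exactly by that vertical shear, and iterating this shows each block is genuinely rectangular. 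I would organize the write-up so that (1)–(2) establish monochromatic alignment within components, and then (3) runs a short argument using the endpoint structure of shears to bound each block on all four sides by shears, with the no-infinite-shear hypothesis eliminating the unbounded cases.
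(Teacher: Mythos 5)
Your route is genuinely different from the paper's. The paper does not pass to connected components of the complement of the shears at all: it starts from a single horizontal shear, applies the Shear Structure Lemma (Lemma~\ref{lem:shearstruc}) to the vertical shear created at the $A$-to-$B$ transition above it to read off the height $n$ of the $A$ block, uses incommensurability to force the strip between the two horizontal shears to consist of exactly $n$ stacked rows of $A$'s, and then invokes Corollary~\ref{cor:endofshear} to produce the fourth bounding shear on the left. That is precisely the mechanism you anticipate as the resolution of your ``main obstacle'' in step (3), so your diagnosis of where the work lies is accurate; your top-down component decomposition is a legitimate reorganization, and it has the minor advantage of making explicit the degenerate all-$A$ (or all-$B$) grid tiling with no shears, which the paper's proof silently skips over.

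There is, however, a genuine gap in step (1) as written. You claim that along a maximal horizontal segment of shared edges there is no type transition ``since the segment consists of shared edges the tiles directly above and below must be aligned and of the same type.'' That is a non sequitur: full-face-to-full-face matching forces each tile above to have the same type as the tile directly below it (by incommensurability of widths), but it does not exclude a single transition along the segment, i.e.\ a run of $A$-over-$A$ pairs followed by a run of $B$-over-$B$ pairs, which is exactly the one-transition pattern the ABA Lemma permits. To rule this out you must argue further: at the transition point the right faces of the $A$'s and left faces of the $B$'s cannot be shared edges (since $h_A\ne h_B$), so a vertical shear passes through the segment there; on its left side the distance from the shear's lower endpoint up to the segment is covered by $A$ tiles, and on its right side the same distance is covered by $B$ tiles, forcing $jh_A=ih_B$ for positive integers $i,j$ and contradicting incommensurability (alternatively, this contradicts the order-reversal in Lemma~\ref{lem:shearstruc}). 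With that repair, and with step (3) carried out via the Shear Structure Lemma as you indicate, your argument closes.
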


\begin{proof}
	Let $x\in Y_{A,B}^\stair$ and consider a shear in $x$, which we assume without
	loss of generality to be horizontal (and finite), and further suppose that the tiles lying above it consist of  $m$ $A$ tiles followed by some sequence of $B$'s.
	We consider the collection of $A$ tiles accessible from these tiles by crossing only shared edges
	and claim they form a rectangular patch. Applying symmetric versions of the argument  completes  the proof of the lemma.
	
The interface between the last $A$ tile and the first $B$ tile forms part of a vertical shear which 
ends where it meets the horizontal shear.
	Since we already know the bottom tile 
	on the left of this shear is an $A$, applying the Shear Structure Lemma, we see that the left side of the vertical
	shear consists of some number $n$ of $A$'s followed by some $B$'s.
	Where the $A$'s transition to $B$'s along the vertical shear is the rightmost
	endpoint of a new horizontal shear going leftwards (see Figure \ref{fig:rect} for an illustration).
	
	Since this shear is exactly $nh_A$ above the lower shear, the space between the shears can only be
	filled with $n$ vertically stacked $A$ tiles. It follows that the transition to $B$'s 
	along the underside of the upper shear
	can only take place after at least $m$ $A$ tiles have been placed. 
	
	By Corollary \ref{cor:endofshear}, we see that a vertical shear goes through the left endpoint
	of the lower horizontal shear. Since a shear cannot be continued 
	by a shared edge, this shear is forced to continue until it meets the
	upper shear. Thus the $m\times n$ block of $A$'s is surrounded by shears on all four sides, as required.	
\end{proof}

\begin{figure}
\includegraphics{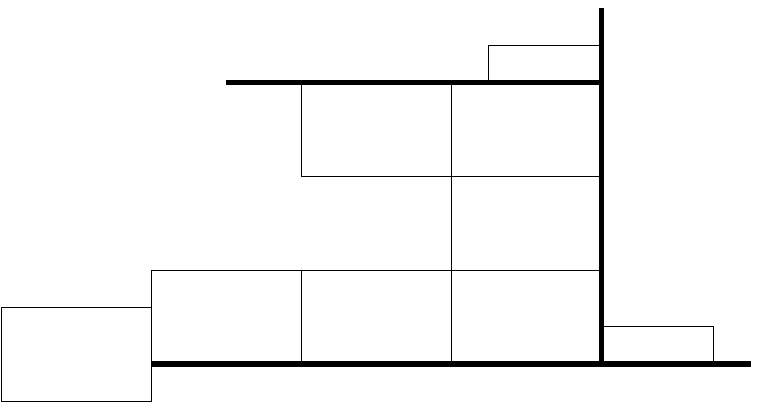}
\caption{The pattern forced by a shear}\label{fig:rect}
\end{figure}

As a consequence of the Rectangular Blocks Proposition, staircase tilings are extremely
rigid. An example of a piece of tiling is illustrated in Figure \ref{fig:rects}.

\begin{figure}
\includegraphics{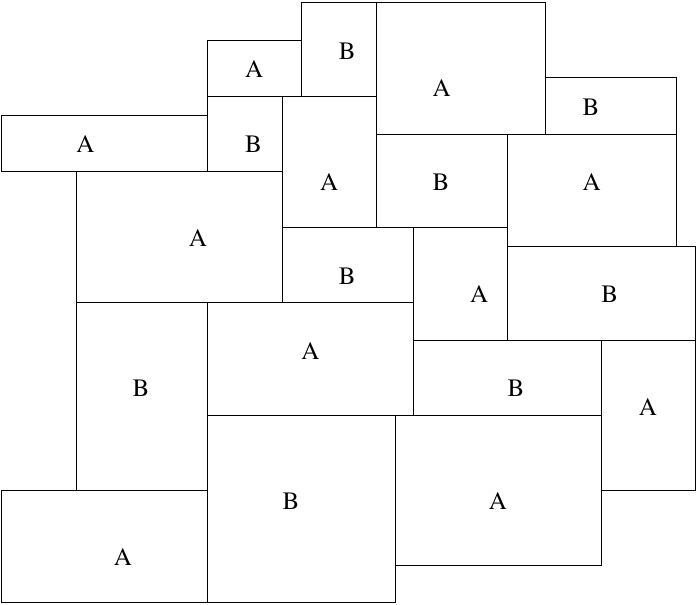}
\caption{Patch of an incommensurable tiling without infinite shears. Note that each rectangle denotes
a block of $A$'s or $B$'s. Notice also that the ABA lemma applies: each shear has $A$'s and $B$'s on one side; 
and the same number of $A$'s and $B$'s in the reverse order on the other side}
\label{fig:rects}
\end{figure}

We call a configuration of a rectangular block of $A$'s and a rectangular block of $B$'s
sitting on top of a horizontal shear a \emph{basic unit}. These come in two types: $AB$ basic units in which 
the $A$ block sits to the left of the $B$ block; and $BA$ basic units in which the $B$ block is to the left.
We show that for tilings in $Y_{A,B}^\stair$, the tiling is completely covered by basic units, disjointly
up to their boundary points, and each basic unit is of the same type. Furthermore, we show that the basic
units have a lattice structure, so that they may be naturally indexed by $\mathbb Z^2$.
This is illustrated in Figure \ref{fig:lattice}.

\begin{figure}
\includegraphics{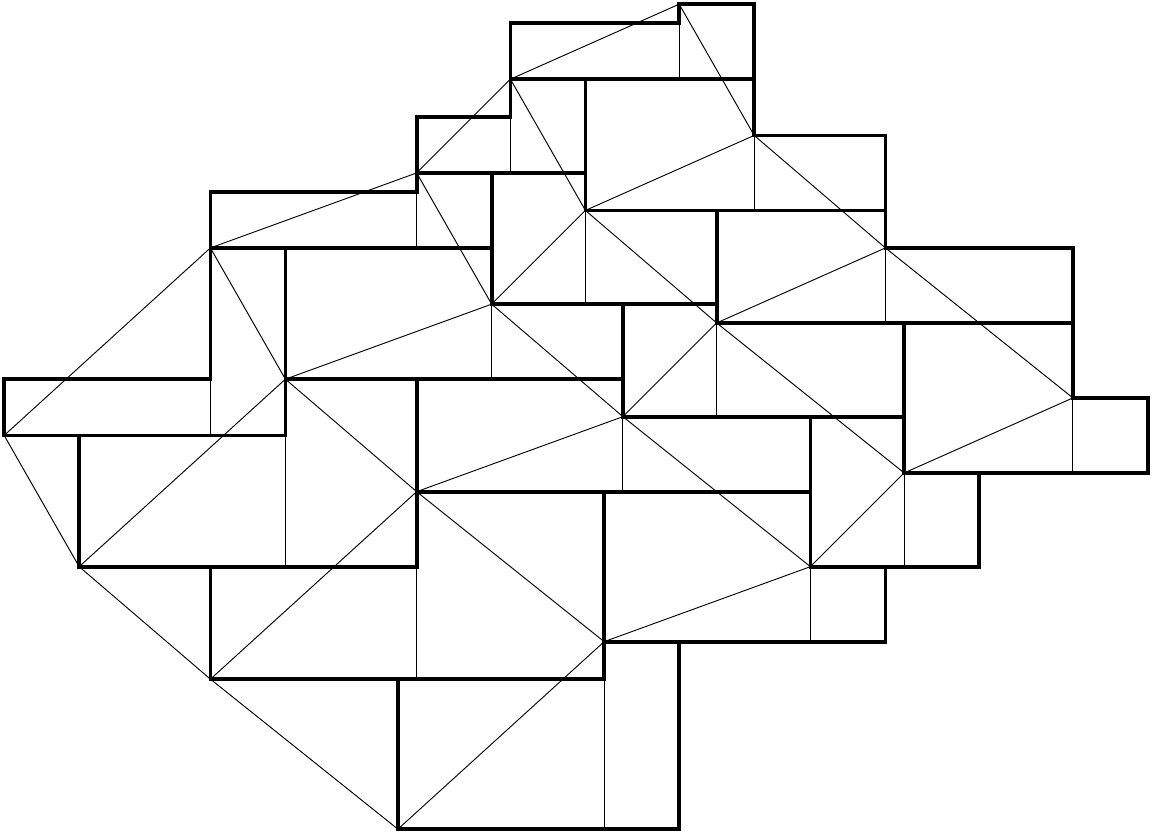}
\caption{Patch of a tiling showing the basic units and their lattice structure (joining the
bottom left corners of the basic units).}
\label{fig:lattice}
\end{figure}

Let $x\in Y_{A,B}^\stair$ and consider a basic unit, which we  assume to be of $AB$ type. 
Notice that the upper left corner of the $B$ block necessarily lies on a vertical shear, and at the left
endpoint of a right-pointing horizontal shear. Since on the underside of this horizontal shear, there is a $B$ block to
the left, it must be the case that above it there is an $A$ block to the left. Thus the upper left corner of the $B$
block is the lower left corner of another $AB$ basic unit and vice versa. Similarly the lower right corner 
of an $AB$ basic unit is the upper right corner of the $A$ block of another $AB$ basic unit and vice versa. 

Given a pair of basic units (of $AB$ type) such that the upper left corner of the $B$ block of the first
is the lower left corner of the second basic unit, we say that the second is the \emph{northeast neighbor} of the first
and conversely, the first is the \emph{southwest neighbor} of the second.
Similarly given a pair of basic units such that the upper right corner of the $A$ block of the first one is the 
lower right corner of the second basic unit, the second is said to be the \emph{northwest neighbor}
of the first; and the first is the \emph{southeast neighbor} of the second. 

We record some basic facts in a lemma.

\begin{lem}[Basic Units]\label{lem:basicunits}
Let $A$ and $B$ be incommensurable and $x\in Y_{A,B}^\stair$. 
Each $AB$ basic unit in $x$ has neighbors in each of the four intermediate directions:
northeast, northwest, southwest and southeast. These neighbors are distinct
and the southwest neighbor of the northeast neighbor is the basic unit itself; similarly
along the southeast--northwest direction.

If a pair of $AB$ basic units are northeast--southwest neighbors, then their
respective $A$ blocks have equal height because they lie on a common vertical shear, while their $B$
blocks have equal width because they lie on a common horizontal shear. 
Similarly if a pair of $AB$ basic units are northwest--southeast neighbors, then their respective $A$ blocks 
have equal width and their $B$ blocks have equal height.
\end{lem}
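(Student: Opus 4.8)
The plan is to extract all the stated facts from the Rectangular Blocks Proposition (Proposition~\ref{prop:rectangles}), the Shear Structure Lemma (Lemma~\ref{lem:shearstruc}), and Corollary~\ref{cor:endofshear}, together with the geometric discussion of $AB$ basic units immediately preceding the lemma. First I would fix $x\in Y_{A,B}^\stair$ and an $AB$ basic unit $U$, consisting of an $A$ block and a $B$ block sitting on a common horizontal shear $\ell$. The discussion before the lemma already shows: the upper-left corner of the $B$ block is the lower-left corner of an $AB$ basic unit (its northeast neighbor $U_{NE}$), and the lower-right corner of the $A$ block is the upper-right corner of the $A$ block of an $AB$ basic unit (its southeast neighbor $U_{SE}$). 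Running the same corner argument in the other two diagonal directions — using that corners of blocks lie on shears and that by Corollary~\ref{cor:endofshear} shears always terminate in interiors of transverse shears, so no corner is ambiguous — produces the northwest neighbor $U_{NW}$ and southwest neighbor $U_{SW}$. So existence of all four neighbors is immediate from the already-established structure theory.

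Next I would prove the involution statements: the southwest neighbor of $U_{NE}$ is $U$ itself, and likewise for the southeast--northwest pair. This is where a little care is needed: the definitions of "northeast neighbor'' and "southwest neighbor'' are set up so that $V$ is the northeast neighbor of $U$ iff the upper-left corner of $U$'s $B$ block equals the lower-left corner of $V$'s $B$ block, and $V$ is the southwest neighbor of $U$ iff the lower-left corner of $U$'s $B$ block equals the upper-left corner of $V$'s $B$ block. These two conditions are manifestly the same relation read in opposite directions, since the designated corner point is shared and there is exactly one $AB$ basic unit having a given point as the lower-left corner of its $B$ block (the partition into rectangular blocks is canonical by Proposition~\ref{prop:rectangles}, and the block structure determines the basic unit). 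Hence $U_{SW}(U_{NE}(U)) = U$, and symmetrically for the other diagonal. Distinctness of the four neighbors follows because they are accessed through four distinct corner points of $U$ (two corners of the $A$ block, two of the $B$ block), and a basic unit is determined by — indeed, spatially near — that corner; alternatively one notes the four neighbors occupy regions in four genuinely different directions from $U$.

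Finally, the equal-dimension claims: if $U$ and $U_{NE}$ are northeast--southwest neighbors, then by construction the right edge of $U$'s $A$ block (which is also the left edge of $U$'s $B$ block) and the left edge of $U_{NE}$'s $A$ block lie on one and the same vertical shear — namely the maximal vertical tile-boundary segment through the shared corner point, which by Corollary~\ref{cor:endofshear} is a well-defined shear. Since $U$'s $A$ block sits immediately below that corner and $U_{NE}$'s $A$ block immediately above it, and both have their full right edges on this shear, by the Rectangular Blocks Proposition the two $A$ blocks have the same... actually what is claimed is equal height: the $A$ block of $U$ has its top at the corner and the $A$ block of $U_{NE}$ has its bottom at the corner, and the claim "equal height'' should be read from the vertical-shear constraint combined with how the blocks abut — I would trace through Figure~\ref{fig:lattice} to pin down which of the four possible readings is intended, but the mechanism is always the same: two blocks sharing a full edge along a common shear inherit a matching dimension. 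The $B$ blocks of $U$ and $U_{NE}$ lie on a common horizontal shear (the top of $U$'s $B$ block = $\ell$ shifted up, = the bottom of $U_{NE}$'s $B$ block), so they have equal width. The northwest--southeast case is identical with the roles of $A$ and $B$, and of "width'' and "height,'' interchanged. The main obstacle I anticipate is purely bookkeeping: correctly matching each of the eight corner-incidences to the right neighbor and the right shared shear, and making sure the "equal height / equal width'' assignments match the figure; the underlying geometry is entirely forced by the preceding results and requires no new idea.
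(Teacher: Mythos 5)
Your proposal is correct and follows essentially the same route as the paper's proof: existence of the four neighbors comes from the corner analysis preceding the lemma, the involution is definitional, distinctness comes from the four neighbors lying in four different quadrants relative to $U$, and the equal-dimension claims come from the Shear Structure Lemma applied to the shared vertical and horizontal shears. The one bookkeeping point you leave open resolves exactly as your horizontal-shear argument suggests: counting tiles on the two sides of the vertical shear through the shared corner shows that northeast--southwest neighbors have $A$ blocks of equal height (and, as you showed, $B$ blocks of equal width), matching the statement of the lemma.
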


\begin{proof}
The neighbors are distinct, as the vectors from the lower left corner of a basic
unit to the lower left corners of its northeast, northwest, southwest, and southeast neighbors 
lie in the interiors of the first, second, third and fourth quadrants respectively. 

The fact that northeast neighbor of the southwest neighbor of a basic unit is the basic unit itself
follows from the definition and similarly in the other directions.

The equality of $A$ block heights between northwest--southeast neighbors follows
from the Shear Structure Lemma, since they lie along the same vertical shear. Similar arguments 
establish the equality of $B$ block widths along northwest--southeast neighbors and of
$A$ block widths and $B$ block heights along the northeast--southwest neighbors.
\end{proof}

\subsection{Basic unit lattice structure}
\label{subsec:lattice}

We claim that the basic units are arranged in a lattice that can be 
indexed in a natural way.  We start by showing that 
for each basic unit, the identification of the neighboring units has 
been done in such a way that the directions commute:
\begin{lem}\label{lem:commute}
Let $x\in Y_{A,B}^\stair$, where $A$ and $B$ are incommensurable, 
and suppose that  $x$ contains an $AB$ basic unit.
Then the operations of taking northeast and northwest neighbors commute. 
\end{lem}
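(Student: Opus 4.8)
The goal is to show that for an $AB$ basic unit $U$ in a staircase tiling $x$, the northeast neighbor of the northwest neighbor of $U$ coincides with the northwest neighbor of the northeast neighbor of $U$. The plan is to work entirely at the level of the shear structure near $U$, using the Rectangular Blocks Proposition (Proposition~\ref{prop:rectangles}), the Shear Structure Lemma (Lemma~\ref{lem:shearstruc}), Corollary~\ref{cor:endofshear} (shears never cross; each shear terminates in the interior of a transverse shear), and the Basic Units Lemma (Lemma~\ref{lem:basicunits}). First I would set up coordinates: place the lower left corner of the $A$ block of $U$ at the origin, so the $A$ block is $[0,a]\times[0,p]$ and the $B$ block is $[a,a+b]\times[0,q]$, where $a=m_A w_A$, $b=m_B w_B$, $p=n_A h_A$, $q=n_B h_B$ are the width/height data of the two blocks. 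The horizontal shear under $U$ lies along $y=0$ between $x=0$ and $x=a+b$; the vertical shear separating the two blocks lies along $x=a$ between $y=0$ and $y=\min(p,q)$ (one checks $p\neq q$ by incommensurability, so exactly one of them is shorter and that determines where this vertical shear terminates inside the taller block).

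Next I would locate the two neighbors explicitly. The northeast neighbor $U_{NE}$ has its $A$ block's lower left corner at the upper left corner of $U$'s $B$ block, namely at $(a,q)$: the vertical shear $x=a$ continues upward past $y=q$ (it cannot be continued by a shared edge by Corollary~\ref{cor:endofshear}, and by the Shear Structure Lemma the configuration above $y=q$ on the two sides of $x=a$ is $B$-then-$A$ on the left, $A$-then-$B$ on the right, forcing an $AB$ basic unit with lower left corner $(a,q)$). Similarly the northwest neighbor $U_{NW}$ has the lower right corner of its $A$ block at the upper right corner of $U$'s $A$ block, i.e. at $(a,p)$, so (tracking widths via Lemma~\ref{lem:basicunits}, which says the $A$ blocks of $NW$–$SE$ neighbors have equal width) its $A$ block is $[0,a]\times[p,p+p']$ for some $p'$, and its $B$ block sits to the right starting at $x=a$. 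Then I would compute, in the same way, the $A$ block of $U_{NE}$ (it has width equal to that of $U$'s $B$ block? no — its width is the $A$-block width of $U_{NE}$, call it $a'$; by Lemma~\ref{lem:basicunits} along the $NE$–$SW$ direction the $A$ blocks of $U$ and $U_{NE}$ share the vertical shear $x=a$ extended, so actually their left edges are at different $x$; I should instead track that the $B$ block of $U_{NW}$ and the $A$ block of $U_{NE}$ share the vertical shear $x=a$ and the horizontal shear at the common height). The key point to extract is that both $NE(NW(U))$ and $NW(NE(U))$ are the unique $AB$ basic unit whose lower left corner sits at the intersection point $x=a$, $y=p+q$ — I would verify this by showing the vertical shear $x=a$ and the horizontal shear $y=p+q$ (the latter being the common top of the $A$ block of $U_{NW}$ at height $p+p'$... here I need $p'=q$) both pass through that point and that a basic unit's lower left corner necessarily sits there.

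The step I expect to be the main obstacle is pinning down the relevant height/width equalities so that the two iterated constructions genuinely land on the \emph{same} corner point, rather than merely on corners lying on a common shear. The subtle point is that the $A$ block of $U_{NW}$ has the same \emph{width} as the $A$ block of $U$ (Lemma~\ref{lem:basicunits}), and its $B$ block then begins at $x=a$; and the $B$ block of $U_{NW}$ must have the same \emph{width} as... — one has to chase the shear that bounds $U_{NW}$ on the right and show it is the same extended vertical shear $x=a$ that bounds the $A$ block of $U_{NE}$ on its left. This forces the height of the $A$ block of $U_{NW}$ to be exactly $q$ (the height of $U$'s $B$ block) and the height of the $B$ block of $U_{NE}$ to be exactly $p$ (the height of $U$'s $A$ block), via the Shear Structure Lemma applied to the vertical shear $x=a$ between heights $p$ and $p+q$: on its left the tiles are $A$'s (top of $U_{NW}$'s $A$ block, from $y=p$ to $y=p+q$) then eventually $B$'s, and on its right the mirrored order, and the transition heights must match, giving $p+q$ as the height where $NE(NW(U))$ and $NW(NE(U))$ both begin. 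Once this common corner is identified, the uniqueness of the basic unit with that corner (the local tile-type pattern around it forces an $AB$ unit, again by Shear Structure plus Rectangular Blocks) finishes the argument. I would present the whole thing with one carefully labeled figure analogous to Figure~\ref{fig:lattice}, since the bookkeeping is geometric and a picture makes the shear-chasing transparent.
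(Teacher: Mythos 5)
Your overall strategy is the paper's: follow the vertical shear $x=a$ separating the two blocks of $U$, apply the Shear Structure Lemma (Lemma~\ref{lem:shearstruc}) to match the heights on its two sides, and identify the single $AB$ basic unit sitting atop the horizontal shear into which $x=a$ terminates as both $NE(NW(U))$ and $NW(NE(U))$. However, there is a concrete error in your placement of $U_{NW}$ that would derail the shear-chasing. The northwest neighbor is defined by the condition that the upper right corner of the $A$ block of $U$ is the lower right corner of the \emph{basic unit} $U_{NW}$, i.e.\ of its $B$ block (the rightmost block), not of its $A$ block. Thus $U_{NW}$'s $B$ block is $[a-c'w_B,a]\times[p,p+q]$, abutting the shear $x=a$ from the left above height $p$, and its $A$ block lies further to the left, so that $U_{NW}$ sits strictly up-and-to-the-left of $U$ (cf.\ Lemma~\ref{lem:uniquelabel}). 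With your placement ($A$ block of $U_{NW}$ equal to $[0,a]\times[p,p+p']$ and $B$ block to the right of $x=a$), the left side of the shear $x=a$ would read ``$A$'s then $A$'s'' against ``$B$'s then $A$'s'' on the right, contradicting Lemma~\ref{lem:shearstruc}. Correspondingly, it is the $B$ block of $U_{NW}$ (not its $A$ block) whose height is forced to equal $q$, and the $A$ block of $U_{NE}$ whose height is forced to equal $p$.

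Once this is corrected the argument closes exactly as in the paper: the shear $x=a$ carries $n_A$ $A$'s then $n_B$ $B$'s on the left and $n_B$ $B$'s then $n_A$ $A$'s on the right, so both sides terminate at $(a,p+q)$; that point is an interior point of a horizontal shear whose underside has $U_{NW}$'s $B$ block to the left and $U_{NE}$'s $A$ block to the right, and whose endpoints are the upper left corner of $U_{NW}$'s $B$ block and the upper right corner of $U_{NE}$'s $A$ block. The $AB$ basic unit sitting on this shear is then, by the definitions of the neighbor relations, simultaneously the northeast neighbor of $U_{NW}$ and the northwest neighbor of $U_{NE}$. Note that its lower left corner is the left endpoint of that horizontal shear, at $x=a-c'w_B$, not at $(a,p+q)$ as you assert; $(a,p+q)$ is merely the point where the vertical shear dies into the horizontal one, consistent with Corollary~\ref{cor:endofshear}.
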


\begin{proof}
To see this, let $b_0$ be an $AB$ basic unit in $x$, $b_1$ be its northeast neighbor, and let $b_2$
be the northwest neighbor of $b_0$.

By definition, the bottom left corner of $b_1$ is the upper left corner of the $B$ block in $b_0$ and the
bottom right corner of $b_2$ is the upper right corner of the $A$ block in $b_0$. These two points lie
on the same vertical shear whose top is simultaneously the upper right corner of the $B$ block of $b_2$
and the upper left corner of the $A$ block of $b_1$. This point lies on a horizontal shear whose underside
has $b_2$'s $B$ block to the left and $b_1$'s $A$ block to the right.
The left endpoint of the horizontal shear is the upper left corner of $b_2$'s $B$ block and the 
right endpoint of the shear is the upper right corner of $b_1$'s $A$ block.
On top of this shear, there is an $AB$ basic unit $b_3$, whose lower left corner coincides with the upper left
corner of $b_2$'s $B$ block and whose lower right corner coincides with the upper right corner of $b_1$'s
$A$ block. It follows that $b_3$ is simultaneously the northeast neighbor of $b_1$ and the 
northwest neighbor of $b_2$ as required.
\end{proof}

This gives us a natural labeling of basic units by $\mathbb Z^2$.  Labeling an
arbitrary tile (below, we choose the tile containing the origin) with $(0,0)$, if a tile is labeled $(i,j)$,
we label its northeast neighbor by $(i+1,j)$, its northwest neighbor by $(i,j+1)$, its 
southwest neighbor by $(i-1,j)$ and its southeast neighbor by $(i,j-1)$. By Lemmas \ref{lem:commute}
and \ref{lem:basicunits}, this labeling is consistent. 
The Rectangular Block Proposition 
implies that two basic units are equal or have disjoint interiors.  
We shall show that each basic unit is uniquely labeled, and that the basic units cover the plane.

\begin{lem}\label{lem:uniquelabel}
Let $x\in Y^\stair_{A,B}$ be as above. No basic unit is assigned more than one label by the 
scheme described above.
\end{lem}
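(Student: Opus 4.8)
The plan is to show that if the same basic unit $b$ received two distinct labels, say $(i,j)$ and $(i',j')$, then one could construct a lattice path in the labeling graph from $(i,j)$ to $(i',j')$ that returns to the same geometric basic unit, and derive a contradiction from the geometry of the staircase structure. More concretely, I would argue by contradiction: suppose $b$ is assigned two labels. Since any two labels of basic units in $x$ are joined by a sequence of neighbor moves (northeast, northwest, and their inverses), there is a closed walk in $\mathbb Z^2$ — nontrivial, since the two labels differ — that corresponds to a sequence of basic units $b = b^{(0)}, b^{(1)}, \dots, b^{(n)} = b$ where consecutive units are neighbors in one of the four intermediate directions. The goal is to show that the composition of the corresponding geometric moves produces a genuine translation of the plane by a nonzero vector, which is impossible since it would have to fix $b$.

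The key step is to track a suitable reference point of each basic unit — say the lower-left corner of its $A$ block. Passing to the northeast neighbor moves this reference point by the vector $(w_A^{b}, h_A^{b})$ where $w_A^b, h_A^b$ are the width and height of the $A$ block of $b$ (up to the precise convention relating corners of $A$ and $B$ blocks); passing to the northwest neighbor moves it by a vector with negative first coordinate and positive second coordinate. By Lemma~\ref{lem:basicunits}, northeast–southwest neighbors share block dimensions along the common shears, and likewise for northwest–southeast neighbors, so these displacement vectors are consistent: the displacement for a northeast move followed by its inverse southwest move is exactly zero, and similarly for northwest–southeast. Combined with Lemma~\ref{lem:commute}, which says the northeast and northwest operations commute (and hence so do all four intermediate-direction operations in the appropriate pairings), the net displacement along the closed walk depends only on the net number of northeast moves minus southwest moves, and of northwest moves minus southeast moves — i.e. on the difference $(i',j') - (i,j)$ of the two labels in $\mathbb Z^2$, via a fixed linear map.

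The main obstacle is then to check that this linear map $\mathbb Z^2 \to \mathbb R^2$ is injective, so that a nonzero label difference forces a nonzero geometric displacement of the reference point, contradicting the fact that the walk returns to $b$. This amounts to showing that the northeast-displacement vector and the northwest-displacement vector are linearly independent over $\mathbb Q$. The northeast displacement has positive first coordinate (it is a positive combination of widths, built from $w_A$ and $w_B$) and positive second coordinate; the northwest displacement has negative first coordinate and positive second coordinate — the two vectors lie in opposite open half-planes relative to the vertical axis, so they cannot be parallel, and hence the map is injective on $\mathbb Z^2$. (Here incommensurability of $A$ and $B$ is what guarantees the coordinates are genuinely nonzero and the block dimensions are well-defined positive reals, preventing degeneracies.) This contradiction completes the proof that no basic unit receives two labels. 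The one point requiring care — and likely the fiddliest — is bookkeeping the exact corner conventions so that the displacement vectors for the four moves are correctly signed and the telescoping across a closed walk genuinely collapses; I would set this up once carefully at the start and then let Lemmas~\ref{lem:basicunits} and~\ref{lem:commute} do the rest.
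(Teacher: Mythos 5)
Your core argument is the same as the paper's: decompose the displacement between the two putatively equal units into northeast steps and northwest steps and observe that these lie in opposite open half-planes, so the total displacement cannot vanish unless the label difference is zero. The paper does this directly in a few lines (the vector from the $(i,j)$ unit to the $(i',j)$ unit lies strictly in the first quadrant, the vector from $(i',j)$ to $(i',j')$ strictly in the second, so their sum is nonzero), without the closed-walk scaffolding.

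One inaccuracy worth flagging: the net displacement is \emph{not} given by a fixed linear map of the label difference $(i'-i,j'-j)$. The single-step displacements vary from unit to unit --- the NE step out of the $(i,j)$ unit is $(a_iw_A,d_ih_B)$ and the NW step is $(-c_{j+1}w_B,b_jh_A)$, and the sequences $(a_i)$, $(d_i)$, $(b_j)$, $(c_j)$ are in general non-constant --- so your appeal to injectivity of a linear map via linear independence of two fixed vectors does not literally apply. The argument survives anyway, for the reason you yourself give: \emph{every} NE step lies in the open first quadrant and \emph{every} NW step in the open second quadrant, so a sum of $m$ of the former and $n$ of the latter (with signs reversed for negative $m$ or $n$) has a strictly positive or strictly negative coordinate unless $m=n=0$. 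That sign observation, not linearity, is what Lemmas~\ref{lem:basicunits} and~\ref{lem:commute} feed into, and it is exactly the paper's proof.
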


\begin{proof}
Given two distinct labels $(i,j)$ and $(i',j')$, without loss assume that $i'\ge i$. If $j'\ge j$, then we see that the
translation vector from the bottom left corner of the $(i,j)$ basic unit to the $(i',j)$ basic unit lies
strictly in the first quadrant, while the translation vector from the bottom left corner of the $(i',j)$ basic
unit to the $(i',j')$ basic unit lies strictly in the second quadrant.  Thus the sum of the translation vectors (that is
the translation vector from the $(i,j)$ basic unit to the $(i',j')$ basic unit) lies strictly in the upper half plane.
It follows that the basic units are distinct. Similarly if $j'\le j$, the translation vector lies strictly in the right
half plane, so that in either case, we see that distinctly labeled basic units are distinct.
\end{proof}

We can now name the staircases in the name of these tilings.  
For $x\in Y_{A,B}^\stair$, let the basic units be labeled as above. 
We define the $j$th \emph{northeast staircase} to be the union of the
bottoms of $A$ tiles and left sides of $B$ tiles in basic units labeled $(i,j)$ for some $i$. We also let
$j$th \emph{expanded northeast staircase} be the union of all of the basic units labeled $(i,j)$ for some $i$.
We make the analogous definitions for northwest staircases: the $i$th northwest 
staircase is the union of the left sides of the
$A$ tiles in basic units labeled $(i,j)$ for some $j$ and tops of the $B$ tiles in basic units labeled $(i-1,j)$ for some $j$.
The $i$th \emph{expanded northwest staircase} is the union of the $A$ tiles in basic units labeled $(i,j)$ for some $j$
and the $B$ tiles in basic units labeled $(i-1,j)$ for some $j$. 
We call such a labeling the \emph{standard labeling of a staircase}.  
This is illustrated in Figure \ref{fig:patchstairs}.

\begin{figure}
\includegraphics{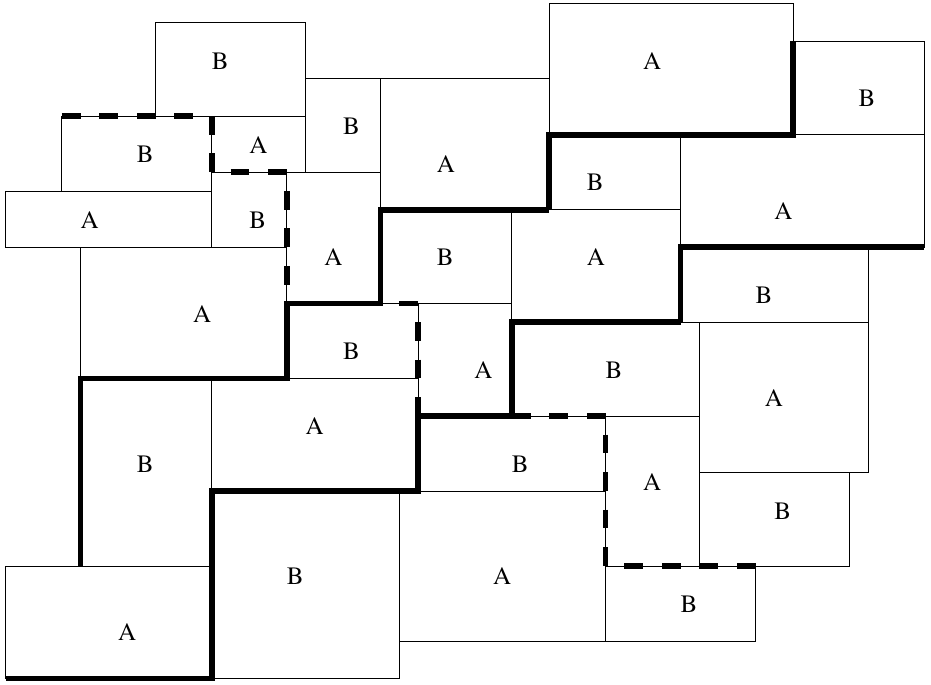}
\caption{A patch of a staircase tiling with two northeast and one northwest staircase indicated.}
\label{fig:patchstairs}
\end{figure}

We summarize the basic properties of staircases:
\begin{lem}\label{lem:staircase}
Let $A$ and $B$ be incommensurable rectangular tiles and $x\in Y_{A,B}^\stair$. 
A staircase is uniquely determined by the bottom left corners of the basic units lying on it. 

The heights of $A$ blocks and the widths of $B$ blocks are constant along any northeast staircase, as are
the widths of $A$ blocks and heights of $B$ blocks along any northwest staircase.

In the standard labelings of the staircases in $x$, 
the $(j+1)$st northeast staircase is a rigid translation of the $j$th staircase through $(-\alpha,\beta)$,
where $\alpha$ is the width of the $B$ tiles in the $(j+1)$st staircase and $\beta$ is the height of the $A$
tiles in the $j$th staircase. The $(i+1)$st northwest staircase is a rigid translation of the $i$th staircase through 
$(\gamma,\delta)$ where $\gamma$ is the width of the $A$ tiles in the $i$th northwest
staircase and $\delta$ is the height
of the $B$ tiles in the $(i+1)$st northwest staircase.
\end{lem}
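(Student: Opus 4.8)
The plan is to extract all four assertions of Lemma~\ref{lem:staircase} from the block and lattice structure already established, using the Shear Structure Lemma (Lemma~\ref{lem:shearstruc}), the Basic Units Lemma (Lemma~\ref{lem:basicunits}) and the commutation of neighbor operations (Lemma~\ref{lem:commute}). For the first assertion, that a staircase is determined by the bottom-left corners of the basic units on it, I would observe that each basic unit (say of $AB$ type) is the $AB$ basic unit whose lower-left corner is the prescribed point; by the Rectangular Blocks Proposition (Lemma~\ref{prop:rectangles}) the shears issuing from that corner---hence the $A$ block and the $B$ block sitting on top of the horizontal shear---are uniquely determined by the tiling, and the union of these blocks over the relevant index is exactly the staircase as defined. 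So the set of corners pins down the staircase.

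For the second assertion (constancy of the relevant block dimensions along a staircase), I would argue inductively along the northeast direction: if basic units $(i,j)$ and $(i+1,j)$ are northeast--southwest neighbors, then by the Basic Units Lemma their $A$ blocks share a common vertical shear, forcing equal $A$-block heights, and their $B$ blocks share a common horizontal shear, forcing equal $B$-block widths. Chaining this along all $i$ gives that the $A$-block height and $B$-block width are constant on the $j$th northeast staircase; the northwest statement is the mirror argument using northwest--southeast neighbors, giving constant $A$-block width and $B$-block height.

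For the third assertion---that the $(j+1)$st northeast staircase is the rigid translate of the $j$th through $(-\alpha,\beta)$---I would fix a basic unit $b$ labeled $(i,j)$ and its northwest neighbor, which is labeled $(i,j+1)$. The displacement from the lower-left corner of $b$ to the lower-left corner of its northwest neighbor is, by the geometry in the proof of Lemma~\ref{lem:commute}, the vector going up the left side of $b$'s $A$ block and then left across the top of $b$'s $B$ block: that is $(-w_B', h_A')$ where $w_B'$ is the common $B$-block width on the $(j+1)$st staircase and $h_A'$ is the common $A$-block height on the $j$th staircase, using the constancy just proved. Since this displacement is the same for every $i$ (again by constancy), the entire $(j+1)$st expanded northeast staircase is the translate of the $j$th by this fixed vector; restricting to the defining bottoms-of-$A$ and left-sides-of-$B$ gives the staircase statement. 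The northwest statement is obtained identically by passing to northeast neighbors and reading off the displacement $(\gamma,\delta)$ with $\gamma = w_A$ on the $i$th northwest staircase and $\delta = h_B$ on the $(i+1)$st.

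The main obstacle I anticipate is bookkeeping: being careful that ``the $B$ tiles in the $(j+1)$st staircase'' and ``the $A$ tiles in the $j$th staircase'' are exactly the blocks whose dimensions appear in the translation vector, i.e.\ correctly matching the index shift in the labeling convention (northwest neighbor of $(i,j)$ is $(i,j+1)$) with which staircase supplies which dimension. Everything else is a direct consequence of results already proved, so once the indices are aligned the argument is short; no genuinely new geometric input is needed beyond the Shear Structure and Basic Units lemmas.
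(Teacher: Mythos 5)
Your proposal follows the paper's proof essentially verbatim: the first statement is treated as nearly immediate, the second is read off from the Basic Units Lemma by chaining neighbors, and the translation vector $(-\alpha,\beta)$ is computed from the northwest-neighbor relation and then propagated along the staircase using the constancy just established. One overstatement should be corrected: the $(j+1)$st \emph{expanded} northeast staircase is in general \emph{not} a rigid translate of the $j$th, because the $A$-block heights $b_{j+1}h_A$ and $B$-block widths $c_{j+1}w_B$ change with $j$, so the basic units on consecutive staircases are not congruent. What is rigidly translated is the staircase proper, i.e.\ the union of the bottoms of the $A$ blocks (length $a_iw_A$) and the left sides of the $B$ blocks (length $d_ih_B$), both of which are independent of $j$; this, together with the fixed corner displacement, is exactly what the lemma asserts. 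Your final ``restriction'' step lands on the correct claim, so the argument is sound once the intermediate assertion about expanded staircases is dropped, and your own flagged bookkeeping concern (which staircase supplies $\alpha$ and which supplies $\beta$) is resolved correctly in the formula you state.
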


\begin{proof}
The first statement is clear. The second statement follows from Lemma \ref{lem:basicunits}.

For the last statement we deal with the northeast staircases, the northwest staircases being similar.
The translation vector from the bottom left corner of the $(i,j)$ basic unit
to the bottom left corner of the $(i,j+1)$ basic unit is $(-\alpha,\beta)$, where $\alpha$ is the width of the $B$ block in the 
$(i,j+1)$ basic unit and $\beta$ is the height of the $A$ block in the $(i,j)$ basic unit. By the second statement,
this translation vector is conserved under passing to a northeast or southwest neighbor, so that by the first statement,
the $(j+1)$st northeast staircase is a rigid translation of the $j$th by $(-\alpha,\beta)$.

\end{proof}

We use this to finish the description of the lattice structure of basic units:
\begin{prop}
\label{prop:latticestruct}
Let $A$ and $B$ be incommensurable rectangular tiles and let $x\in Y_{A,B}^\stair$. 
If $x$ contains an $AB$ basic unit, then there is a natural lattice of $AB$
basic units. These basic units cover the plane and have disjoint interiors.
\end{prop}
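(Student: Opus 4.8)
The plan is to combine the labeling scheme with the staircase rigidity to pin down the absolute position of every basic unit as an explicit function of its $\Z^2$-label, and then argue that these positions fill the plane without overlap. By Lemma~\ref{lem:uniquelabel} the labeling is injective, and the Rectangular Blocks Proposition already gives that distinct basic units have disjoint interiors, so \textbf{disjointness of interiors is immediate} and the real content is (a) consistency of the labeling on all of $\Z^2$ and (b) surjectivity, i.e. that the union of basic units is all of $\R^2$.

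First I would fix the $(0,0)$ basic unit to be the one whose closure contains the origin (or, if the origin is in the interior of a tile but not a basic unit in the chosen type, a mild adjustment; since the tiling is a staircase tiling and contains an $AB$ basic unit, the Rectangular Blocks Proposition forces every tile to lie in some $AB$ basic unit, so this is harmless). Using Lemma~\ref{lem:commute} and the northeast--southwest / northwest--southeast inverse relations from Lemma~\ref{lem:basicunits}, the four neighbor operations generate a consistent $\Z^2$-action on the set of basic units: moving along any lattice path from $(0,0)$ to $(i,j)$ lands on the same basic unit, which is exactly the statement that the labeling of the paragraph after Lemma~\ref{lem:commute} is well defined. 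I would record this explicitly as the first step.

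Next I would extract from Lemma~\ref{lem:staircase} the position vectors. Let $p(i,j)\in\R^2$ be the bottom-left corner of the basic unit labeled $(i,j)$. Passing to the northeast neighbor adds a vector strictly in the first quadrant whose horizontal component is the width of the $A$ block (constant along the northwest staircase through that unit) and whose vertical component is $-(\text{height of the }B\text{ block})$... more carefully: the translation from $(i,j)$ to $(i+1,j)$ (northeast) has both components positive, with horizontal part the $A$-block width of row $j$ and vertical part the negative... I would just quote Lemma~\ref{lem:staircase}: along a fixed northeast staircase the step to the northwest neighbor is the constant vector $(-\alpha_j,\beta_{j})$ and along a fixed northwest staircase the step to the northeast neighbor is a constant first-quadrant vector $(\gamma_i,\delta_i)$. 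Hence $p(i,j) = p(0,0) + \sum_{0\le i'<i}(\gamma_{i'},\delta_{i'}) + \sum_{0\le j'<j}(-\alpha_{j'},\beta_{j'})$ (with the obvious sign conventions for negative indices), and in particular $p(i,j)\cdot\vec e_1 \to \pm\infty$ as $i\to\pm\infty$ for fixed $j$ and $p(i,j)\cdot\vec e_2\to\pm\infty$ as $j\to\pm\infty$ for fixed $i$, since each summand is bounded below in absolute value by $\min(w_A,h_A,w_B,h_B)>0$. This monotone, coercive behavior in the two lattice directions is what forces the union of basic units to exhaust $\R^2$: given any point $q\in\R^2$, one shows $q$ lies in the convex-hull-type region swept out by the lattice, hence in some basic unit, because along the $i$-direction the expanded northwest staircases march off to $x=\pm\infty$ and along the $j$-direction the expanded northeast staircases march off to $y=\pm\infty$, and consecutive ones are edge-adjacent (sharing a full shear) by the neighbor construction, leaving no gaps.

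The main obstacle I expect is the surjectivity (covering) step: one must rule out that the lattice of basic units, while infinite and correctly indexed, somehow "spirals" or leaves an unbounded complementary region. The clean way around this is to use that the complement of $\bigcup_{i,j}(\text{basic unit }(i,j))$ is an open set whose boundary consists of tile boundaries but contains no shear endpoints reachable from a basic unit (every shear bordering a basic unit has, on its far side, another basic unit, by the neighbor existence in Lemma~\ref{lem:basicunits}); combined with the fact that every tile of $x$ lies in an $AB$ basic unit (Rectangular Blocks), a nonempty complement would have to be a union of tiles forming its own configuration with no shears on its boundary against the lattice, which the coercivity estimates above forbid. I would present this as: the set $U=\bigcup p(i,j)+(\text{unit})$ is nonempty, open, and relatively closed in $\R^2$ (its boundary is absorbed by neighbors), hence $U=\R^2$ by connectedness. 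That connectedness argument, together with the explicit position formula, completes the proof.
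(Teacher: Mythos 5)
Your proposal is correct and follows essentially the same route as the paper: the lattice structure and injectivity come from Lemmas~\ref{lem:basicunits}, \ref{lem:commute} and \ref{lem:uniquelabel}, disjointness of interiors from the Rectangular Blocks Proposition, and the covering from the fact that consecutive expanded staircases are adjacent and translated by vectors whose components are bounded below by $\min(w_A,h_A,w_B,h_B)$. Your ``nonempty, open and relatively closed, hence all of $\R^2$'' packaging is just a reformulation of the paper's induction showing that $F_j$ contains a $\min(w_B,h_A)$-neighborhood of $F_{j-1}$, and the geometric input needed to justify openness (each basic unit's boundary is absorbed by its neighbors) is exactly the adjacency fact the paper makes explicit via the area between consecutive northeast staircases.
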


\begin{proof}
The lattice structure of basic units was described above in Lemmas 
\ref{lem:basicunits}, \ref{lem:commute} and \ref{lem:uniquelabel}.
We are left with showing that the expanded northeast staircases fill out the entire plane.
Let $(-kw_B,\ell h_A)$ denote the translation vector described in Lemma~\ref{lem:staircase}, 
connecting the $j$th northeast staircase and $(j+1)$st.
The area between the $j$th northeast staircase and its translate through $(0,\ell h_A)$ is the union of the $A$ blocks 
belonging to the $j$th expanded staircase. The area between that intermediate translate and its
translate through $(-kw_B,0)$ (that is the $(j+1)$st northeast staircase)
is the union of the $B$ tiles belonging to the expanded staircase of the 
$(j+1)$st staircase. Hence the area between the 0th and $j$th northeast staircases is
contained in the union of $k$th expanded staircases for $k$ running from $0$ to $j$.

For $j>0$, let $E_j$ denote the $j$th expanded northeast staircase, $\bar E_0$
be the 0th expanded northeast staircase together with all points below and $F_j=\bar E_0\cup \bigcup_{0<k\le j}E_k$.

Since the $j$th staircase is a translate of the $(j-1)$st through a vector of the form $(-kw_B,\ell h_A)$, we see
that $F_j$ includes a $\min(w_B,h_A)$-neighborhood of $F_{j-1}$. It follows that all points above the initial
staircase belong to some $F_j$. A similar argument applies below the initial staircase.
\end{proof}

\subsection{Staircase Measure Structure}
\label{subsec:measures-on-staircases}

Given a tiling $x\in Y_{A,B}^\stair$, we make the convention that the $(0,0)$ basic unit is the unit containing the origin
(with the convention that basic units are closed on the left and the bottom and open on the right 
and the top)
and label the remaining units as described in Proposition \ref{prop:latticestruct}. 
Let the $A$ block of the $(i,j)$ basic unit
have dimensions $a_{ij}w_A\times b_{ij}h_A$ and the $B$ block have dimensions $c_{ij}w_B\times d_{ij}h_B$, where 
the $a_{ij},b_{ij},c_{ij},d_{ij}$ are natural numbers. By Lemma \ref{lem:basicunits}, the $a_{ij}$ and $d_{ij}$ 
do not depend on $j$, while the $b_{ij}$ and $c_{ij}$ do not depend on $i$.

\begin{notation}
\label{notation:sequences}
Given a tiling $x\in Y_{A,B}^\stair$, 
we use this to define sequences $(a_i),(b_j),(c_j),$ and $(d_i)$ by setting $a_i=a_{i0}$, $b_j=b_{0j}$, $c_j=c_{0j}$ and $d_i=d_{i0}$.  
\end{notation}
With this convention, we have that the $(i,j)$ basic block consists of a $a_iw_A\times b_jh_A$ $A$ block and a $c_jw_B\times d_ih_B$ $B$
block.

Using this notation, we define a map $\Phi\colon Y_{A,B}^\stair\to (\N\times\N)^\Z\times (\N\times\N)^\Z\times \R^2$
by

\begin{equation}
\label{def:phi}
\Phi(x)=((a_i,d_i)_{i\in\Z},(b_j,c_j)_{j\in\Z},y),
\end{equation}
where $y$ is the position of the origin relative to the bottom left corner of the $(0,0)$ basic unit. 

In fact, we can specify the range of the map more precisely as
\begin{equation}
\label{def:R}
Q=\{((a_i,d_i)_{i\in\Z},(b_j,c_j)_{j\in\Z},y)\colon y\in \tau_{0,0}\}
\end{equation}
where 
\begin{equation}
\label{def:tau00}
\tau_{0,0}=\big([0,a_0w_A)\times [0,b_0h_A)\big)
\times \big([a_0w_A,a_0w_A+c_0w_B)\times [0,d_0h_B)\big)\}.
\end{equation}

\begin{lem}\label{lem:bijection}
The map $\Phi$ defined in Equation~\eqref{def:phi} is a bijection between $Y_{A,B}^\stair$ and $Q$. 
\end{lem}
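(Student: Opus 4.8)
The plan is to construct an explicit inverse to $\Phi$ and verify that the two compositions are the identity. Given a target datum $((a_i,d_i)_{i\in\Z},(b_j,c_j)_{j\in\Z},y)\in Q$, I would first build a combinatorial lattice of basic units indexed by $\Z^2$: place the $(0,0)$ basic unit with its bottom left corner at the origin, consisting of an $a_0w_A\times b_0h_A$ block of $A$ tiles abutting (on its right) a $c_0w_B\times d_0h_B$ block of $B$ tiles sitting on a common horizontal base. Then recursively attach the northeast neighbor of the $(i,j)$ unit by translating a basic unit of dimensions determined by $a_i,b_{j+1},c_{j+1},d_i$ through the vector $(-c_{j+1}w_B,\,b_j h_A)$, and the northwest neighbor through $(\,a_iw_A,\,d_{i+1}h_B)$, exactly as dictated by Lemma~\ref{lem:staircase}. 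The key point is that these two attachment operations commute (the displacement vectors $(-c_{j+1}w_B,b_jh_A)$ and $(a_iw_A,d_{i+1}h_B)$ depend only on the indices in the way Lemma~\ref{lem:basicunits} requires), so the assignment of a basic unit to each $(i,j)\in\Z^2$ is well-defined. Finally, translate the whole configuration by $-y$ so that the origin sits at position $y$ inside the $(0,0)$ unit.

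Next I would check that this configuration is in fact a tiling of all of $\R^2$ lying in $Y^\stair_{A,B}$. Covering and disjointness of interiors is precisely the content of Proposition~\ref{prop:latticestruct}: the expanded northeast staircases fill the plane and have disjoint interiors, and the same telescoping-neighborhood argument given there shows every point of $\R^2$ lies in exactly one basic unit. That the resulting tiling has no semi-infinite or bi-infinite shears follows because every maximal boundary segment is, by construction, a shear of the type described in the Shear Structure Lemma (Lemma~\ref{lem:shearstruc}) terminating in the interior of a perpendicular shear as in Corollary~\ref{cor:endofshear} — in particular each shear is finite. Call the resulting map $\Psi$. It is then straightforward that $\Phi\circ\Psi$ is the identity on $Q$: the $(0,0)$ basic unit of $\Psi(\text{datum})$ is the one containing the origin by construction, its $A$-block and $B$-block have exactly the prescribed dimensions, the neighbor dimensions reproduce the sequences $(a_i,d_i)$ and $(b_j,c_j)$, and the position of the origin is $y$ by the final translation; one should also note $y\in\tau_{0,0}$ guarantees the origin genuinely lies in the $(0,0)$ unit, so no relabeling occurs.

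For the other composition $\Psi\circ\Phi=\mathrm{id}$ on $Y^\stair_{A,B}$, the work has essentially been done in Sections~\ref{subset:block-structures}--\ref{subsec:lattice}. Given $x\in Y^\stair_{A,B}$, the Rectangular Blocks Proposition (Prop.~\ref{prop:rectangles}) shows $x$ is partitioned into rectangular blocks of aligned $A$'s and aligned $B$'s with shears as boundaries; the Basic Units Lemma and Lemma~\ref{lem:commute} organize these into $AB$ basic units with a commuting neighbor structure; Lemma~\ref{lem:uniquelabel} and Prop.~\ref{prop:latticestruct} give the $\Z^2$-labeling covering the plane. (Here one uses that $x$ contains at least one $AB$ basic unit rather than a $BA$ one; this can be arranged by the convention that, say, the basic unit containing the origin is of $AB$ type, or one observes that the $AB$ and $BA$ cases are handled symmetrically — I would state the bijection for the $AB$-type staircase tilings and note the $BA$ case is identical.) The dimensions $(a_{ij},b_{ij},c_{ij},d_{ij})$ extracted from $x$ satisfy the independence-of-index properties from Lemma~\ref{lem:basicunits}, yielding well-defined sequences, and the staircase displacement vectors of Lemma~\ref{lem:staircase} show that the reconstruction $\Psi$ applied to $\Phi(x)$ reproduces the basic unit lattice of $x$ exactly, hence reproduces $x$ itself since the tiling is determined by its basic unit decomposition.

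The main obstacle I anticipate is not any single hard step but the bookkeeping in verifying that $\Psi$ is well-defined, i.e.\ that walking around a lattice loop (northeast then northwest versus northwest then northeast) returns to the same placed basic unit with the same coordinates. This is the reconstruction-side analogue of Lemma~\ref{lem:commute}, and it reduces to checking the vector identity that the two compositions of displacement vectors agree — which they do precisely because $a_{ij}$ and $d_{ij}$ are independent of $j$ while $b_{ij}$ and $c_{ij}$ are independent of $i$. I would isolate this as the one computation worth writing out, and treat everything else as an assembly of the already-proven structural lemmas.
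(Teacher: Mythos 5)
Your proposal is correct and follows essentially the same route as the paper: the paper proves injectivity by observing that the sequences determine the staircases (your $\Psi\circ\Phi=\mathrm{id}$ direction) and surjectivity by building the staircases and blocks from the data (your explicit construction of $\Psi$), both resting on Lemmas~\ref{lem:basicunits}--\ref{lem:staircase} and Proposition~\ref{prop:latticestruct}. Your version is more explicit about well-definedness of the reconstruction around lattice loops, but this is the same argument the paper compresses into a few lines.
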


\begin{proof}
To see that $\Phi$ is injective, notice that 
the principal northeast staircase is determined by the sequence $(a_i,d_i)$. The translation from the
 $j$th northeast staircase
to the $(j+1)$st is $(-c_{j+1}w_B,b_jh_A)$. Knowing the $j$th staircase and the translation vectors
 to the $(j-1)$st and $(j+1)$st
northeast staircases determines the dimensions of the $A$ and $B$ blocks (from the formulae above). 
This means that if two tilings give rise to the same sequences, then they have the same
staircases. If they have the same $y$ value in addition, then the origin is at the same point in the $(0,0)$ tile.

To see that $\Phi$ is surjective, let $r=((a_i,d_i)_{i\in\Z}, (b_j,c_j)_{j\in\Z}, y)\in Q$. 
As described, starting from the sequences
one can build the staircases and the blocks. They always cover all of $\R^2$ as 
previously noted. Hence one can always find
$x\in Y_{A,B}^\stair$ such that $\Phi(x)=r$. 
\end{proof}

We use $\Phi$ to give a complete description of the invariant measures on $Y_{A,B}^\stair$. 
Let $Z=(\N\times\N)^\Z\times (\N\times\N)^\Z$, so that $Q$ as defined in~\eqref{def:R} becomes
$$Q=\{(z,y)\colon z\in Z,y\in \tau_{0,0}(z)\},
$$
where
$\tau_{0,0}$ is 
defined as in~\eqref{def:tau00}.   Write $Z=W_1\times W_2$, where each $W_i$ is a copy of $(\N\times\N)^\Z$.
Let $S_1(w,w')=(\sigma(w),w')$ and $S_2(w,w')=(w,\sigma(w'))$ where $\sigma$ is the left shift. These maps
generate a natural $\Z^2$ action on $Z$.
Since $\Phi$ is a bijection between $Y^\stair_{A,B}$ and $Q$, the
$\R^2$ action on $Y^\stair_{A,B}$ induces an $\R^2$ action on $Q$, preserving the measure $\nu=\Phi^*\mu$.
For $x\in Y^\stair_{A,B}$ and $\vec  v\in\R^2$, $S^{\vec  v}x$ has the same basic units, but relabeled
by adding a constant (the label given to the previous $(0,0)$ block). Of course, the new $y$ coordinate
is the position of the origin in the new $(0,0)$ block after the tiling is shifted. It follows that for $(z,y)\in Q$, 
$S^{\vec  v}(z,y)$ is of the form $(S_1^iS_2^jz,y')$, where the $(i,j)$ is the labeling of the block in $x$
that contains the point with coordinates $\vec v$.

Since the $\R^2$ action on $Q$ locally acts as a translation, we deduce that on the $\R^2$ fibers in $Q$,
the conditional measure of $\nu$ is just Lebesgue measure restricted to $\tau_{0,0}$.
We then define maps between measures on $Q$ and measures on $Z$ and claim that they
establish (up to normalization)
a bijection between the $\R^2$-invariant measures on $Q$ and certain explicitly-described 
$\Z^2$-invariant measures on $Z$.

Given an $\R^2$ invariant measure $\nu$ on $Q$, we define a measure on $Z$ by
\begin{equation}
\label{def:psi1}
\Psi_1(\nu)(C)=\int_{(C\times \R^2)\cap Q} \frac{1}{\text{Area}(\tau_{0,0}(z))}\,d\nu(z,y).
\end{equation}

Conversely, given a $\Z^2$ invariant measure $\lambda$ on $Z$, we define a measure on $Q$
by specifying it on product sets as 
\begin{equation}
\Psi_2(\lambda)(C\times D)=\int_C \text{Area}(\tau_{0,0}(z)\cap D)\,d\lambda(z).\label{eq:psi2}
\end{equation}

\begin{prop}\label{prop:inv}
If $\nu$ is an $\R^2$-invariant probability measure on $Q$, then $\Psi_1(\nu)$ is a finite $\Z^2$ invariant measure on $Z$.
If $\nu$ is ergodic, then so is $\Psi_1(\nu)$.

If $\lambda$ is a $\Z^2$-invariant measure on $Z$ satisfying $\int \text{Area}(\tau_{0,0}(z))\,d\lambda
(z)<\infty$, then $\Psi_2(\lambda)$ is a finite $\R^2$ invariant measure on $Q$. If $\lambda$ is ergodic, so is
$\Psi_2(\lambda)$.

The maps $\Psi_1$ and $\Psi_2$ are inverses of each other.
\end{prop}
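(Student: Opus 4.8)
The plan is to realise $(Q,\{S^{\vec v}\}_{\vec v\in\R^2})$ as a suspension of the $\Z^2$-system $(Z,S_1,S_2)$ and then to apply the standard correspondence between invariant measures of a suspension and invariant measures of its base. Concretely I would introduce the ``universal cover'' $\hat Q=Z\times\R^2$, with the $\R^2$-action by translation in the second coordinate, and with a commuting $\Z^2$-action of the form $\vec k\cdot(z,y)=(S^{\vec k}z,\,y+c(\vec k,z))$, where $S^{\vec k}=S_1^{k_1}S_2^{k_2}$ and $c\colon\Z^2\times Z\to\R^2$ is the cocycle recording the displacement between a basic unit and its neighbours. That $c$ is a genuine cocycle over $(Z,S_1,S_2)$ depending on only finitely many coordinates of $z$ follows from the Basic Units Lemma (Lemma~\ref{lem:basicunits}) and Lemma~\ref{lem:commute}, and that the translates $\{\tau_{0,0}(S^{\vec k}z)+c(\vec k,z)\colon\vec k\in\Z^2\}$ partition $\R^2$ is exactly Proposition~\ref{prop:latticestruct}. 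Thus $Q$ is a measurable fundamental domain for a free $\Z^2$-action on $\hat Q$ commuting with the $\R^2$-action, $\hat Q/\Z^2\cong Q$ with the residual $\R^2$-actions agreeing, and (by pushing a measure around the $\Z^2$-orbit and summing, resp.\ by restricting to $Q$) $\R^2$-invariant measures on $Q$ correspond bijectively to measures on $\hat Q$ invariant under both actions and finite on $Q$, ergodic measures matching ergodic measures.

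The core is the description of the bi-invariant measures on $\hat Q=Z\times\R^2$. Any $\sigma$-finite $\R^2$-invariant measure there disintegrates over $Z$ into translation-invariant measures on the fibres $\{z\}\times\R^2$, hence equals $\mu_Z\times\mathrm{Leb}$ for some $\sigma$-finite $\mu_Z$ on $Z$; and $\mu_Z\times\mathrm{Leb}$ is $\Z^2$-invariant precisely when $\mu_Z$ is, the translation part of each $S_i$ being absorbed by the invariance of Lebesgue measure. Restricting $\mu_Z\times\mathrm{Leb}$ to the fundamental domain $Q$ yields exactly the measure $\Psi_2(\mu_Z)$ of \eqref{eq:psi2}: its $z$-marginal is $\mathrm{Area}(\tau_{0,0}(\cdot))\,d\mu_Z$, and its conditional on the fibre over $z$ is normalised Lebesgue measure on $\tau_{0,0}(z)$. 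In the other direction, the fact recorded just before the proposition --- that for any $\R^2$-invariant $\nu$ on $Q$ the conditionals on the fibres of $\pi\colon Q\to Z$ are normalised Lebesgue --- means $\nu=\int_Z\bigl(\delta_z\times\tfrac{1}{\mathrm{Area}(\tau_{0,0}(z))}\mathrm{Leb}|_{\tau_{0,0}(z)}\bigr)\,d\bar\nu(z)$ with $\bar\nu=\pi_*\nu$, so that the $\mu_Z$ for which $\mu_Z\times\mathrm{Leb}|_Q=\nu$ is forced to be $\mu_Z=\tfrac{1}{\mathrm{Area}(\tau_{0,0})}\,\bar\nu$, which is $\Psi_1(\nu)$ as in \eqref{def:psi1}. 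Since $\mu_Z\mapsto\mu_Z\times\mathrm{Leb}|_Q$ is $\Psi_2$, this simultaneously proves $\Psi_2\circ\Psi_1=\mathrm{id}$, $\Psi_1\circ\Psi_2=\mathrm{id}$, the $\Z^2$-invariance of $\Psi_1(\nu)$, and the $\R^2$-invariance of $\Psi_2(\lambda)$. For finiteness I would note that every basic unit contains an $A$ tile and a $B$ tile, so $\mathrm{Area}(\tau_{0,0}(z))\ge w_Ah_A+w_Bh_B=:m_0>0$ for every $z$; hence $\Psi_1(\nu)(Z)\le\nu(Q)/m_0<\infty$ when $\nu$ is a probability measure, while $\Psi_2(\lambda)(Q)=\int_Z\mathrm{Area}(\tau_{0,0})\,d\lambda$ is finite by hypothesis on $\lambda$ (which, as $\mathrm{Area}\ge m_0$, already forces $\lambda(Z)<\infty$).

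For the ergodicity statements I would argue through the same picture. If $C\subseteq Z$ is $\Z^2$-invariant then $\pi^{-1}(C)\subseteq Q$ is $\R^2$-invariant, since the $\R^2$-action on $Q$ only moves $z$ within its $\Z^2$-orbit; ergodicity of $\nu$ gives $\bar\nu(C)=\nu(\pi^{-1}(C))\in\{0,\nu(Q)\}$, and since $\bar\nu$ and $\Psi_1(\nu)$ are mutually absolutely continuous (their Radon--Nikodym derivative is $\mathrm{Area}(\tau_{0,0})\in[m_0,\infty)$, which vanishes nowhere) this gives $\Psi_1(\nu)(C)\in\{0,\Psi_1(\nu)(Z)\}$. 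Conversely, let $E\subseteq Q$ be $\R^2$-invariant and let $\lambda$ be ergodic with $\Psi_2(\lambda)=\nu$. Each $\mathrm{int}(\tau_{0,0}(z))$ is connected (it is the union of two rectangles meeting along a segment), and the $\R^2$-flow moves $y$ freely through it before crossing into an adjacent basic unit, so $E$ agrees mod $\nu$-null with $\pi^{-1}(C)$ for some $C\subseteq Z$; invariance of $E$ under the ``crossing'' translations, which carry the fibre over $z$ partly into the fibres over the finitely many lattice-neighbours $S^{\vec m}z$ with $\vec m$ ranging over a generating set of $\Z^2$, forces $C$ to be $\Z^2$-invariant mod $\bar\nu$-null (equivalently $\lambda$-null) sets, so $\lambda(C)\in\{0,\lambda(Z)\}$ and therefore $\nu(E)=\int_C\mathrm{Area}(\tau_{0,0})\,d\lambda\in\{0,\nu(Q)\}$.

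The step I expect to be the real obstacle is the set-up in the first paragraph: checking that the neighbour-displacements genuinely assemble into a $\Z^2$-cocycle over $(Z,S_1,S_2)$ and that the resulting translated fundamental domains tile $\R^2$, so that $Q$ really is a fundamental domain for a free $\Z^2$-action commuting with the $\R^2$-action --- this is where the geometric work of Sections~\ref{subset:block-structures}--\ref{subsec:lattice} (the Basic Units Lemma, Lemma~\ref{lem:commute} and Proposition~\ref{prop:latticestruct}) is really used, and the only place incommensurability enters. If one prefers to avoid the universal cover, the same difficulty reappears as the direct verification that $\Psi_2(\lambda)$ is $\R^2$-invariant: it is enough to check invariance under $S^{\vec v}$ with $\|\vec v\|$ small, partitioning $\tau_{0,0}(z)$ according to which basic unit $y+\vec v$ lands in; on the bulk piece $S^{\vec v}$ is an honest translation preserving fibrewise Lebesgue measure, while on the boundary pieces the $\Z^2$-invariance of $\lambda$ together with the tiling property makes the mass moved into the neighbouring fibres cancel with that moved out. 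Everything else --- the two explicit formulas, the finiteness assertions, and the inversion identities --- is then routine bookkeeping.
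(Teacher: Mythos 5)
Your proof is correct, but it takes a genuinely different route from the paper's. You package everything into the suspension formalism: lift to $\hat Q=Z\times\R^2$ with the displacement cocycle as deck transformations, characterize the bi-invariant measures as products $\mu_Z\times\mathrm{Leb}$ with $\mu_Z$ a $\Z^2$-invariant measure, and read off both invariance statements, both finiteness statements, and the inversion identities from that single structural fact. The paper instead verifies each claim by hand: the $\Z^2$-invariance of $\Psi_1(\nu)$ comes from an orbit-counting argument (comparing the number of visits of an $\R^2$-orbit segment to the cylinder events $E$ and $E'$, normalized by the areas $A_{00}$ and $A_{10}$, showing the discrepancy is $O(N)$ and dividing by $N^2$), while the $\R^2$-invariance of $\Psi_2(\lambda)$ is checked by refining $C$ until it determines the staircases out to radius $\|\vec v\|$ and decomposing $S^{\vec v}(C\times D)$ explicitly into shifted cylinders with translated fibre pieces --- essentially the ``direct verification'' you sketch in your final paragraph as the alternative. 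Your approach buys a more unified argument and makes $\Psi_1=\Psi_2^{-1}$ transparent rather than ``straightforward from the definitions''; its cost is the setup, and you correctly locate where the geometric input (Lemmas~\ref{lem:basicunits} and~\ref{lem:commute}, Proposition~\ref{prop:latticestruct}) enters. The one step you pass over quickly is the assertion that a $\sigma$-finite translation-invariant measure on $Z\times\R^2$ is automatically a product with Lebesgue measure: as stated this needs a word of care (the projection to $Z$ of such a measure is not $\sigma$-finite, so one should disintegrate over $Z\times[0,1)^2$ or invoke uniqueness of Haar measure on the fibres), but this is exactly the fact the paper itself asserts without proof immediately before the proposition, so it is not a gap relative to the paper's own standard of detail.
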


\begin{proof}
Since the areas are bounded below in the definition~\eqref{def:psi1},
it follows that $\Psi_1(\nu)$ is finite. 
To show that $\Psi_1(\nu)$ is invariant, let $C\times D$ be a cylinder set in $Z$,
where $C$ and $D$ are cylinder sets specifying 
at least the $0$th and $1$st coordinates of $W_1$ and $W_2$. 
 Maintaining the notation of~\ref{notation:sequences}, note that for all $z\in C\times D$, $\text{Area}(\tau_{0,0}(z))$ is $A_{00}=a_0b_0w_Ah_A+c_0d_0w_Bh_B$, 
the area of the $(0,0)$ basic unit in $z$, while for $z\in S_1(C\times D)$, 
$\text{Area}(\tau_{0,0}(z))$ is $A_{10}=a_1b_0w_Ah_A+c_0d_1w_Bh_B$.
Hence we have that 
$\Psi_1(\nu)(C\times D)=\nu(E)/A_{00}$ and $\Psi_1(\nu)(S_1(C\times D))=\nu(E')/A_{10}$,
where $E=C\times D\times \R^2$ and $E'=\sigma(C)\times D\times\R^2$. However, by considering the 
picture in tiling space, we see that
\begin{equation*}
\frac{m(\{\|v\|\le N\colon S^{\vec  v}(z,y)\in E\})}{A_{00}}
-\frac{m(\{\|v\|\le N\colon S^{\vec  v}(z,y)\in E'\})}{A_{10}}=O(N),
\end{equation*}
where $m$ is Lebesgue measure on $\R^2$. Integrating over $\nu$, exchanging the order of integration,
dividing by $N^2$ and taking the limit,
we see that $\nu(E)/A_{00}=\nu(E')/A_{10}$ and thus $\Psi_1(\nu)$ is $S_1$-invariant. A similar argument
establishes $S_2$-invariance.

To show that $\Psi_2(\lambda)$ is invariant, let $C\times D\subset Q$ where $C$ is a cylinder set
and let $\vec  v\in \R^2$. 
By decomposing $C$ further into cylinder sets if necessary, we may assume that $C$ determines the staircases
out to a radius of at least $\|\vec  v\|+2(w_A+w_B+h_A+h_B)$. Let $z\in C$ and let $y=\Phi^{-1}(z,0)$.
Let $g_1,g_2,\ldots, g_n$ be an enumeration of the basic units in the tiling $z$ that are determined by the
cylinder set $C$. Let $\vec  u_1,\ldots,\vec  u_n$ be the sequence of their bottom left vertices and let
$(i_1,j_1),\ldots,(i_n,j_n)$ be the labels of the basic units as given in Proposition \ref{prop:latticestruct}.

Let $D_k=\{\vec  u\in D\colon \vec  u+\vec  v\in g_k\}$. Then $S^{\vec  v}(C\times D)
=\bigcup_{k=1}^n S_1^{i_k}S_2^{j_k}C\times (D_k+\vec  v-\vec  u_k)$, allowing us to deduce
the invariance of $\Psi_2(\lambda)$ from the shift-invariance of $\nu$ and the translation invariance of
Lebesgue measure.

Now suppose that $\nu$ is ergodic and that $E$ is a $\Z^2$-invariant subset of $Z$. Then 
$(E\times\R^2)\cap Q$ is an $\R^2$-invariant subset of $Q$. It follows that $(E\times\R^2)\cap Q$ has
zero measure or full measure with respect to $\nu$. It follows from the definition~\eqref{def:psi1} of $\Psi_1$ that 
$\Psi_1(\nu)(E)$ is also of zero measure or full measure and so $\Psi_1(\nu)$ is ergodic.

Similarly, suppose $\lambda$ is ergodic and let $F$ be an $\R^2$-invariant subset of $Q$.
Since $F$ is $\R^2$-invariant, we deduce that $F=\{(z,y)\in Q\colon z\in E\}$ for a subset $E$ of $Z$.
By invariance of $F$, it follows that $E$ is $\Z^2$-invariant, and hence has full measure or zero measure.
The same applies to $F$.

Finally, the fact that $\Psi_1(\Psi_2(\lambda))=\lambda$ and $\Psi_2(\Psi_1(\nu))=\nu$ follows in a straightforward manner 
from the definitions.
\end{proof}

We now identify the ergodic $\Z^2$-invariant measures on $Z$.
\begin{lem}\label{lem:Z2inv}
The ergodic $\Z^2$-invariant measures on $Z$ are precisely those measures that are the product of a
pair of $\Z$-invariant ergodic measures on $(\N\times\N)^\Z$.
\end{lem}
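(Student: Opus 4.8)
\textbf{Proof proposal for Lemma~\ref{lem:Z2inv}.}

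The plan is to exploit the fact that the $\Z^2$ action on $Z=W_1\times W_2$ is a direct product action: $S_1$ acts only on the $W_1$ coordinate by the shift and $S_2$ acts only on the $W_2$ coordinate by the shift. First I would check the easy direction: if $\lambda=\lambda_1\times\lambda_2$ with each $\lambda_i$ a $\sigma$-ergodic probability measure on $(\N\times\N)^\Z$, then $\lambda$ is $\Z^2$-invariant, and its ergodicity under the $\Z^2$ action is the standard fact that a product of ergodic measures for commuting transformations, where each factor acts on its own coordinate, is ergodic for the product transformation group; this follows from a Fubini argument applied to a putative invariant set, or by noting that the $L^2$ spaces tensor and the only joint invariant vectors are constants.

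For the converse, suppose $\lambda$ is an ergodic $\Z^2$-invariant measure on $Z$. Let $\lambda_1=(\pi_1)_*\lambda$ and $\lambda_2=(\pi_2)_*\lambda$ be its marginals on $W_1$ and $W_2$. Since $S_1$ fixes the $W_2$ coordinate pointwise, the set of $W_2$-invariant functions is exactly the set of functions of the $W_2$ coordinate, so $\sigma$-invariant sets in $W_2$ pull back to $S_2$-invariant (indeed $\Z^2$-invariant) sets in $Z$; ergodicity of $\lambda$ forces these to be null or co-null, so $\lambda_2$ is $\sigma$-ergodic, and symmetrically $\lambda_1$ is $\sigma$-ergodic. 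It then remains to show $\lambda=\lambda_1\times\lambda_2$. The cleanest route is the ergodic theorem: fix bounded continuous (or cylinder) functions $f$ on $W_1$ and $g$ on $W_2$. Because $S_1$ acts trivially on $W_2$, for $\lambda$-a.e.\ point $(w,w')$ the Birkhoff averages of $f$ along the $S_1$-orbit converge, and since $\lambda_1$ is $\sigma$-ergodic and is the marginal, one can argue that this limit equals $\int f\,d\lambda_1$ for $\lambda$-a.e.\ point. (This uses that the $S_1$-ergodic components of $\lambda$ project to $\sigma$-ergodic components of $\lambda_1$, which is a single measure.) Then averaging $f(S_1^n(w,w'))g(w')$ over $n$ and using dominated convergence gives $\int f g\,d\lambda=\int\big(\int f\,d\lambda_1\big)g\,d\lambda=\big(\int f\,d\lambda_1\big)\big(\int g\,d\lambda_2\big)$, which identifies $\lambda$ with $\lambda_1\times\lambda_2$ on a generating family of functions, hence everywhere.

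The main obstacle is the step asserting that the $S_1$-Birkhoff average of $f$ is $\lambda$-a.e.\ equal to $\int f\,d\lambda_1$: a priori $\lambda$ need not be $S_1$-ergodic, only $\Z^2$-ergodic, so one must pass to the ergodic decomposition of $\lambda$ under the single transformation $S_1$ and observe that, because $S_1$ leaves the $W_2$-coordinate fixed, the $S_1$-invariant $\sigma$-algebra is (mod $\lambda$) contained in the $\sigma$-algebra generated by the $W_2$ coordinate together with the tail; the key point is that the $S_1$-Birkhoff average of $f$ is measurable with respect to this and has the correct integral against any bounded function of $w'$. An alternative, possibly smoother, approach avoids this: note any $\Z^2$-invariant $\lambda$ disintegrates over $\lambda_2$ as $\lambda=\int \kappa_{w'}\,d\lambda_2(w')$ with each $\kappa_{w'}$ a measure on $W_1$; $\Z^2$-invariance forces $\kappa_{w'}$ to be $\sigma$-invariant for $\lambda_2$-a.e.\ $w'$ and forces $w'\mapsto\kappa_{w'}$ to be $\sigma$-equivariant, so by $\sigma$-ergodicity of $\lambda_2$ this map is $\lambda_2$-a.e.\ constant, say equal to $\lambda_1$, giving $\lambda=\lambda_1\times\lambda_2$ directly; ergodicity of $\lambda$ then forces $\lambda_1$ (and $\lambda_2$) to be $\sigma$-ergodic by the first paragraph's criterion in reverse. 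I would present this second argument as the main proof, as it is the most self-contained.
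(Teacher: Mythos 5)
Your proposal is correct, and your preferred (disintegration) argument is a genuinely different route from the paper's. The easy direction is essentially identical in both: a Fubini/slicing argument showing that a $\Z^2$-invariant set has slices of measure $0$ or $1$. For the converse, the paper does not disintegrate; it applies the pointwise ergodic theorem for the $\Z^2$-action to the indicator of a product cylinder $C\times D$ and observes the algebraic identity
\begin{equation*}
\frac{1}{N^2}\sum_{0\le i,j<N}\mathbf 1_{C\times D}(\sigma^iw,\sigma^jw')
=\Bigl(\frac1N\sum_{0\le i<N}\mathbf 1_{C}(\sigma^iw)\Bigr)\Bigl(\frac1N\sum_{0\le j<N}\mathbf 1_{D}(\sigma^jw')\Bigr),
\end{equation*}
where each factor on the right is itself a square average of $\mathbf 1_{C\times W_2}$ (resp.\ $\mathbf 1_{W_1\times D}$), so all three averages converge $\lambda$-a.e.\ by $\Z^2$-ergodicity of $\lambda$ alone, yielding $\lambda(C\times D)=\lambda_1(C)\lambda_2(D)$. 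This is exactly how the paper sidesteps the obstacle you correctly flag in your first sketch (that $\lambda$ need not be $S_1$-ergodic, so one-dimensional Birkhoff averages of $f(w)$ need not converge to $\int f\,d\lambda_1$): by rewriting the one-dimensional average as a two-dimensional one, only $\Z^2$-ergodicity is ever invoked. Your disintegration argument is equally valid --- uniqueness of the disintegration gives $\sigma_*\kappa_{w'}=\kappa_{w'}$ and $\kappa_{\sigma w'}=\kappa_{w'}$ a.e., and ergodicity of $\lambda_2$ (which you correctly establish beforehand from $\Z^2$-invariance of sets of the form $W_1\times E$, so there is no circularity despite the slightly redundant closing sentence) forces $w'\mapsto\kappa_{w'}$ to be a.e.\ constant into the Polish space of measures on $W_1$. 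It trades the $\Z^2$ pointwise ergodic theorem for the machinery of conditional measures; the paper's version is more self-contained at the level of tools used elsewhere in the argument, while yours is arguably more conceptual and generalizes immediately to any direct product of two actions each acting on its own coordinate.
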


\begin{proof}
	Let $\lambda_1$ and $\lambda_2$ be ergodic shift-invariant measures on $W_1$ and $W_2$.
	Then $\lambda=\lambda_1\otimes \lambda_2$, defined on cylinder sets by $\lambda(C\times D)=
	\lambda_1(C)\lambda_2(D)$, is clearly invariant under each of the $S_i$ for $i=1,2$.
	If $A$ is an invariant set under the $\Z^2$-action, then letting $A_w=\{v\in W_2\colon
	(w,v)\in A\}$, we see that $A_w$ is an $S_2$-invariant set, and hence of $\lambda_2$-measure 0 or 1 for 
	$\lambda_1$-almost every $w$. Letting $B=\{w\colon \lambda(A_w)=1\}$, this is an $S_1$-invariant set and hence
	of $\lambda_1$-measure 0 or 1. Thus $\lambda(A)$ is 0 or 1 and so $\lambda$ 
	is ergodic.  
	
	Conversely, let $\lambda$ be an ergodic $\Z^2$-invariant measure on $Z$. Let $\lambda_1$ and $\lambda_2$
	be the (necessarily invariant and ergodic) marginals of $\lambda$. 
	Let $C$ and $D$ be cylinder sets in $W_1$ and $W_2$. Then for $\lambda$-almost every $(w,w')\in Z$, we have
	\begin{align*}
	&\lambda(C\times D)=\lim_{N\to\infty}\frac{1}{N^2}\sum_{0\le i,j<N}\mathbf 1_{C\times D}(\sigma^iw,\sigma^jw')\\
	&=\lim_{N\to\infty}\left(\frac{1}N\sum_{0\le i<N}\mathbf 1_{C}(\sigma^iw)\frac1N
	\sum_{0\le j<N}\mathbf 1_D(\sigma^jw')\right)\\
	&=\lim_{N\to\infty}\frac{1}{N^2}\sum_{0\le i,j<N}\mathbf 1_{C\times W_2}(\sigma^iw,\sigma^jw')
	\lim_{N\to\infty}\frac{1}{N^2}\sum_{0\le i,j<N}\mathbf 1_{W_1\times D}(\sigma^iw,\sigma^jw')\\
	&=\ \lambda_1(C)\lambda_2(D),
	\end{align*}
	and so  $\lambda=\lambda_1\otimes\lambda_2$.
\end{proof}

Maintaining the notation of~\ref{notation:sequences}, this leads to a precise criterion
for an ergodic invariant probability measure on $Z$ 
corresponding to a finite invariant probability measure on $Q$:
\begin{lem}\label{lem:finiteness}
	Let $\lambda=\lambda_1\otimes \lambda_2$ be an ergodic $\Z^2$-invariant probability measure on $Z$.
	Then $\Psi_2(\lambda)$ is a finite measure if and only if $a_0(w)$ and $d_0(w)$ are $\lambda_1$-integrable
	and $b_0(w')$ and $c_0(w')$ are $\lambda_2$-integrable.
\end{lem}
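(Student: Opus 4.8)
The plan is to compute the total mass $\Psi_2(\lambda)(Q)$ directly from the definition~\eqref{eq:psi2} and show it is finite precisely under the stated integrability hypotheses. Taking $C=Z$ and $D=\R^2$ in~\eqref{eq:psi2}, we get
\begin{equation*}
\Psi_2(\lambda)(Q)=\int_Z \text{Area}(\tau_{0,0}(z))\,d\lambda(z)
=\int_Z\bigl(a_0(z)b_0(z)w_Ah_A+c_0(z)d_0(z)w_Bh_B\bigr)\,d\lambda(z),
\end{equation*}
using the formula for the area of the $(0,0)$ basic unit recorded just before Notation~\ref{notation:sequences}. So the entire problem reduces to deciding when the two integrals $\int a_0b_0\,d\lambda$ and $\int c_0d_0\,d\lambda$ are finite.

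The key point is that $\lambda=\lambda_1\otimes\lambda_2$ by Lemma~\ref{lem:Z2inv}, and that by Lemma~\ref{lem:basicunits} (as transcribed into the sequence notation of~\ref{notation:sequences}) the factors $a_0$ and $d_0$ depend only on the $W_1$-coordinate while $b_0$ and $c_0$ depend only on the $W_2$-coordinate. Hence by Fubini/Tonelli the integrands factor:
\begin{equation*}
\int_Z a_0b_0\,d\lambda=\Bigl(\int_{W_1}a_0\,d\lambda_1\Bigr)\Bigl(\int_{W_2}b_0\,d\lambda_2\Bigr),\qquad
\int_Z c_0d_0\,d\lambda=\Bigl(\int_{W_2}c_0\,d\lambda_2\Bigr)\Bigl(\int_{W_1}d_0\,d\lambda_1\Bigr).
\end{equation*}
Since all four of $a_0,b_0,c_0,d_0$ take values in $\N$, each individual marginal integral is a well-defined element of $(0,\infty]$, in fact at least $1$. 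Thus a product of two such integrals is finite if and only if both factors are finite. First I would therefore argue the ``if'' direction: if all four marginal integrals are finite, both products above are finite, their sum is finite, and so $\Psi_2(\lambda)(Q)<\infty$. For the ``only if'' direction, suppose $\Psi_2(\lambda)(Q)<\infty$; then both $\int a_0b_0\,d\lambda$ and $\int c_0d_0\,d\lambda$ are finite (they are non-negative and sum to something finite), so each of the four marginal integrals is finite because it is bounded above by a finite product in which the complementary factor is at least $1$.

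The one genuine subtlety, rather than an obstacle, is the measurability and well-definedness of the sequence-valued functions $a_0,b_0,c_0,d_0$ as functions on $Z$: these were defined in~\ref{notation:sequences} via the convention fixing the $(0,0)$ basic unit, and one should note they are just the coordinate projections on $(\N\times\N)^\Z$ (composed with the identification $Z=W_1\times W_2$), hence trivially measurable and depending on a single coordinate. Everything else is routine: Tonelli applies because the integrands are non-negative, and the normalization caveat (``$\Psi_2(\lambda)$ is a finite \emph{measure}'', not necessarily a probability measure) matches the hypothesis in Proposition~\ref{prop:inv}. I do not anticipate a hard step here; the content is entirely in unwinding the area formula and invoking the product structure from Lemmas~\ref{lem:Z2inv} and~\ref{lem:basicunits}.
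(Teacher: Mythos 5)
Your proposal is correct and follows essentially the same route as the paper: reduce finiteness of $\Psi_2(\lambda)$ to finiteness of $\int \text{Area}(\tau_{0,0}(z))\,d\lambda(z)$, expand the area as $a_0b_0w_Ah_A+c_0d_0w_Bh_B$, and factor the integrals using the product structure $\lambda=\lambda_1\otimes\lambda_2$. Your added remark that each marginal integral is at least $1$ (so a product is finite iff both factors are) just makes explicit the step the paper leaves implicit in its final "hence."
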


\begin{proof}
	A necessary and sufficient condition for $\Psi_2(\lambda)$ to be a finite measure is $\int
	\text{Area}(\tau_{0,0}(z))\,d\lambda(z)<\infty$. Since 
	$$
	\text{Area}(\tau_{0,0})(w,w')=a_0(w)b_0(w')w_Ah_A
	+c_0(w')d_0(w)w_Bh_B,
	$$
	 we see that
	\begin{align*}
	&\int \text{Area}(\tau_{0,0}(z))\,d\lambda(z)\\
	&=w_Ah_A\int a_0(w)b_0(w')\,d\lambda(w,w')+
	w_Bh_B\int c_0(w')d_0(w)\,d\lambda(w,w')\\
	&=w_Ah_A\int a_0(w)\,d\lambda_1(w)\int b_0(w')\,d\lambda_2(w')\\
	&\ \ +w_Bh_B\int c_0(w')\,d\lambda_2(w')\int d_0(w)\,d\lambda_1(w).
	\end{align*}
	Hence, the $\lambda_1$-integrability
	of $a_0$ and $d_0$ and the $\lambda_2$-integrability of $b_0$ and $c_0$ 
	is a necessary and sufficient condition for finiteness of $\Psi_2(\lambda)$.  
\end{proof}

Combining Proposition~\ref{prop:inv} and Lemmas~\ref{lem:Z2inv} and~\ref{lem:finiteness}, we have shown the structure for staircase measures:
\begin{thm}
\label{thm:staircasemeasurestructure}
Let $A$ and $B$ be incommensurable rectangular basic tiles. The ergodic $\R^2$-invariant
probability measures on $Y_{A,B}^\stair$ are in bijection with the collection of products
$\lambda_1\otimes\lambda_2$ of pairs of ergodic measures on $(\N\times \N)^\Z$ satisfying
integrability of the coordinate functions.
\end{thm}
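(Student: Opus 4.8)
The plan is to show that Theorem~\ref{thm:staircasemeasurestructure} is essentially an immediate consequence of the three results accumulated just before it, and that the only thing to verify is that the ingredients fit together into a single bijection at the level of ergodic probability measures. First I would invoke Lemma~\ref{lem:bijection}, which gives a genuine bijection $\Phi\colon Y_{A,B}^\stair\to Q$, together with the discussion preceding Proposition~\ref{prop:inv}, to transport the $\R^2$-action on $Y_{A,B}^\stair$ to an $\R^2$-action on $Q$. Because $\Phi$ is a Borel isomorphism intertwining the actions, the ergodic $\R^2$-invariant probability measures on $Y_{A,B}^\stair$ are in canonical bijection with the ergodic $\R^2$-invariant probability measures on $Q$; this reduces the theorem to a statement purely about $Q$ and $Z$.

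Next I would apply Proposition~\ref{prop:inv}: the maps $\Psi_1$ and $\Psi_2$ are mutually inverse, $\Psi_1$ sends $\R^2$-invariant (finite) measures on $Q$ to finite $\Z^2$-invariant measures on $Z$ and preserves ergodicity, while $\Psi_2$ runs the other way. So the ergodic $\R^2$-invariant \emph{probability} measures on $Q$ correspond bijectively (up to normalization, which one fixes by rescaling each measure to total mass one) to the ergodic $\Z^2$-invariant measures $\lambda$ on $Z$ for which $\Psi_2(\lambda)$ is finite. Then Lemma~\ref{lem:Z2inv} identifies the ergodic $\Z^2$-invariant measures on $Z$ as exactly the products $\lambda_1\otimes\lambda_2$ of ergodic shift-invariant measures on $(\N\times\N)^\Z$, and Lemma~\ref{lem:finiteness} pins down the finiteness condition on $\Psi_2(\lambda)$ as precisely the $\lambda_1$-integrability of $a_0,d_0$ and the $\lambda_2$-integrability of $b_0,c_0$, i.e. integrability of the coordinate functions. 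Concatenating these three correspondences yields the claimed bijection.

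The one subtlety worth spelling out, and the place where a little care is needed, is the normalization bookkeeping: $\Psi_1$ and $\Psi_2$ are set up as a bijection between $\R^2$-invariant and $\Z^2$-invariant \emph{measures}, not between probability measures, since $\Psi_2$ of a probability measure on $Z$ has total mass $\int\text{Area}(\tau_{0,0}(z))\,d\lambda(z)$ rather than $1$, and conversely. I would therefore phrase the final bijection as: given an ergodic $\R^2$-invariant probability measure $\mu$ on $Y_{A,B}^\stair$, form $\nu=\Phi^*\mu$ and then $\lambda=\Psi_1(\nu)$, which by Lemma~\ref{lem:Z2inv} decomposes as $\lambda_1\otimes\lambda_2$ with the $\lambda_i$ ergodic, and automatically satisfies the integrability conditions because $\Psi_1(\nu)$ is finite; conversely, given such a product $\lambda_1\otimes\lambda_2$ with integrable coordinate functions, $\Psi_2(\lambda_1\otimes\lambda_2)$ is a finite ergodic $\R^2$-invariant measure on $Q$ by Proposition~\ref{prop:inv} and Lemma~\ref{lem:finiteness}, which one normalizes and pulls back through $\Phi^{-1}$ to an ergodic $\R^2$-invariant probability measure on $Y_{A,B}^\stair$. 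The two constructions are inverse to each other up to the scalar $\int\text{Area}(\tau_{0,0})\,d(\lambda_1\otimes\lambda_2)$, so the map $\mu\mapsto(\lambda_1,\lambda_2)$ descends to a genuine bijection on the relevant sets of measures. I do not expect any serious obstacle here; the proof is a short assembly argument, and the only thing to be careful about is stating the correspondence at the correct level (ergodic probability measures versus ergodic measures) and not double-counting the normalization.
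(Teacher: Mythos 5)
Your proposal is correct and follows exactly the paper's route: the theorem is stated there as an immediate combination of Proposition~\ref{prop:inv}, Lemma~\ref{lem:Z2inv}, and Lemma~\ref{lem:finiteness} (with Lemma~\ref{lem:bijection} implicit), and your normalization bookkeeping is the right way to make that one-line assembly precise.
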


\subsection{Entropy of invariant measures}
\label{subsec:entropy}

We maintain the notation of Section~\ref{subsec:measures-on-staircases}, and let $H(\mathcal P)$ denote the entropy of a partition $\mathcal P$ and $h_S(\lambda)$ denote the 
measure theoretic entropy of $S$ relative to the measure $\lambda$.  
Recall that $Z=(\N\times\N)^\Z\times (\N\times\N)^\Z$
and that $\tau_{0,0}$ is defined as in~\eqref{def:tau00}.

\begin{lem}\label{lem:finiteentropy}
	Let $\lambda=\lambda_1\otimes \lambda_2$ be an ergodic invariant measure on $Z$ with
	the property that $\int \text{Area}(\tau_{0,0}(z))\,d\lambda(z)<\infty$. 
	Then $h_{S_i}(\lambda_i)<\infty$ for $i=1,2$.
\end{lem}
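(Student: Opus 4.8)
The plan is to deduce finiteness of $h_{S_i}(\lambda_i)$ from the integrability hypothesis by exhibiting, for each $i$, a countable generating partition of $(\N\times\N)^\Z$ under the shift that has finite entropy. Recall that $\lambda_1$ lives on $W_1=(\N\times\N)^\Z$ and $\lambda_2$ on $W_2=(\N\times\N)^\Z$, and that by Lemma~\ref{lem:finiteness} the hypothesis $\int\text{Area}(\tau_{0,0}(z))\,d\lambda(z)<\infty$ is equivalent to $a_0,d_0\in L^1(\lambda_1)$ and $b_0,c_0\in L^1(\lambda_2)$. First I would fix $i=1$ (the case $i=2$ being identical after swapping the roles of the pairs $(a,d)$ and $(b,c)$), and let $\mathcal P$ be the time-zero partition of $W_1$ according to the value of the $0$th coordinate $(a_0,d_0)\in\N\times\N$. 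This partition is generating for the shift $\sigma$ on $W_1$, so by the Kolmogorov--Sinai theorem $h_{S_1}(\lambda_1)=h_\sigma(\lambda_1)\le H(\mathcal P)$, and it suffices to show $H(\mathcal P)<\infty$.

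The key step is therefore to bound $H(\mathcal P)$. The partition $\mathcal P$ is indexed by pairs $(m,n)\in\N\times\N$ with probabilities $p_{m,n}=\lambda_1(\{a_0=m,d_0=n\})$, and its entropy is $H(\mathcal P)=-\sum_{m,n}p_{m,n}\log p_{m,n}$. The standard tool here is the elementary inequality bounding the entropy of a distribution on the positive integers by (a constant plus) a multiple of its mean: if $X$ is a $\N$-valued random variable with $\E[X]<\infty$, then $H(X)\le \E[X]\log\E[X]+ (\text{const})$, or more simply one uses that among all distributions on $\N$ with a given mean $\mu$ the geometric distribution maximizes entropy, giving $H(X)\le (\mu+1)\log(\mu+1)-\mu\log\mu<\infty$. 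Applying this to the $\N^2$-valued variable $(a_0,d_0)$ — for instance via $H(a_0,d_0)\le H(a_0)+H(d_0)$ and then the one-dimensional bound to each factor — and using $\E_{\lambda_1}[a_0]<\infty$, $\E_{\lambda_1}[d_0]<\infty$ from Lemma~\ref{lem:finiteness}, we conclude $H(\mathcal P)<\infty$, hence $h_{S_1}(\lambda_1)<\infty$. The same argument with $(b_0,c_0)$ and $\lambda_2$ gives $h_{S_2}(\lambda_2)<\infty$.

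The only real obstacle is the quantitative entropy bound for integer-valued random variables, and this is entirely standard: it follows from the Gibbs variational principle (or a direct Lagrange-multiplier/convexity computation) that a distribution on $\N$ with finite mean has finite entropy, with an explicit bound in terms of the mean. Everything else — that $\mathcal P$ generates, that subadditivity of entropy under joins applies, that $h_{S_i}$ on the product system $Z$ restricted to the $i$th coordinate coincides with $h_\sigma(\lambda_i)$ — is immediate from the product structure of $\lambda=\lambda_1\otimes\lambda_2$ established in Lemma~\ref{lem:Z2inv} and from Kolmogorov--Sinai. I would present the integer-entropy bound as a one-line lemma or inline remark rather than belabor the computation.
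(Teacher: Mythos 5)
Your proposal is correct and follows essentially the same route as the paper: reduce to the entropy of the time-zero partition by $(a_0,d_0)$ (resp.\ $(b_0,c_0)$), use subadditivity to separate the two coordinates, and invoke Lemma~\ref{lem:finiteness} to get finite means, concluding via the elementary fact that an $\N$-valued random variable with finite mean has finite Shannon entropy. The only cosmetic difference is that the paper verifies this last fact by splitting the sum $-\sum_n \lambda_1(C_n)\log\lambda_1(C_n)$ according to whether $\lambda_1(C_n)>e^{-n}$, whereas you cite the maximum-entropy geometric distribution; both are standard and equivalent in substance.
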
	
	
\begin{proof}
We prove that $h_{S_1}(\lambda_1)<\infty$, the other estimate being identical.
Consider the countable partitions of $W_1$, $\mathcal P_1$ and $\mathcal P_2$ according to
the values taken by $a_0$ and $d_0$. Clearly $\mathcal P_1\vee \mathcal P_2$ is a generating 
partition for $S_1$. We have $h_{S_1}(\lambda_1)\le h_{S_1}(\lambda_1,\mathcal P_1)+
h_{S_1}(\lambda_1,\mathcal P_2)$ and so it suffices to show these terms are finite. We 
consider $h_{S_1}(\lambda_1,\mathcal P_1)$, the other case being similar.

Write $C_n=a_0^{-1}\{n\}$. By Lemma~\ref{lem:finiteness}, we have that $\sum_n n\lambda_1(C_n)<\infty$.
Then 
\begin{align*}
	&h_{S_1}(\lambda_1,\mathcal P_1)\\
	&\le H_{\lambda_1}(\mathcal P_1)
	=-\sum_{n\in\N}\lambda_1(C_n)\log\lambda_1(C_n)\\
	&= -\sum_{\lambda_1(C_n)>e^{-n}}\lambda_1(C_n)\log\lambda_1(C_n)
	-\sum_{\lambda_1(C_n)\le e^{-n}}\lambda_1(C_n)\log\lambda_1(C_n)\\
	&\le \sum_{\lambda_1(C_n)>e^{-n}}n\lambda_1(C_n)+\sum_{ \lambda_1(C_n)\le e^{-n}}ne^{-n}\\
	&\le \sum_{n\in\N}n\lambda_1(C_n)+\sum_{n\in\N}ne^{-n}<\infty,
\end{align*}
where we used for the fourth line the fact that $-x\log x$ is an increasing function on the interval $[0,1/e]$.
\end{proof}

The following is Theorem~\ref{thm:zero} of the introduction stated in the notation of this section.
\begin{thm}\label{thm:zeroentropy}
If $A$ and $B$ are incommensurable rectangular basic tiles, then the topological entropy of the $\R^2$ action on $Y_{A,B}$ is zero.
\end{thm}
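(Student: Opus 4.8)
The plan is to reduce the computation of topological entropy of the $\R^2$-action on $Y_{A,B}$ to two pieces: the tilings with infinite shears, and the staircase tilings $Y_{A,B}^\stair$, and to show each contributes zero. By Lemmas~\ref{lem:nosemiinf}--\ref{lem:nobiinf} and Proposition~\ref{prop:ptswshears}, the only tilings with infinite shears that can carry mass are, up to the shear structure, built from full rows of $A$'s and full rows of $B$'s; these form a very thin (essentially one-dimensional) family, so their contribution to entropy is easily seen to be zero. The bulk of the work is the staircase part. Since $Y_{A,B}$ is the union of these two closed invariant pieces, and topological entropy of a system is the supremum of $h_S(\mu)$ over invariant Borel probability measures $\mu$ by the variational principle, it suffices to show $h_S(\mu)=0$ for every ergodic invariant probability measure $\mu$ on $Y_{A,B}^\stair$.

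First I would fix such a $\mu$ and pass through the machinery of Section~\ref{subsec:measures-on-staircases}: by Theorem~\ref{thm:staircasemeasurestructure}, $\mu$ corresponds to $\nu = \Phi^*\mu$ on $Q$, which in turn corresponds via $\Psi_1$ to an ergodic $\Z^2$-invariant measure $\lambda = \lambda_1\otimes\lambda_2$ on $Z$ with integrable coordinate functions. The $\R^2$-action on $Y_{A,B}^\stair$ is, through $\Phi$, a suspension of the $\Z^2$-action $(S_1,S_2)$ on $(Z,\lambda)$ with roof function $\mathrm{Area}(\tau_{0,0}(z))$ (the conditional measure on each $\R^2$-fiber being normalized Lebesgue measure on $\tau_{0,0}$). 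The key point is that the entropy of a suspension of a $\Z^2$ (more generally amenable) action over a bounded-and-bounded-below roof is, up to a normalizing constant determined by $\int \mathrm{Area}\,d\lambda$, equal to the entropy of the base $\Z^2$-system; and a $\Z^2$-suspension flow over a $\Z^2$-system has the same entropy (up to this constant) as the underlying $\Z^2$-system. So $h_S(\mu) = c\cdot h_{(S_1,S_2)}(\lambda)$ for a finite constant $c$, and it remains to show $h_{(S_1,S_2)}(\lambda)=0$.

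This is where the product structure does the work. Since $\lambda=\lambda_1\otimes\lambda_2$ and $S_1$ acts only on the first factor while $S_2$ acts only on the second, the generator $S_1S_2$ (or any single element of the $\Z^2$) restricted to $\lambda$ decomposes: for instance, $S_1$ acts on $(Z,\lambda)$ as $\sigma\times\mathrm{id}$, which has zero entropy with respect to $\lambda$ because the second coordinate is completely non-moving under $S_1$ — more precisely $h_{S_1}(\lambda) = h_\sigma(\lambda_1)$, which is finite by Lemma~\ref{lem:finiteentropy}. The crucial observation is that the entropy of the full $\Z^2$-action is governed not by $h_{S_1}$ or $h_{S_2}$ alone but by the ``diagonal'' behavior: for a product $\Z^2$-system $(\sigma\times\mathrm{id})\cdot(\mathrm{id}\times\sigma)$, averaging over the Følner sequence of $[0,N)^2$ boxes, the number of $(\mathcal P_1\vee\mathcal P_2)^{[0,N)^2}$-names grows like $e^{o(N^2)}$: along the box, the first-coordinate data is determined by an interval of length $N$ (giving $\le e^{N\cdot h_\sigma(\lambda_1) + o(N)}$ possibilities, hence $e^{O(N)}$) and likewise the second-coordinate data is determined by $e^{O(N)}$ possibilities, so the joint count is $e^{O(N)} = e^{o(N^2)}$, forcing the $\Z^2$-entropy rate to be zero. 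I would phrase this cleanly via the formula $h_{(S_1,S_2)}(\lambda) = \lim_{N} \tfrac1{N^2} H_\lambda\big(\bigvee_{0\le i,j<N} S_1^{-i}S_2^{-j}\mathcal Q\big)$ for a finite-entropy generating partition $\mathcal Q = \mathcal P_1\vee\mathcal P_2$ of $Z$, and bound that joint partition's entropy by $H_{\lambda_1}\big(\bigvee_{i<N}\sigma^{-i}\mathcal P_1^{(1)}\big) + H_{\lambda_2}\big(\bigvee_{j<N}\sigma^{-j}\mathcal P_2^{(2)}\big) = O(N)$ using subadditivity and the finiteness from Lemma~\ref{lem:finiteentropy}.

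The main obstacle, and the step requiring the most care, is making rigorous the claim that the topological entropy of the $\R^2$ tiling system equals the supremum of measure entropies and that this reduces to the $\Z^2$-base entropy via the suspension. The variational principle for $\R^d$-actions on compact metric spaces is standard, but one must check $Y_{A,B}$ is compact metrizable in the local (Chabauty–Fell / tiling) topology and that the decomposition into the infinite-shear locus and $Y_{A,B}^\stair$ covers all invariant measures — this follows from Lemmas~\ref{lem:nosemiinf} and~\ref{lem:nobiinf} applied to ergodic components. The suspension entropy formula (Abramov-type, for $\Z^d$-suspensions, with the roof $\mathrm{Area}(\tau_{0,0})$ bounded below by $\min(w_Ah_A, w_Bh_B)>0$ and integrable, hence $L^1$ — boundedness above is not actually needed, only $L^1$) should be cited or proved in an appendix; the delicate point is that the roof is unbounded, so one wants the measurable/integrable version. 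I expect the product-entropy-is-zero step itself to be short once the reduction is in place.
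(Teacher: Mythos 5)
Your overall strategy matches the paper's: invoke the variational principle, split the ergodic measures into those carrying bi-infinite shears and those supported on $Y_{A,B}^\stair$, and in the staircase case exploit the product structure $\lambda=\lambda_1\otimes\lambda_2$ together with the finiteness of $h_{\sigma}(\lambda_i)$ from Lemma~\ref{lem:finiteentropy} to conclude that the information content of an $N\times N$ window is only $O(N)$, hence negligible against $N^2$. Where you diverge is the final step. The paper never invokes an Abramov-type formula: it fixes the explicit refining partitions $\mathcal P^{(n)}$ of the tiling space (recording tile corners relative to a $2^{-n}$ grid), shows $h(\mu)=\lim_n h(\mu,\mathcal P^{(n)})$, and then directly counts the $\mathcal P^{(n)}_N$-names covering a set of measure $1-4\epsilon$ --- a cover by $e^{(h_1+\epsilon)N}e^{(h_2+\epsilon)2KN}(2^nDN^2)^2=O(e^{AN})$ pieces, using Shannon--McMillan covers of $W_1$ and $W_2$ and the fact that $O(N)$ staircases determine the patch over $[0,N)^2$. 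This hands-on count is, in effect, a self-contained proof of exactly the inequality (suspension entropy controlled by base entropy) that you propose to import as a black box. That is the one real risk in your route: an Abramov formula for $\R^2$-suspensions of $\Z^2$-actions with variable, unbounded (merely integrable) cell shapes is not an off-the-shelf citation the way the one-dimensional Abramov theorem is, and you correctly flag that it would have to be proved; if you prove it, you will find yourself doing essentially the paper's counting argument. Two smaller points: your dismissal of the infinite-shear case as an ``essentially one-dimensional family'' is right in spirit but should be quantified --- the paper's count there is $2^N\cdot(2^n\max(h_A,h_B)N)\cdot(2^n\max(w_A,w_B)N)^N$, which is superexponential in $N$ yet still $o(e^{aN^2})$, so the conclusion needs that explicit bound rather than a thinness heuristic; and $Y_{A,B}^\stair$ is not closed in $Y_{A,B}$, so the decomposition must be phrased measure-theoretically (as you do in your last paragraph) rather than as a union of closed invariant sets.
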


\begin{proof}
Using the variational principle, it suffices to show that all of the ergodic invariant measures on $Y_{A,B}$
have zero measure-theoretic entropy. This entropy is, by definition, the measure-theoretic 
entropy of the $\Z^2$-subaction $(S^{\vec v})_{\vec v\in\Z^2}$ on $Y$.
We make the simplifying assumption that all of $w_A$, $h_A$, $w_B$ and $h_B$ are 
greater than 1, so that no unit square can contain the bottom left corner of two tiles. (If this were not satisfied,
we would instead compute the entropy with respect to a finer subaction $(\Z/N)^2$ for an appropriate $N$.)

We define a refining sequence of partitions on $Y_{A,B}$, namely 
$$
\mathcal P^{(n)}=\{A^{(n)}_{i,j}\colon 
0\le i,j<2^n,B^{(n)}_{i,j}\colon 0\le i,j<2^n,C^{(n)}\}, 
$$ 
where $A^{(n)}_{i,j}$ is the set of tilings such that the region
$[i/2^n,(i+1)/2^n)\times [j/2^n,(j+1)/2^n)$ contains the lower left corner of an $A$ tile, 
$B^{(n)}_{i,j}$ is the set of tilings such that the region
$[i/2^n,(i+1)/2^n)\times [j/2^n,(j+1)/2^n)$ contains the lower left corner of a $B$ tile, and $C^{(n)}$ is the event that $[0,1)^2$
does not contain the lower left corner of any tile.

Notice that while $\mathcal P^{(n)}$ is not a generating partition with respect to the $\Z^2$ action, it is the case that 
for any two distinct points $y,y'\in Y_{A,B}$, there exists $\vec  m\in\Z^2$ and $n\in\N$ 
such that $S^{\vec m}y$ and $S^{\vec m}y'$ lie in different elements of $\mathcal P^{(n)}$.
Thus $\bigvee_{n\ge 0}\mathcal P^{(n)}$ agrees up to set of measure $0$ with the Borel $\sigma$-algebra. By standard facts in entropy theory 
(see~\cite{walters} or~\cite{conze}), we obtain 
that $h(\mu)=\lim_{n\to\infty}h(\mu,\mathcal P^{(n)})$ for an $\R^2$-invariant measure $\mu$. 
We therefore show that for any $n\in\N$ and
any $\R^2$-invariant measure $\mu$ on $Y$, $h(\mu,\mathcal P^{(n)})=0$.

We first deal with the case where $\mu$ has infinite shears, restricting to the case that 
$\mu$-almost 
every point has infinite horizontal shears, the case with vertical shears being similar.
By Proposition~\ref{prop:ptswshears}, we have a description of the points on which the measure is supported.

Let $N>0$ be chosen. We compute the
number of non-empty elements in the partition $\mathcal P^{(n)}_{N}=
\bigvee_{0\le i,j<N}T^{-(i,j)}\mathcal P^{(n)}$.

Notice that the element of $\mathcal P^{(n)}_N$ in which a point lies is completely determined by the following
information:
\begin{enumerate}
\item the row types ($A$ or $B$) of the first $N$ rows above the origin;
\item the $y$-coordinates relative to a $2^{-n}$ grid of the bottoms of the rows with $y$ coordinates in the 
range $[0,N)$;
\item the $x$-coordinates relative to a $2^{-n}$ grid of the left edges of all the tiles with lower left corners 
in $[0,N)^2$.
\end{enumerate}

For any point $x\in Y$, each row consists either entirely of $A$ tiles or entirely of $B$ tiles. 
Considering the first $N$ rows whose bottoms are above the origin, we obtain a sequence of $N$
$A$'s and $B$'s corresponding to the order in which they occur. There are  $2^N$ such possibilities.

Given such a sequence, we first address possible positions of the bottoms of the rows relative to the $2^{-n}$
vertical grid implicit in the partition. Notice that there are at most $N$ rows of tiles that could affect the element of $
P^{(n)}_{N}$ in which a point lies. Let $y_0$ be the $y$-coordinate of the lowest row of tiles whose bottom edge
lies above the $x$-axis, so that $0\le y_0<\max(h_A,h_B)$. 
Letting $y_0$ vary, we have that a row bottom crosses the grid at most 
$2^n\max(h_A, h_B)$ times and this can happen in each of the first $N$ rows 
of tiles of the grid.  This leads to a total of 
$2^n\max(h_A, h_B)N$  as an upper bound on the number of possible configurations.  
 

Finally, by a similar argument, in each row whose bottom has a $y$-coordinate in the range $[0,N)$,
the number of possible configurations where the left edges of the tiles lie relative to a $2^{-n}$ grid 
is at most $2^n\max(w_A,w_B)N$.

Multiplying these quantities, we see that
\begin{equation*}\mathcal N(\mathcal P^{(n)}_N)\le2^N2^n\max(h_A,h_B)N(2^n\max(w_A,w_B)N)^N,
\end{equation*}
where $\mathcal N(\mathcal Q)$ denotes the number of non-empty elements of the partition $\mathcal Q$.
In particular, (for fixed $n$) the number of non-empty partition elements
grows slower than $e^{aN^2}$ for any $a>0$.  Thus 
$h(\mu,\mathcal P^{(n)})=0$ for any $n$ and hence
$h(\mu)=0$.

On the other hand, if the ergodic measure $\mu$ is supported on staircase tilings, let $\lambda_1\otimes\lambda_2$ be the corresponding
measure on $W_1\times W_2$. By Theorem \ref{thm:staircasemeasurestructure}, the measures $\lambda_1$ and
$\lambda_2$ are ergodic. By Lemma \ref{lem:finiteentropy}, both of these measures have finite entropy.
Let $\epsilon>0$ be given. There exists $D>0$ such that with probability at least $1-\epsilon$, the $(0,0)$ basic unit
has both dimensions less than $D$. Let $E_1$ be the set where this occurs. 

Let $(w,w')$ denote a point in $W_1\times W_2$ (recall that each $W_i = (\N\times\N)^\Z$). 
Define
$$
\vec u_k=((a_0(w)+a_1(w)+\ldots+a_{k-1}(w))w_A,(d_0(w)+\ldots+d_{k-1}(w))h_B). 
$$
This is the relative position of the $(k,0)$ basic tile to the $(0,0)$ basic tile.
There exist $K>1$, $n_0> 0$, and a subset $E_2$ of $W_1$ of measure at least $1-\epsilon$ such that 
$\|\vec  u_N\| \le KN$ for all $N\ge n_0$. Notice also (by the assumption that $w_A>1$ and $h_B>1$)
that $(\vec  u_N)_i> N$ for $i=1,2$. This defines a segment of the $0$th northeast staircase.
Since the successive northeast staircases lie at least 1 unit above and below their neighbors, we 
deduce that the northeast staircases between the $(-KN)$th and the $(KN)$th cover $[0,N)^2$.
This ensures that $w_0^{N-1}$ and ${w'}_{-KN}^{KN-1}$ determine a patch
of the tiling that includes $[0,N)^2$.

Given this, there are at most $N^2$ tiles in the region, whose boundaries lie on at most $N^2$ $y$-coordinates
and at most $N^2$ $x$-coordinates. The above argument shows that as one moves the origin around 
$\tau_{0,0}(w,w')$, the horizontal boundaries cross the $2^{-n}$ gridlines at most $2^nHN^2$ times and similarly
the vertical boundaries cross the $2^{-n}$ gridlines at most $2^nWN^2$ times, where $W$ and $H$ are the width 
and height of the $(0,0)$ basic unit. 

Let $h_1$ be the entropy of $\lambda_1$ and $h_2$ be the entropy of $\lambda_2$. Then for sufficiently large
$N$, $W_1$ may be covered up to a set of measure $\epsilon$ by $e^{(h_1+\epsilon)N}$ length $N$ cylinder sets
and $W_2$ may be covered up to a set of measure $\epsilon$ by $e^{(h_2+\epsilon)2KN}$ length $2KN$
cylinder sets. Let $E_3$ be the union of the cylinder sets in $W_1$ and $E_4$ be the union of the cylinder sets in
$W_2$.

Hence $E_1\cap (E_2\times W_2)\cap (E_3\times W_2)\cap (W_1\times E_4)$, a set of measure at least $1-4\epsilon$,
is partitioned into at most $e^{(h_1+\epsilon)N}e^{(h_2+\epsilon)(2KN)}(2^nDN^2)^2$ pieces by $\mathcal P^{(n)}_N$.
Notice that this is $O(e^{AN})$ for a suitable $A$, for all sufficiently large $N$.
Hence by standard entropy estimates, we obtain for large $N$
\begin{equation*}
H(\mathcal P^{(n)}_N)\le -(4\epsilon)\log(4\epsilon) + (1-4\epsilon)\log(1-4\epsilon)
+ (4\epsilon)N^2\log|\mathcal P^{(n)}| + AN.
\end{equation*}

Dividing by $N^2$ and taking the limit, we see $h(\mu,\mathcal P^{(n)})\le 4\epsilon\log|\mathcal P^{(n)}|$.
Since $\epsilon$ is arbitrary, $h(\mu,\mathcal P^{(n)})=0$ as claimed.

\end{proof}

Any $\mathbb R^2$ Bernoulli flow is ergodic in every direction and has completely positive entropy and thus Corollary~\ref{c:sharp2} follows immediately from Lemma~\ref{lem:incommens} and Theorem~\ref{thm:zeroentropy}.

\section{Some positive results for $\mathcal T$-tileability with $|\mathcal T|=2$}
\label{sec:positive-2-tiles}

\begin{lem}\label{lem:torustileable}
Let $M\in GL_2(\R)$ be a matrix such that the entries of the first column are rationally independent, as are the 
entries of the second column.
Consider the (non-free) action of $\R^2$ on $\T^2$ defined by
$S^{\vec v}x=(x+M\vec v)\bmod \Z^2$.
Then there exists a collection $\mathcal T$ of two rectangular tiles such that $((S^{\vec v})_{\vec v\in\R^2},\T^2)$
is $\mathcal T$-tileable.
\end{lem}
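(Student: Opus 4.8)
The plan is to realize $(\T^2,(S^{\vec v}))$ as a factor of a single periodic two‑rectangle tiling.  Set $L=M^{-1}\Z^2$, a lattice in $\R^2$.  Because the entries of each column of $M$ are rationally independent, $L$ contains no nonzero point on a coordinate axis: if $M^{-1}\vec n$ has vanishing $y$‑coordinate then $\vec n$ is proportional to the first column of $M$, which for $\vec n\in\Z^2\setminus\{0\}$ contradicts rational independence of that column, and the $x$‑axis case is analogous.  Next, the $S$‑stabilizer of every point of $\T^2$ is $\{\vec v:M\vec v\in\Z^2\}=L$, so $x\mapsto M^{-1}x$ is an $\R^2$‑equivariant isomorphism from $(\T^2,(S^{\vec v}))$ onto $\R^2/L$ with the translation action.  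Hence it suffices to produce a pair $\mathcal T=\{A,B\}$ of rectangles together with a tiling $y_0\in Y_{\mathcal T}$ that is $L$‑periodic, i.e.\ $S^{\vec\ell}y_0=y_0$ for all $\vec\ell\in L$: then $\bar\phi(x)=S^{M^{-1}x}y_0$ is well defined on $\T^2$ by $L$‑periodicity, continuous, and equivariant, and pushing Lebesgue measure forward along $\bar\phi$ shows $(\T^2,(S^{\vec v}))$ is $\mathcal T$‑tileable.

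To build such a $y_0$ I will use the staircase tilings of Section~\ref{sec:2-tilings}.  I claim $L$ has a basis $\{\vec g_1,\vec g_2\}$ with $\vec g_1$ in the open first quadrant and $\vec g_2$ in the open second quadrant; granting this, write $\vec g_1=(w_A,h_B)$ and $\vec g_2=(-w_B,h_A)$ with $w_A,h_A,w_B,h_B>0$.  Since $L$ has no axis points, for \emph{any} basis the first coordinates are rationally independent and likewise the second coordinates, so the rectangles $A$ of dimensions $w_A\times h_A$ and $B$ of dimensions $w_B\times h_B$ are incommensurable.  The region $\tau_{0,0}$ of~\eqref{def:tau00} for the constant sequences $a_i\equiv b_j\equiv c_j\equiv d_i\equiv 1$ is the basic unit consisting of an $A$ tile with a $B$ tile abutting its right side along aligned bottoms, and it has area $w_Ah_A+w_Bh_B=\det(\vec g_1,\vec g_2)$.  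By Lemma~\ref{lem:bijection} there is a staircase tiling $y_0\in Y_{A,B}^\stair$ mapping to the point of $Q$ with these constant sequences and the origin at a corner of $\tau_{0,0}$; all of its basic units are translates of $\tau_{0,0}$, and by Proposition~\ref{prop:latticestruct} and Lemma~\ref{lem:staircase} its basic units are indexed by $\Z^2$ so that passing to the northeast neighbor translates by $\vec g_1$ and passing to the northwest neighbor translates by $\vec g_2$.  As all basic units are congruent, $y_0$ is invariant under $\Z\vec g_1+\Z\vec g_2=L$, as needed.

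It remains to prove the claim about the basis, which is the crux.  Let $\vec m$ be a shortest nonzero vector of $L$; after replacing $\vec m$ by $-\vec m$ we may assume its second coordinate is positive, so $\vec m$ lies in the open first or second quadrant.  Suppose $\vec m$ is in the open first quadrant, at angle $\alpha\in(0,\pi/2)$, and put $\vec g_1=\vec m$.  The vectors completing $\vec g_1$ to a basis of $L$ form two lines parallel to $\R\vec g_1$, at distance $\det(L)/|\vec g_1|$ on either side, with consecutive basis‑completing lattice points spaced $|\vec g_1|$ apart along each line; the line on the counterclockwise side of $\vec g_1$ is offset toward the open second quadrant.  A direct computation shows this line meets the open second quadrant, between where it crosses the positive $y$‑axis and where it crosses the negative $x$‑axis, in a segment of length $2\det(L)/(|\vec g_1|\sin 2\alpha)\ge 2\det(L)/|\vec g_1|$.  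Since $\vec m$ is a shortest vector, $|\vec g_1|^2=|\vec m|^2\le\frac{2}{\sqrt{3}}\det(L)<2\det(L)$ by Minkowski's convex body theorem (or Gauss reduction of the lattice), so $|\vec g_1|<2\det(L)/|\vec g_1|$ and the segment is strictly longer than the spacing; it therefore contains a basis‑completing lattice point, which, as $L$ has no axis points, lies in the open second quadrant.  If instead $\vec m$ lies in the open second quadrant, set $\vec g_2=\vec m$ and run the mirror‑image argument to find $\vec g_1$ in the open first quadrant completing the basis.  This proves the claim.

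The main obstacle is this last step: extracting from the ``no axis point'' hypothesis a basis of $L$ with one generator in each of the two open quadrants adjacent to the positive $y$‑axis.  The two facts that make it work are the bound on the length of the shortest lattice vector and the elementary observation that the strip of basis‑completing vectors for that shortest vector is simultaneously close enough to the origin, and finely enough spaced, to be forced into the desired quadrant.
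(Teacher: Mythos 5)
Your proposal is correct and follows essentially the same route as the paper: pass to the lattice $M^{-1}\Z^2$, find a basis with one generator in each of two adjacent open quadrants, use those generators to fix the dimensions of the two rectangles and the period vectors of a periodic staircase tiling $y_0$, and define the factor map $x\mapsto S^{M^{-1}x}y_0$. The only divergence is in how the quadrant basis is produced --- the paper takes the primitive vectors in a vertical strip lying closest to the horizontal axis from above and below, whereas you use the shortest lattice vector together with Minkowski's bound and a spacing count along the line of basis-completing vectors --- and your version supplies more detail than the paper's one-sentence justification of that step.
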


The condition on the columns of $M$ ensures that the one dimensional actions $(S^{(t,0)})_{t\in\R}$ and
$(S^{(0,t)})_{t\in\R}$ are free.

\begin{proof}
Let $L=M^{-1}$. From the formula for the inverse of a $2\times 2$ matrix, we see the entries of
first row of $L$ are independent, as are the entries of the second row. 
We claim that $L\mathbb Z^2$ has a pair of generators lying in the first and fourth quadrants. To see this, consider the primitive lattice vectors in $L\mathbb Z^2$ lying in a vertical strip $[0,t)\times \R$ and take the two vectors lying closest to the horizontal axis from above and below.

Let these generators be $(a,b)$ and $(c,-d)$. Then let $\mathcal T$ consist the rectangle 
$A$ with $w_A = a$ and $h_A = d$ and the rectangle $B$ with $w_B = c$ and 
$h_B = d$.  
Forming a basic unit by putting the $A$ tile next to the $B$ tile on top of a common
horizontal segment, one can check (as in Section~\ref{subsec:lattice}) that the basic units can be arranged 
to tile $\R^2$ periodically (the displacement vector $(a,b)$ being the translation between consecutive
northwest staircases and the displacement vector $(c,-d)$ being the translation between consecutive
northeast staircases). Let $x\in Y_{A,B}$ be the resulting periodic tiling of the plane.
This is illustrated in Figure \ref{fig:period}. Notice that $T^{\vec u}x=T^{\vec v}x$
if and only if $u-v\in L\Z^2$.

To finish the proof, let $\Phi\colon \T^2\to Y_{A,B}$ be defined by $u\mapsto S^{L\vec u}x$.
The above observation ensures that $\Phi$ is well-defined and it is straightforward to see that 
$\Phi\circ S_{\T^2}^{\vec u}=S_Y^{\vec u}\circ\Phi$, where $S_{\T^2}$ denotes the action on the torus
$S_{\T^2}^{\vec u}(z)=z+M\vec u\bmod \Z^2$ and $S_Y$ denotes the translation action on $Y$.
\end{proof}

\begin{figure}
\includegraphics{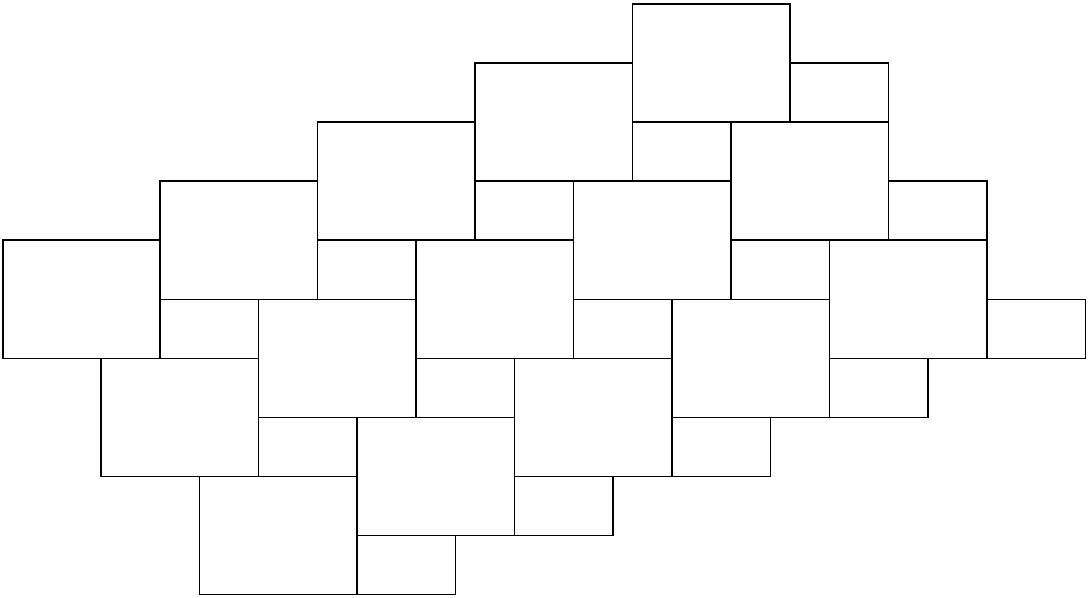}
\caption{A periodic tiling by basic units}\label{fig:period}
\end{figure}

If the matrix $M$ is invertible, but has the property that there is rational dependence
between the entries of one of the columns,
then it turns out that the action is $\mathcal T$-tileable for a set $\mathcal T$ consisting of a single
suitably chosen basic tile. Suppose the first column of $M$ is rationally dependent. Then the second row of $L$ is 
rationally dependent. There exists a matrix $H$ in $SL_2(\Z)$ such that $(LH)_{21}=0$. Note that $L\Z^2=LH\Z^2$.
Let $LH=\begin{pmatrix}a&b\\0&c\end{pmatrix}$, so that the lattice has generators $(a,0)$ and $(b,c)$. We then take 
$\mathcal T$ to consist of an $a\times c$ rectangle, $A$, and take $x$ to be the configuration of rows of $A$'s, each row
shifted rightwards from the row below by $b$. The remainder of the previous argument works as before.

\begin{prop}
Let $(S^{\vec v})_{v\in\R^2}$ be an ergodic measure-preserving action on a space $X$.
Suppose further that there exist linearly independent vectors $\mathbf h_j$ and eigenfunctions
$f_j$ for $j=1,2$ satisfying $f_j(S^{\vec v}x)=\exp(2\pi i\mathbf h_j\cdot \vec v)f_j(x)$.
Then there exists a set $\mathcal T$ consisting of a pair of rectangular tiles such that $(S^{\vec v})
\colon X\to X$ is $\mathcal T$-tileable.
\end{prop}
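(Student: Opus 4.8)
The plan is to use the two eigenfunctions to produce a factor of $(X,\mu,S)$ isomorphic to a translation action of $\R^2$ on $\T^2$, and then to tile that torus action --- either by a direct appeal to Lemma~\ref{lem:torustileable} or by the degenerate-case construction described immediately after it.

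First I would normalize the eigenfunctions. Since $|f_j|$ is invariant under the $\R^2$ action and the action is ergodic, $|f_j|$ is almost surely a constant, and this constant is nonzero because $f_j$ is an eigenfunction; replacing $f_j$ by $f_j/|f_j|$, we may assume $|f_j|\equiv 1$. Writing $f_j=e^{2\pi i g_j}$ for a measurable $g_j\colon X\to\R/\Z$, the map $\pi(x)=(g_1(x),g_2(x))\in\T^2$ satisfies $\pi(S^{\vec v}x)=\pi(x)+H\vec v\pmod{\Z^2}$, where $H$ is the $2\times 2$ matrix whose rows are $\mathbf h_1$ and $\mathbf h_2$. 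As $\mathbf h_1,\mathbf h_2$ are linearly independent, $H\in GL_2(\R)$, so $\pi$ is a (continuous) factor map from $(X,\mu,S)$ onto the $\R^2$ action $z\mapsto z+H\vec v$ on $\T^2$ equipped with $\pi_*\mu$. Since a factor of a factor is a factor, it suffices to show that this torus action is $\CT$-tileable for some collection $\CT$ of two rectangles.

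If the entries within each column of $H$ are rationally independent, this is precisely Lemma~\ref{lem:torustileable} applied with $M=H$. Otherwise some column of $H$ is rationally dependent, equivalently some row of $L:=H^{-1}$ is rationally dependent, and I would proceed as in the paragraph following Lemma~\ref{lem:torustileable}: triangularizing $L$ by an element of $SL_2(\Z)$ shows that the lattice $\Lambda:=L\Z^2$ has a basis of the form $(a,0)$, $(b,c)$ with $a,c>0$. Fix $0<a_1<a$ with $a_1\neq a-a_1$, let $A$ and $B$ be rectangles of height $c$ and widths $a_1$ and $a-a_1$ respectively, and let $x\in Y_{A,B}$ be the periodic tiling whose horizontal strip $\R\times[kc,(k+1)c)$ is the period-$a$ pattern $\cdots ABAB\cdots$ translated by $kb$. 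By construction $x$ is fixed by translation by $(a,0)$ and by $(b,c)$, hence by all of $\Lambda$, so $\Phi\colon\T^2\to Y_{A,B}$ given by $\Phi(z)=S^{Lz}x$ is well defined (if $z-z'\in\Z^2$ then $Lz-Lz'\in\Lambda$ fixes $x$) and continuous, and $\Phi(z+H\vec v)=S^{Lz+\vec v}x=S^{\vec v}\Phi(z)$. Composing $\Phi$ with $\pi$ yields a factor map from $(X,\mu,S)$ onto a tiling system with two tiles.

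The one point needing care is the degenerate case: there a single rectangle already tiles the torus action, so one must produce two genuinely distinct tiles without destroying $\Lambda$-periodicity, which is the reason for splitting the period-$a$ row into an $A$--$B$ pair of unequal widths. Everything else --- normalizing the $f_j$, checking that $\pi$ and $\Phi$ are factor maps, and invoking Lemma~\ref{lem:torustileable} in the generic case --- is routine.
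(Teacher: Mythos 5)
Your proof is correct and follows essentially the same route as the paper: normalize the eigenfunctions, use them to build a factor map onto the translation action on $\T^2$ with matrix $M=H$, and compose with the tiling factor map of Lemma~\ref{lem:torustileable}. You are in fact slightly more careful than the paper's own two-line proof, which invokes Lemma~\ref{lem:torustileable} without checking its column-independence hypothesis; your explicit handling of the degenerate case (splitting the single periodic rectangle into two rectangles of unequal widths so that the tile set genuinely has two elements) patches a point the paper only addresses informally, and for one tile only, in the paragraph following that lemma.
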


\begin{proof}
By changing variables, we may regard the $f_j$'s as maps from $X$ to $\mathbb T$ (regarded
as an additive group). Letting $F(x)=(f_1(x),f_2(x))$, we have $F(S^{\vec v}x)=F(x)+M\vec v\bmod \Z^2$,
where $M$ is the matrix with rows $\mathbf h_1$ and $\mathbf h_2$. Hence $F$
is a factor map from $X$ onto the action on the torus appearing in Lemma \ref{lem:torustileable}.
Since that lemma produced a factor map onto the translation action on a tiling space with two rectangular
tile types, composing the factor maps completes the proof.
\end{proof}

\section{Existence of mixing systems that are tileable}
\label{sec:mixing}

\subsection{Statement of the result}

We show:
\begin{thm}
\label{th:mixing}
Assume that $\mathcal T$ consists of two basic tiles $A$ and $B$ whose heights and 
widths are incommensurable and further satisfying $w_A = h_B$ and $w_B=h_A$.  
Then there exists a mixing measure preserving 
system $(X, \mathcal B, \mu, (T_{\vec v})_{\vec v\in\R^2})$ that is $\mathcal T$-tileable.
\end{thm}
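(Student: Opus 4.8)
The plan is to construct the mixing system as a suspension over a $\mathbb Z^2$-action, exploiting the description of staircase tilings obtained in Section~\ref{subsec:measures-on-staircases}. By Theorem~\ref{thm:staircasemeasurestructure}, an ergodic $\R^2$-invariant probability measure on $Y_{A,B}^\stair$ corresponds to a product $\lambda_1\otimes\lambda_2$ of ergodic shift-invariant measures on $(\N\times\N)^\Z$ whose coordinate functions are integrable. The symmetry hypothesis $w_A=h_B$, $w_B=h_A$ is what makes a single choice of such a measure yield a \emph{mixing} $\R^2$-action: it forces the two one-dimensional ``clocks'' governing the horizontal and vertical suspension directions to be driven by the same roof data, so that a well-chosen $\lambda_i$ will produce a suspension that mixes in all directions rather than merely in the coordinate directions.

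Concretely, I would first pick a single ergodic measure $\lambda$ on $(\N\times\N)^\Z$ for which the shift $\sigma$ is mixing and the coordinate functions are $\lambda$-integrable (for instance, push forward a mixing measure, such as a Bernoulli or mixing Markov measure, under a bounded recoding, or simply take a Bernoulli measure on a countable alphabet with a summable, suitably decaying weight so that $\sum_n n\,\lambda(\text{value}=n)<\infty$). Set $\lambda_1=\lambda_2=\lambda$ and let $\mu=\Psi_2(\lambda_1\otimes\lambda_2)$ normalized to a probability measure; by Proposition~\ref{prop:inv} and Lemma~\ref{lem:finiteness} this is a well-defined ergodic $\R^2$-invariant probability measure on $Y_{A,B}^\stair\subset Y_{A,B}$. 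The identity map realizing $Y_{A,B}^\stair$ inside $Y_{A,B}$ is then the desired factor map, so the only thing left is to verify mixing of $(Y_{A,B},\mu,S)$. Using the bijection $\Phi$ of Lemma~\ref{lem:bijection}, the $\R^2$-action is identified with a suspension of the $\mathbb Z^2$-action $(S_1,S_2)$ on $Z$ under the roof $\tau_{0,0}(z)$: moving by $\vec v$ in tiling space advances the $\mathbb Z^2$-cocycle by the number of basic units crossed in each staircase direction, with a Lebesgue-distributed position inside the current basic unit.

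The main step — and the expected main obstacle — is proving mixing of this suspension. The argument I would run is the standard one for suspensions (a ``mixing of all orders of the cocycle'' / Rudolph-style argument): a set $E\times D$ with $D\subset\tau_{0,0}$ and $E$ a cylinder in $Z$ is pulled back under $S^{\vec v}$ to a union of sets of the form $S_1^{-i(\vec v,z)}S_2^{-j(\vec v,z)}E\times (\text{translate of }D)$, where $(i(\vec v,z),j(\vec v,z))$ are the staircase displacement counts. As $\|\vec v\|\to\infty$, these displacement counts go to infinity $\mu$-a.e. (the staircases have bounded-below step sizes by the $w_A,w_B,h_A,h_B>1$ normalization used in Section~\ref{subsec:entropy}, and the counts grow linearly by the Birkhoff-type estimate already used there), and the product structure $\lambda_1\otimes\lambda_2$ together with mixing of $\sigma$ on each factor gives decorrelation of the $Z$-part; the $\tau_{0,0}$-fiber part is handled because the conditional measure on fibers is Lebesgue and the translates of $D$ equidistribute modulo the basic-unit lattice. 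The delicate point is that the displacement vector $(i(\vec v,z),j(\vec v,z))$ depends on $z$, so one cannot simply quote mixing along a fixed sequence of times; instead one needs a uniform-in-$z$ control, which I would obtain by conditioning on the (finitely many) values of the roof data relevant to a large ball, showing that for $\vec v$ in a given direction the displacement counts concentrate (after dividing by $\|\vec v\|$) and that the residual randomness is enough to invoke mixing of the base. Handling the boundary directions — where $\vec v$ is nearly parallel to one staircase family, so one of the two counts grows much more slowly — requires the genuine two-dimensional mixing of $\sigma$-powers on the relevant factor and is where the incommensurability and the symmetry $w_A=h_B$, $w_B=h_A$ are used to rule out a rational resonance that would otherwise obstruct mixing; I expect this case analysis to be the technical heart of the proof.
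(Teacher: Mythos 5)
Your setup agrees with the paper's: both work inside $Y_{A,B}^\stair$, take $\mu=\Psi_2(\lambda_1\otimes\lambda_2)$ for a product of shift-invariant measures on $(\N\times\N)^\Z$ with integrable coordinates (the paper takes $\lambda_1=\lambda_2$ to be Bernoulli on $\{1,2\}\times\{1,2\}$ with equal weights), and view the $\R^2$-action as a suspension of the $\Z^2$-action on $Z$ via the bijection $\Phi$. But the mechanism you propose for mixing --- ``the product structure together with mixing of $\sigma$ on each factor gives decorrelation of the $Z$-part; the $\tau_{0,0}$-fiber part is handled because the conditional measure on fibers is Lebesgue and the translates of $D$ equidistribute'' --- is not a valid argument, and this is exactly where the real work lies. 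Mixing of the base does not pass to the suspension: if the roof data were constant (all basic units congruent), the base could be Bernoulli and yet the $\R^2$-action would be a periodic-tiling factor with nonconstant eigenfunctions, hence not even weakly mixing. What makes the suspension mix is not base mixing but \emph{spreading} of the displacement cocycle, and your proposal to show that ``the displacement counts concentrate'' after conditioning points in precisely the wrong direction: one needs anti-concentration. The paper's argument conditions on a $\sigma$-algebra $\mathcal F$ fixing all coordinates except a window of length $N$ on each side of the central block; the conditional law of the shift $\vec u_i(\omega x)-\vec u_i(x)$ is then a sum of $N$ i.i.d.\ bounded increments with weights $w_A$ and $w_B$, and the technical heart is a quantitative normal approximation of this law in the Prokhorov/BL metric via Zaitsev's Fourier-analytic theorem, where the characteristic function $(\cos(w_At/2)\cos(w_Bt/2))^N$ is controlled using the continued-fraction expansion of $w_A/w_B$. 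The resulting Gaussian of variance of order $N$ spreads $\vec v$ over a region containing many basic units, and the Birkhoff ergodic theorem applied to the spatial average over that region gives $\E_\nu(g\circ S_{\vec v}\mid\mathcal F)\approx\int g\,d\nu$. None of this is recoverable from the qualitative statement that $\sigma$ is mixing, so as written your proof has a genuine gap at its central step.

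Two smaller corrections. First, the hypothesis $w_A=h_B$, $w_B=h_A$ is not there to ``rule out a rational resonance that would otherwise obstruct mixing''; the paper states explicitly that it is an artifact of the proof, serving only to make the two marginal displacement distributions $\eta_1$ and $\eta_2$ identically distributed so the Fourier estimate need be carried out once, and that it could be removed with more careful bookkeeping. Second, the incommensurability of $w_A$ and $w_B$ does enter the mixing proof, but through the separation of the peaks of $|\cos(w_At/2)|$ and $|\cos(w_Bt/2)|$ in the Zaitsev estimate, not through any ``two-dimensional mixing of $\sigma$-powers'' on the base.
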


The condition $w_A = h_B$ and $w_B=h_A$ is an artifact of the proof that allows us to 
simply the computations.  With significantly more work in the estimates and careful 
choice of bounds in each direction, this condition could be removed.

We produce such a mixing example by showing that there exists a mixing measure on 
$Y_{A,B}^\stair$, and the proof of the mixing property reduces to estimating the distance between 
certain measures on $\R^2$ with respect to suitable metrics. Before turning to the proof of the theorem in Section~\ref{sec:proof-of-mixing}, we describe these metrics.

\subsection{BL and Prokhorov metrics}

\begin{definition}
\label{def:prok}
The {\em Prokhorov metric} is defined on the space of 
Borel probability measures on $\R^d$
by setting $d_\text{Prok}(\eta_1,\eta_2)$ to be 
\begin{equation*}
\inf\{\epsilon\colon \eta_1(C)\le \eta_2(B^\epsilon(C))+
\epsilon,\ \eta_2(C)\le \eta_1(B^\epsilon(C))
+\epsilon\},
\end{equation*}
where $C$ runs over the collection of closed subsets of $\R^d$.
\end{definition}

\begin{definition}
\label{def:BL}
The Bounded Lipschitz (BL) metric is defined on the space of Borel probability measures
on $\R^d$ by setting
$d_\text{BL}(\eta_1,\eta_2)=\sup\{\int f\,d\eta_1-\int f\,d\eta_2\colon\|f\|_\text{Lip}=1\}$,
where the Lipschitz norm of a function is defined to be the sum of its supremum norm
and its minimal Lipschitz constant.
\end{definition}

We use the following inequality relating the Prokhorov and BL metrics:
\begin{thm}[Dudley~\cite{dudley}, Corollary 2]
\label{lem:BLleProk}
If $\eta_1$ and $\eta_2$ are probability measures on a separable metric space, then
$d_\text{BL}(\eta_1,\eta_2) \le 2 d_\text{Prok}(\eta_1,\eta_2)$.
\end{thm}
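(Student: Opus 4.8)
The plan is to prove Dudley's inequality directly, by passing through the layer-cake (distribution-function) representation of integrals against a probability measure. Write $\delta=d_\text{Prok}(\eta_1,\eta_2)$. It suffices to show that for every $\epsilon>\delta$ and every test function $f$ with $\|f\|_\text{Lip}=1$ one has $\int f\,d\eta_1-\int f\,d\eta_2\le 2\epsilon$: taking the supremum over such $f$ and then letting $\epsilon\downarrow\delta$ gives the claim. So I would fix such an $f$, set $M=\|f\|_\infty$ and let $L$ be its minimal Lipschitz constant, so that $M+L=1$ and $f$ is bounded and continuous; I would also note that the family of $\epsilon$ satisfying the two defining inequalities of $d_\text{Prok}$ is an up-set, so every $\epsilon>\delta$ is admissible.

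The first step is to record the identity $\int f\,d\eta=\int_{-M}^{M}\eta\{f\ge t\}\,dt-M$, valid for any Borel probability measure $\eta$ (the additive constant $-M$ being independent of $\eta$). This follows from $\int f\,d\eta=\int_0^\infty\eta\{f>t\}\,dt-\int_0^\infty\eta\{f<-t\}\,dt$, truncating both integrals at $M$, substituting $\eta\{f<-t\}=1-\eta\{f\ge -t\}$, changing variables, and using that $\{f>t\}$ and $\{f\ge t\}$ differ for at most countably many $t$. Subtracting the identity for $\eta_1$ and $\eta_2$ eliminates the constant and yields $\int f\,d\eta_1-\int f\,d\eta_2=\int_{-M}^{M}\bigl(\eta_1\{f\ge t\}-\eta_2\{f\ge t\}\bigr)\,dt$.

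The second step is the elementary geometric observation that the closed superlevel sets $C_t:=\{f\ge t\}$ satisfy $B^\epsilon(C_t)\subseteq C_{t-L\epsilon}$, since a point within $\epsilon$ of a point with $f$-value at least $t$ has $f$-value at least $t-L\epsilon$. Feeding the closed set $C_t$ into the Prokhorov inequality gives $\eta_1\{f\ge t\}\le\eta_2\{f\ge t-L\epsilon\}+\epsilon$ for every $t$. Substituting into the integral above produces $\int f\,d\eta_1-\int f\,d\eta_2\le\int_{-M}^{M}\bigl(\eta_2\{f\ge t-L\epsilon\}-\eta_2\{f\ge t\}\bigr)\,dt+2M\epsilon$, and the change of variables $s=t-L\epsilon$ collapses the main term to $\int_{-M-L\epsilon}^{-M}\eta_2\{f\ge s\}\,ds-\int_{M-L\epsilon}^{M}\eta_2\{f\ge s\}\,ds$, which is at most $L\epsilon$ (bound the first integrand by $1$, discard the nonnegative second term). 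Hence $\int f\,d\eta_1-\int f\,d\eta_2\le(L+2M)\epsilon=(1+M)\epsilon\le 2\epsilon$.

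The only genuinely delicate point is bookkeeping the constant: obtaining the factor $2$ rather than $3$ forces one to expand neighbourhoods using the \emph{exact} Lipschitz constant $L$ and to invoke the normalization $M+L=1$, rather than the cruder bound $L\le 1$. Everything else is routine—justifying the layer-cake formula, handling the harmless $\{f>t\}$ versus $\{f\ge t\}$ discrepancy, and the change of variables. Note that the only features of the ambient space that enter are that superlevel sets of continuous functions are closed and that the Prokhorov inequalities are quantified over closed sets; separability plays no essential role here beyond matching the hypothesis of the cited statement.
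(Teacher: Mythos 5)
Your argument is correct. Note first that the paper does not prove this statement at all: it is imported verbatim as Corollary 2 of Dudley's paper \cite{dudley}, so there is no internal proof to compare against. What you have supplied is a self-contained derivation, and it checks out line by line: the layer-cake identity $\int f\,d\eta=\int_{-M}^{M}\eta\{f\ge t\}\,dt-M$ is valid for bounded $f$ (with the $\{f>t\}$ versus $\{f\ge t\}$ discrepancy correctly dismissed as a countable set of $t$), the up-set observation legitimately lets you use any $\epsilon>d_\text{Prok}(\eta_1,\eta_2)$ in the defining inequalities, the inclusion $B^\epsilon(\{f\ge t\})\subseteq\{f\ge t-L\epsilon\}$ is the right way to feed superlevel sets into the Prokhorov bound, and the final bookkeeping $(L+2M)\epsilon=(1+M)\epsilon\le 2\epsilon$ is exactly where the normalization $\|f\|_\text{Lip}=M+L=1$ earns the constant $2$ (using only $L\le 1$ would give $3\epsilon$, as you note). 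This is essentially the standard proof of the easy direction of the Prokhorov--bounded-Lipschitz comparison, and it matches the inequality as stated and used in Section 4 of the paper; since only one direction of the comparison is needed there, your proof could in principle replace the citation. One cosmetic remark: by symmetry in $f\mapsto -f$ the supremum defining $d_\text{BL}$ is unchanged whether or not one takes absolute values, so bounding $\int f\,d\eta_1-\int f\,d\eta_2$ for every unit-norm $f$ does suffice, as you implicitly use.
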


To estimate the Prokhorov distance, 
we make use of a special case of a theorem of Zaitsev.  Recall that given a probability measure $\eta$ on $\R$, its Fourier transform is given by 
$\widehat{\eta}(t)=\int e^{ixt}\,d\eta(x)$.

\begin{thm}[Zaitsev~\cite{zaitsev}]\label{thm:Zaitsev}
Let $\eta_1$ and $\eta_2$ be measures on $\R$ and let $\Delta(t)$ be the difference between their Fourier transforms. If $\int|x|\,d\eta_1(x)$ and $\int|x|\,d\eta_2(x)$ are finite, then for any $M\ge e$, 
\begin{equation}
\label{eq:Zais}
d_\text{Prok}(\eta_1,\eta_2) \leq 
c_1\left(W^{c_2/\log M}\frac{\log M}M+\left(\int_{-M}^M (\Delta(t)^2+\Delta'(t)^2)\,dt\right)^{1/2}\right),
\end{equation}
where $W=2+\int|x|\,d\eta_1(x)+\int|x|\,d\eta_2(x)$ and $c_1$ and $c_2$ are universal constants.
\end{thm}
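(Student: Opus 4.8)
\textbf{The plan} is to prove~\eqref{eq:Zais} as a \emph{smoothing inequality}: convolve both $\eta_1$ and $\eta_2$ with a single well-chosen kernel scaled to bandwidth $M$, estimate the cost of this smoothing (which produces the term $W^{c_2/\log M}(\log M)/M$), and then bound the discrepancy between the two \emph{smoothed} measures using only their Fourier data on $[-M,M]$ via Plancherel's identity (which produces the integral term). Throughout, $C$, $C'$ denote absolute constants, not always the same.

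\emph{Step 1 (the kernel and the smoothing error).} Fix once and for all a probability density $K$ on $\R$ whose Fourier transform $\widehat K$ is $C^2$, is supported in $[-1,1]$, and whose tail decays almost exponentially: an Ingham--Beurling type construction provides such a $K$ with $\mathbb P(|Z|>u)\le Ce^{-cu/\log u}$ when $Z$ has law $K$. Scale it to bandwidth $M$ by $K_M(x)=MK(Mx)$, so that $\widehat{K_M}(t)=\widehat K(t/M)$ is supported in $[-M,M]$, and set $\eta_i^M=\eta_i*K_M$. If $X_i\sim\eta_i$ and $Z\sim K$ are independent, then $\eta_i^M$ is the law of $X_i+Z/M$, and coupling this against $X_i$ gives, for every $\epsilon>0$,
\[
d_\text{Prok}(\eta_i,\eta_i^M)\le\max\bigl(\epsilon,\ \mathbb P(|Z|>\epsilon M)\bigr).
\]
Optimizing $\epsilon$ against the tail of $K$ gives a bound of the order $(\log M)/M$; tracking in addition the effect of the mean $\int|x|\,d\eta_i$ on the optimal balance between the displacement error and the tail probability upgrades this to $C\,W^{c_2/\log M}(\log M)/M$ with $W=2+\int|x|\,d\eta_1+\int|x|\,d\eta_2$. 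This is where the finite-first-moment hypothesis is genuinely needed, and where the unusual exponent $c_2/\log M$ arises, from optimizing a bandwidth-versus-smoothness tradeoff against $\log W$.

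\emph{Step 2 (Plancherel for the smoothed measures).} Since $\widehat{\eta_i^M}(t)=\widehat{\eta_i}(t)\,\widehat K(t/M)$ is bounded and compactly supported, each $\eta_i^M$ has a bounded smooth $L^2$ density $p_i$; set $g=p_1-p_2$. Then $\widehat g(t)=\Delta(t)\widehat K(t/M)$ is supported in $[-M,M]$, and since $\Delta$ and $\Delta'$ are bounded (again using the first-moment hypothesis), so is $(xg)^{\wedge}(t)=i\bigl(\Delta'(t)\widehat K(t/M)+M^{-1}\Delta(t)\widehat K'(t/M)\bigr)$, also supported in $[-M,M]$. By Plancherel and $M\ge e>1$,
\[
\|g\|_{L^2}^2+\|xg\|_{L^2}^2\ \le\ C\int_{-M}^M\bigl(\Delta(t)^2+\Delta'(t)^2\bigr)\,dt .
\]
Writing $|g(x)|=(1+|x|)^{-1}\cdot(1+|x|)|g(x)|$ and applying Cauchy--Schwarz (with $\|(1+|x|)^{-1}\|_{L^2}^2=2$) gives
\[
\|g\|_{L^1}\ \le\ \sqrt2\,\bigl(\|g\|_{L^2}+\|xg\|_{L^2}\bigr)\ \le\ C'\Bigl(\int_{-M}^M\bigl(\Delta(t)^2+\Delta'(t)^2\bigr)\,dt\Bigr)^{1/2}.
\]
Hence $d_\text{Prok}(\eta_1^M,\eta_2^M)\le d_\text{TV}(\eta_1^M,\eta_2^M)=\tfrac12\|g\|_{L^1}$ is bounded by a constant times the integral term of~\eqref{eq:Zais}.

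\emph{Step 3 (combine).} Applying the triangle inequality for $d_\text{Prok}$ along the chain $\eta_1,\eta_1^M,\eta_2^M,\eta_2$ and inserting Steps 1 and 2 yields~\eqref{eq:Zais}. I expect the main obstacle to be Step 1: constructing a kernel with smooth, compactly supported Fourier transform and essentially exponential decay, and then extracting the sharp constant structure $W^{c_2/\log M}(\log M)/M$ from the optimization. This is the technical heart of Zaitsev's argument; by contrast Step 2 is soft and self-contained.
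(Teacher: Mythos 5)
First, a point of comparison you could not have known: the paper does not prove this statement at all --- it is quoted, explicitly labelled as a special case of a theorem of Za\u{\i}tsev, with \cite{zaitsev} as the source. So there is no internal proof to measure yours against; what follows is an assessment of your argument on its own terms. Your overall architecture (smooth both measures at bandwidth $M$, pay for the smoothing with the first term of~\eqref{eq:Zais}, control the smoothed difference by Plancherel to get the second term, then use the triangle inequality) is the standard Esseen-type route to such bounds, and your Steps 2 and 3 are correct as written: the identity $(xg)^{\wedge}=i(\widehat g)'$, the weighted Cauchy--Schwarz bound $\|g\|_{L^1}\le\sqrt2(\|g\|_{L^2}+\|xg\|_{L^2})$, and the inequality $d_\text{Prok}\le d_\text{TV}$ are all sound.

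The genuine gap is in Step 1, in two places. (i) The kernel you posit does not exist. A nonzero $L^2$ density $K$ whose Fourier transform is supported in $[-1,1]$ is the restriction to $\R$ of an entire function of exponential type, and for such functions the logarithmic integral criterion (Ingham) forbids any pointwise bound $K(x)\le e^{-\epsilon(|x|)}$ with $\int\epsilon(x)/(1+x^2)\,dx=\infty$; by Bernstein's inequality a tail-probability bound of the same order forces a comparable pointwise bound, and for $\epsilon(u)=cu/\log u$ the integral $\int u/((\log u)(1+u^2))\,du$ diverges. The best admissible decay is of the form $e^{-cu/(\log u)^{1+\delta}}$, and optimizing $\epsilon$ against that tail yields only $O((\log M)^{2+\delta}/M)$, which misses the stated rate. (ii) The factor $W^{c_2/\log M}$ is asserted rather than derived, and it cannot arise from the displacement-versus-tail optimization you describe, since that optimization never sees the first moments. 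Both defects can be repaired at once by dropping exact band-limitation: take $K_M$ Gaussian with variance of order $\log(WM)/M^2$. Then the Strassen coupling gives $d_\text{Prok}(\eta_i,\eta_i^M)\le C\sqrt{\log(WM)\log M}/M\le C'W^{c_2/\log M}(\log M)/M$ --- which is where the $W^{c_2/\log M}$ actually enters --- while $\widehat{K_M}$ is small enough outside $[-M,M]$ that the out-of-band contribution to $\|g\|_{L^2}+\|xg\|_{L^2}$, bounded crudely using $|\Delta|\le2$ and $|\Delta'|\le W$, is $O(1/M)$ and is absorbed into the first term of~\eqref{eq:Zais}. With that substitution your Steps 2 and 3 go through unchanged.
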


\subsection{Proof of Theorem~\ref{th:mixing}}
\label{sec:proof-of-mixing}

Fix incommensurable basic tiles $A$ and $B$.  
Recall that 
we assume that $w_A=h_B$ and $h_A=w_B$ and 
that given a tiling $x\in Y_{A,B}^\stair$, 
we can use it to define 
sequences $(a_i)$, $(b_j)$, $(c_j)$, $(d_i)$ associated 
to it, as in Notation~\ref{notation:sequences}.

Let $\lambda_1$ be the Bernoulli measure on sequences $(a_i,d_i)$
taking values in $\{1,2\}\times\{1,2\}$ with equal probability and let 
$\lambda_2$ be similarly defined on sequences $(b_j, c_j)$.  
Recall that by Lemma~\ref{lem:bijection}, 
the space of tilings is in bijection with $Q$ (see definition~\eqref{def:R}), and furthermore $Q$ is a subset of
$W_1\times W_2\times\R^2$, where each $W_i$ 
is a copy of $(\N\times\N)^\Z$.
Restricting the measure $\lambda_1\times\lambda_2\times\lambda$ to $Q$, 
we obtain a measure $\nu$.  We show that this measure $\nu$ is mixing.

We choose the natural distance function on the space of things defined 
by declaring that two tilings are $\epsilon$-close if they agree up to an 
$\epsilon$-translation on a set of diameter 1/$\epsilon$.
Let $f$ and $g$ be Lipschitz (with respect to this distance on tilings) local functions 
on $Q$ with Lipschitz norm 1, where local means that  the definitions of 
$f$ and $g$ only depend on a fixed neighborhood of the origin.
Thus, the value of $f$ (or $g$) is determined by the finite sequences 
$w_{-K}^K$, ${w'}_{-K}^K$, for some sufficiently large $K$, and by the position in the tile.
Notice also that for any $x\in Q$, $v\mapsto f(S_{\vec v}x)$ is a Lipschitz function
of $\R^2$ with Lipschitz norm 1.

Let $\mathcal F$ denote the smallest $\sigma$-algebra on $Q$ with respect to
which, $(w_1)_{(-(N+K),N+K)^c\cup [-K,K]}$, $(w_2)_{(-(N+K),N+K)^c\cup [-K,K]}$
and $z$, the $\R^2$ component, are measurable (taking $K$ and $N$ 
to be sufficiently large).

Thus,
\begin{align*}
\int f(x)g(S_{\vec v})(x)\,d\nu(x)
&=\int \E_\nu(f\cdot g\circ S_{\vec v}|\mathcal F)(x)\,d\nu(x)\\
&=\int f(x)\E_\nu(g\circ S_{\vec v}|\mathcal F)(x)\,d\nu(x).
\end{align*}
We are left with showing that for sufficiently large $\vec v$, this last integral is 
close to the product $\int f(x)\,d\nu(x)\int g(x)\,d\nu(x)$.  

Let $\Omega$ denote the information not captured 
in $\mathcal F$, meaning that 
$$\Omega=\left(\left(\{1,2\}^2\right)^{[-(N+K),-K)\cup (K,N-K]}\right)^2.
$$ For $\omega=(z,z')\in\Omega$ and $x=(w,w',y)\in Q$,
let $\omega x$ denote the point $(\tilde w,\tilde w',y)$, where
$\tilde w_i=z_i$ if $K<|i|\le N+K$ and $w_i$ otherwise, and 
similarly for $\tilde w'$.
We can then rewrite $\E_\nu(g\circ S_{\vec v}|\mathcal F)(x)$ as 
\begin{align*}
\E_\nu(g\circ S_{\vec v}|\mathcal F)(x)&=
\frac{1}{|\Omega|}\sum_{\omega\in\Omega}g(S_{\vec v}(\omega x)).
\end{align*}

Choose $r_g> 0$ such that if two tilings $x$ and $x'$ agree in 
the ball of radius $r_g$
around the origin, then $g(x)=g(x')$. 
Notice that the tilings $x$ and $\omega x$ agree completely up to translation
off the central \emph{spine} consisting of the northeast and northwest staircases 
with labels $j$ and $i$ satisfying $|i|\le N+K$ and $|j|\le N+K$. The complement of the spine 
consists of four regions (oriented approximately along the four coordinate directions).

We denote the region below the $-(N+K)$ northeast staircase  and above the $(N+K)$ northwest staircase
as region 1; above the $(N+K)$ northeast staircase and the $(N+K)$ northwest staircase as region 2; 
above the $(N+K)$ northeast staircase but below the $-(N+K)$ northwest staircase as region 3; and finally 
below the $-(N+K)$ northeast staircase and the $-(N+K)$ northwest staircase as region 4.
Note that all of these regions depend on choice of the tiling $x$.  

Let $x=(w,w',y)$ be a point in $Q$, where $w=(a_i,d_i)_{i\in\Z}$, 
$w'=(b_j,c_j)_{j\in\Z}$.  Define 
\begin{align*}
\vec U_n(x)&=\begin{cases}((a_0+\ldots+a_{n-1})w_A,
(d_0+\ldots+d_{n-1})h_B)&\text{ for $n\ge 0$};\\
((-a_{-1}-\ldots-a_{n})w_A,(-d_{-1}-\ldots-d_{n})h_B)&\text{ for $n<0$.}
\end{cases}
\\
\vec V_m(x)&=\begin{cases}
(-(c_1+\ldots+c_m)w_B,(b_0+\ldots+b_{m-1})h_A)
&\text{if $m\ge 0$}\\
((c_0+\ldots+c_{m+1})w_B,-(b_{-1}+\ldots+b_m)h_A)&\text{for $m<0$}
\end{cases}
\end{align*}

The vector $\vec U_n(x)$ is the displacement between the bottom left corner of the $(0,0)$
basic unit and the bottom left corner of the $(n,0)$ basic unit. The vector
$\vec V_m(x)$ is the displacement between the bottom left corner of the $(0,0)$ basic unit and the bottom left
corner of the $(0,m)$ basic unit. By the lattice structure, the displacement between the bottom left corner
of the $(0,0)$ basic unit and the bottom left corner of the $(n,m)$ basic unit is $\vec U_n(x)+\vec V_m(x)$.

We further define
\begin{align*}
\vec u_1(x)&=\vec U_{N+K}(x)+\vec V_{-(N+K)}(x)\\
\vec u_2(x)&=\vec U_{N+K}(x)+\vec V_{N+K}(x)\\
\vec u_3(x)&=\vec U_{-(N+K)}(x)+\vec V_{N+K}(x)\\
\vec u_4(x)&=\vec U_{-(N+K)}(x)+\vec V_{-(N+K)}(x).
\end{align*}

As a consequence, provided that $B(\vec v,r_g)$ lies in the same region (say the $i$th) for $x$ and $\omega x$,
we deduce that $g(S_{\vec v} (\omega x))=g(S_{\vec v-\vec h}x)$ where $\vec h=\vec u_i(\omega x)-
\vec u_i(x)$
is the difference in the relative translation vector of the region between $x$ and $\omega x$.

Let $\Omega_0$ be the collection of $x=(w,w',z)$ such that the number of 1's and 2's in each coordinate of $(w,w')$
in each of the ranges $(K,N+K]$ and $[-(N+K),-K)$ is within $\sqrt{2|\log\epsilon|}$ standard deviations of the mean.

Define $E_i$ to be the collection of $x\in\Omega_0$ such that 
$\vec v$ lies in the $i$th region of the
tiling corresponding to $x$, more than $100ND$ away from either of the central
 staircases where $D$ is the maximal
diameter of the basic unit.

Notice that provided $\sqrt{\|\vec v\|}>100ND/\epsilon$, $\vec v$ lies more than $100ND$
away from either of the central staircases with probability $1-O(\epsilon)$. By the 
central limit theorem and standard properties of the normal distribution, $x\in\Omega_0$
with probability $1-O(\epsilon)$. 

For $x\in E_i$, we have that $B(\vec v,r_g)$ also lies in the $i$th region of $\omega x$  and $g(S_{\vec v}(\omega x))
= g(S_{\vec v-(\vec u_i(\omega x)-\vec u_i(x))}x)$ for all $\omega\in\Omega$.

Hence for $x\in E_i$, we have
\begin{align*}
\E_\nu(g\circ S_{\vec v}|\mathcal F)(x)&=
\frac{1}{|\Omega|}\sum_{\omega\in\Omega}g(S_{\vec v+\vec u_i(\omega x)-\vec u_i(x)}x)\\
&=\int g(S_{\vec u}x)\,d\eta(\vec u),
\end{align*}
where $\eta$ is the measure on $\R^2$ given by
\begin{equation*}
\eta=\frac{1}{|\Omega|}\sum_{\omega\in\Omega} \delta_{\vec v+\vec u_i(\omega x)-\vec u_i(x)}.
\end{equation*}

We see that the $x$-coordinates on which $\eta$ is supported are 
just a translation by $(\vec v)_1$
by a weighted sum of binomial random variables. The $y$-coordinates have a 
similar description and are independent
of the $x$-coordinates.

By Lemma~\ref{lem:BLleProk}, it suffices to show that $d_{MT}(\eta,\sigma)<\epsilon$
where $\sigma$ is a measure on $\R^2$ such that for $x$'s belonging to set of measure 
close to 1, one has:
\begin{equation}\label{eq:desiderata}
\left|\int g(S_{\vec v}x)\,d\sigma(\vec v)-\int g(x)\,d\mu(x)\right|
<\epsilon
\end{equation}

We do  this in two steps, first approximating $\eta$ by a normal distribution 
and then approximating the normal distribution
by rectangular pieces with constant density. 

\subsubsection{Approximating $\eta$ by a normal distribution}
Let $\eta_1$ be the projection of $\eta$ to the $x$-coordinate, translated by a constant so that the expectation is
0. Then $\eta_1$ is the distribution of a random variable of the form 
$$
(w_A/2)(\epsilon_1+\ldots+\epsilon_N)
+(w_B/2)(\epsilon_1'+\ldots+\epsilon_N'),
$$
where the $\epsilon_i$ are independent random variables taking the 
values $\pm 1$ with probability $1/2$ each. Let $\zeta$ be the distribution of a normal random variable with mean $0$ and variance ($N/4(w_A^2+w_B^2)$).  Let $N=M^5$, where $M$ 
is to be chosen later,
and note that the quantity $W$ appearing
in~\eqref{eq:Zais} is $O(N)$. This guarantees that the first term of~\eqref{eq:Zais} is $O(\log M/M)$.

Without loss of generality, assume that $w_A < w_B$.  
For the second term in~\eqref{eq:Zais}, 
$\widehat{\eta_1}(t) =(\cos(w_At/2)\cos(w_Bt/2))^N$
and  $\widehat{\zeta}(t) = \exp(-Nt^2(w_A^2+w_B^2)/8)$.  Taking $\Delta(t)$ to be the difference of these quantities and using the Taylor expansion, we have that 
$\Delta(t)$ and $\Delta'(t)$ are $O(N^{-1/2})$  for $|t|\leq 1/w_B$.  Thus for sufficiently 
large $N$, this range gives a trivial contribution to the second term.  
For $|t|>1/w_B$, we have 
that $\widehat{\zeta}(t)$ and $\widehat{\zeta'}(t)$ are $O(Nte^{-aNt^2})$ 
for some constant $a> 0$ that is independent of $N$, and so the integral of their squares contribute
 a total of $O(e^{-aN/w^2_A})$ over the range $|t|> 1/w_B$.
We are left with controlling $\widehat{\eta_1}(t)$ and its derivative in the range $|t|> 1/w_B$.  
Since $\widehat{\eta_1}(t)=(\cos(w_At/2)\cos(w_Bt/2))^N$, the values of $t$ giving
significant
contributions are those for which both $w_At/(2\pi)$ and $w_Bt/(2\pi)$ are close to integers, which depend on the continued
fraction expansion of $w_A/w_B$. Specifically if $p_n/q_n\approx w_A/w_B$, then around $t=2\pi p_n/w_A$,
both terms are close to $\pm 1$. Letting $M=\pi p_n/w_A$, we see that the closest distance between a peak 
of $|\cos(w_At/2)|$ and one of $|\cos(w_Bt/2)|$ is $\Omega(1/M^2)$. 
The contribution when $|\cos(w_At/2)\cos(w_Bt/2)|<0.9$ is clearly negligible. 
One can check that the contribution near a peak is $O(N^{1/2}e^{-ah^2N}+
N^{3/2}h^2e^{-ah^2N})$, where $h$ is the distance between peaks of the cosine functions. 
Thus each peak contributes $O(M^{7/2}e^{-aM})$, and since there are $M$ peaks,
for sufficiently large $M$ (and thus also $N$), the total contribution is arbitrarily small.

Thus by Theorem~\ref{thm:Zaitsev}, for $M$ sufficiently large, 
$\eta_1$ lies close to a normal distribution in the Prokhorov metric, 
meaning that  $d_\text{Prok}(\eta_1,\zeta)$ can be taken to be arbitrarily small.  
By Theorem~\ref{lem:BLleProk}, the same holds for  $d_\text{BL}(\eta_1,\zeta)$.  
Since $\eta_2$ has the same distribution, we also have that 
$d_\text{BL}(\eta_1\times \eta_2,\zeta\times\zeta)$ can be taken to be arbitrarily small.

\subsubsection{Approximating by pieces with constant density}
Choose $L_0$ such that for all $L\ge L_0$, 
$|(1/L^2)\int_{[0,L)^2}g(S_{\vec v}x)\,d\lambda(\vec v)-\int g\,d\nu|<\epsilon^4$
for a set of $x$ belonging to a subset of $Q$ of $\nu$-measure at least $1-\epsilon$ (recall 
that $\lambda$ denotes Lebesgue measure).
Notice that for a (one-dimensional) normal distribution, the part of the space
at least $\sqrt{2|\log\epsilon|}$ standard deviations from the mean has measure $o(\epsilon)$. 
Dividing the part of the range that is at most $\sqrt{2|\log\epsilon|}$ standard 
deviations from the mean into pieces of length $\epsilon/|\log\epsilon|$, on each piece 
in the central region, the ratio of the maximum density to the minimum density is at most $1+\epsilon$.
Hence the distance to a distribution that is piecewise constant on intervals of length $\epsilon/|\log\epsilon|$
times the standard deviation is $O(\epsilon)$. 

Taking $M$ sufficiently large, as above, such that $\sqrt {N(w_A^2+w_B^2)/4}\epsilon/|\log\epsilon|>L_0$, we have that 
$\zeta$ is $\epsilon$-close to a measure $\sigma$ on $\R^2$ that is piecewise constant on
$L\times L$ pieces, where $L=\sqrt{N(w_A^2+w_B^2)/4}\epsilon/|\log\epsilon|$, and for which 
$1-\epsilon$ of the mass is concentrated on $|\log\epsilon|^4/\epsilon^2$ pieces. 
Given $x\in E_i$ and $\vec v$, the probability that all 
$|\log\epsilon|^4/\epsilon^2$ $L\times L$ tiles around
$\vec v$ in $x$ are good is at least $1-\epsilon$, ensuring that $\int g(S_{\vec v-\vec h_i(x)+\vec u}x)\,d\sigma(\vec u)$
is $\epsilon$-close to $\int g\,d\nu$ on a set of large measure.  Thus $\E_\nu(g\circ S_{\vec v})$ is $\epsilon$-close
to $\int g\,d\nu$ on a set of large measure, 
completing the proof of~\eqref{eq:desiderata} and 
the proof of theorem.

\end{document}